\newtheorem{theo}{Theorem}[section]
\newtheorem{prop}[theo]{Proposition}
\newtheorem{lemm}[theo]{Lemma}
\newtheorem{cor}[theo]{Corollary}
\newtheorem{prob}[theo]{Problem}
\newtheorem{conj}[theo]{Conjecture}
\numberwithin{equation}{section}
\theoremstyle{definition}
\newtheorem{defi}[theo]{Definition}
\newtheorem{set}[theo]{Setting}
\newtheorem{ex}[theo]{Example}
\theoremstyle{remark}
\newtheorem{rem}[theo]{Remark}
\newcommand{\supp}[0]{\operatorname{Supp}}
\newcommand{\eend}[0]{\operatorname{End}}
\newcommand{\codim}[0]{\operatorname{codim}}
\newcommand{\nd}[0]{\operatorname{nd}}
\newcommand{\rank}[0]{\operatorname{rank}}
\newcommand{\degree}[0]{\operatorname{deg}}
\newcommand{\pr}{{\rm pr}}
\newcommand{\Sym}{{\rm Sym}}
\newcommand{\verti}{{\rm vert}}
\newcommand{\hor}{{\rm hor}}
\newcommand\sO{{\mathcal O}}
\newcommand{\ddbar}{dd^c}
\newcommand{\cV}{\mathcal{V}}
\newcommand{\dbar}{\overline{\partial}}
\newcommand{\I}[1]{\mathcal{I}(#1)}
\newcommand{\holom}[3]{\ensuremath{#1:#2  \rightarrow #3}}
\newcommand{\fiber }[2]{\ensuremath{#1^{-1} (#2)}}
\newcommand{\ep}{\mathbb \varepsilon}
\begin{document}

\title[Projective klt pairs with nef anti-canonical divisor]
{Projective klt pairs with nef anti-canonical divisor}

\author{Fr\'ed\'eric CAMPANA}
\author{Junyan CAO}
\author{Shin-ichi MATSUMURA}

\address{Universit\'e de Lorraine\\  
  Institut de Math\'ematiques \'Elie Cartan, \\
  B.P. 70239, 54506 Vandoeuvre-les-Nancy  Cedex, France}
\email{frederic.campana@univ-lorraine.fr}

\address{Sorbonne Universit\'{e} - Campus Pierre et Marie Curie,
Institut de Math\'{e}matiques de Jussieu, 4, Place Jussieu,  75252 Paris Cedex 05, France}
\email{{\tt junyan.cao@imj-prg.fr}}

\address{Mathematical Institute, Tohoku University, 
6-3, Aramaki Aza-Aoba, Aoba-ku, Sendai 980-8578, Japan.}
\email{{\tt mshinichi-math@tohoku.ac.jp, mshinichi0@gmail.com}}

\date{\today, version 0.01}

\renewcommand{\subjclassname}{%
\textup{2010} Mathematics Subject Classification}
\subjclass[2010]{Primary 14D06, Secondary 14E30, 32J25}

\keywords
{KLT pairs, 
Nef anti-canonical divisors, 
Rationally connectedness, 
MRC fibrations, 
Positivity of direct image sheaves, 
Singular hermitian metrics, 
Numerical dimension, 
Numerically flatness, 
slope rationally connected quotients.}

\maketitle

\begin{abstract}
In this paper, we study a projective klt pair $(X, \Delta)$ 
with the nef anti-log canonical divisor $-(K_X+\Delta)$ and 
its maximally rationally connected fibration $\psi: X \dashrightarrow Y$. 
We prove that the numerical dimension of the anti-log canonical divisor 
$-(K_X+\Delta)$ on $X$ coincides with that of the anti-log canonical divisor 
$-(K_{X_y}+\Delta_{X_y})$ on a general fiber $X_y$  of 
$\psi: X \dashrightarrow Y$, 
which is an analogue of  Ejiri-Gongyo's result formulated for the Kodaira dimension. 
As a corollary, we reveal a relation between positivity of the anti-canonical divisor and the rationally connectedness, 
which gives a sharper estimate than the question posed by Hacon-$\mathrm{M^{c}}$Kernan. 
Moreover, in the case of $X$ being smooth, 
we show that 
a maximally rationally connected fibration $\psi: X \to Y$ 
can be chosen to be a morphism to 
a smooth projective variety $Y$ with numerically trivial canonical divisor, 
and further that it is locally trivial with respect to the pair $(X, \Delta)$, 
which can be seen as a generalization of  Cao-H\"oring's structure theorem
to klt pair cases. Finally, we study the structure of the slope rationally connected quotient 
for a pair $(X, \Delta)$ with $-(K_X +\Delta)$ nef,
and obtain a structure theorem for projective orbifold surfaces.
\end{abstract}

\tableofcontents

\section{Introduction}\label{Sec1}

The study of certain \lq \lq positively curved" varieties, 
which are often formulated with positivity of bisectional curvatures, tangent bundles, or anti-canonical divisors, 
occupies an important place in the theory of classification of varieties. 
One of the central problems in this study is 
to determine the structure of semi-positively curved varieties 
in terms of naturally associated fibrations such as Albanese maps, Iitaka fibrations, or maximally rationally connected fibrations. 
Particularly, the study of compact K\"ahler manifolds with nef anti-canonical divisor, 
which has been rapidly developed in the last decades, 
covers a large range of positively curved varieties, and thus it is an attractive topic.

Cao-H\"oring recently established a structure theorem 
for a smooth projective variety $X$ with nef anti-canonical divisor in \cite{CH}: 
$X$ admits a locally trivial fibration $X \to Y$ 
to a smooth projective variety $Y$ with numerically trivial canonical divisor 
such that the fiber is rationally connected. 
By combining the Beauville-Bogomolov decomposition (see \cite{Bea83}), 
it is also shown that the universal cover  of $X$ can be decomposed 
into the product of $\mathbb{C}^m$, Calabi-Yau manifolds, hyperk\"ahler manifolds, and rationally connected manifolds. 
For various previous researches, 
see \cite{CP91, DPS93, DPS94, DPS96, Zha96, Pau97, CDP15, Cao, CH}, and references therein.

This paper is devoted to the study of the structure of a projective klt pair $(X, \Delta)$ 
with the nef anti-log canonical divisor $-(K_X +\Delta)$. 
We study its maximally rationally connected (\lq \lq MRC" for short) fibration 
and its slope rationally connected (\lq \lq sRC" for short)  quotient  introduced by Campana in \cite{Cam16}.
This paper contains three main results: 
The first main result solves 
the question posed by Hacon-$\mathrm{M^{c}}$Kernan in \cite{HM}  
in a more general form (see Theorem \ref{thm-main}). 
The second main result establishes a generalization of Cao-H\"oring's structure theorem 
to projective klt pairs  (see Theorem \ref{thm-maincam}). 
The third main result gives a structure theorem of sRC quotients 
for a projective surface $(X, \Delta)$ with the nef anti-log canonical divisor $-(K_X +\Delta)$ 
(see Theorem \ref{thm-sRC-intro}).

\subsection{Positivity of anti-canonical divisors --Hacon-$\mathrm{M^{c}}$Kernan's question--}

The following question posed by Hacon-$\mathrm{M^{c}}$Kernan asks a relation 
between the rationally connectedness of $X$ and positivity of $-(K_X +\Delta)$. 
This question can be seen as a generalization of the well-known fact 
that any weak Fano varieties are rationally connected (see \cite{Cam92} and \cite{KoMM92}). 
Indeed, the condition of $ \dim Y=0$ is equivalent to the rationally connectedness of $X$, 
and thus the right hand side in the question measures how far $X$  is from rationally connected varieties. 

\begin{prob}[{Hacon-$\mathrm{M^{c}}$Kernan's question, \cite[Question 3.1]{HM}}]\label{prob-HM}
Let $(X, \Delta)$ be a projective klt pair with the nef anti-log canonical divisor $-(K_X +\Delta)$ 
and let $\psi: X \dashrightarrow Y$ be an MRC fibration of $X$. 
Then does the following inequality hold?
\begin{align*}
\kappa (-(K_X+\Delta)) \leq \dim X - \dim Y,  
\end{align*}
where $\kappa(\bullet)$ is the Kodaira dimension. 
\end{prob}

Ejiri-Gongyo affirmatively solved the above question in \cite{EG} by proving the following inequality:
\begin{align*}
\kappa (-(K_X+\Delta)) \leq \kappa (-(K_{X_y}+\Delta_{{X_y}})), 
\end{align*}
where  $X_y$ is a general fiber of the MRC fibration $\psi: X \dashrightarrow Y$ 
and $\Delta_{_{X_y}}$ is the effective divisor defined by the restriction 
$\Delta_{_{X_y}}:=\Delta |_{X_y}$ to $X_y$. 
In this paper, we prove an analogue of Ejiri-Gongyo's result 
by replacing the Kodaira dimension  with the numerical dimension. 
Moreover, as an application, 
we resolve Hacon-$\mathrm{M^{c}}$Kernan's question in the following generalized form:


\begin{theo}\label{thm-main}
Let $(X, \Delta)$ be a projective klt pair with the nef anti-log canonical divisor $-(K_X +\Delta)$ 
and let $\psi: X \dashrightarrow Y$ be an MRC fibration of $X$. 
Then the following equality  holds\,$:$ 
$$
\nd (-(K_X+\Delta))=\nd (-(K_{X_y}+\Delta_{_{X_y}}))
$$ 
for a general fiber $X_y$ of $\psi$. 
Here $\nd (\bullet)$ is the numerical dimension and  
$-(K_{X_y}+\Delta_{{X_y}})$ is the anti-log canonical divisor on a general fiber $X_y$.  

In particular, we obtain a generalization of Hacon-$M^c$Kernan's question\,$:$  
$$
\nd (-(K_X+\Delta)) \leq \dim X - \dim Y. 
$$ 
\end{theo}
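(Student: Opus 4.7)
\smallskip

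\noindent\textbf{Proof plan.} The statement decomposes into the two inequalities $\nd(-(K_X+\Delta))\geq\nd(-(K_{X_y}+\Delta_{X_y}))$ and its reverse; the ``in particular'' bound is immediate from the reverse one, since $\dim X_y=\dim X-\dim Y$ and any numerical dimension is bounded by the dimension of the ambient variety. The routine direction $\nd(-(K_X+\Delta))\geq\nd(-(K_{X_y}+\Delta_{X_y}))$ is handled by first taking a common resolution $\mu:\tilde X\to X$ on which the MRC map becomes a morphism $\tilde\psi:\tilde X\to Y$ with $Y$ smooth, and writing $K_{\tilde X}+\tilde\Delta=\mu^*(K_X+\Delta)+E$ with $\tilde\Delta$ effective klt and $E$ effective $\mu$-exceptional. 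For very general $y\in Y$ the fibre $F:=\tilde\psi^{-1}(y)$ is smooth, disjoint from the exceptional locus, and mapped birationally onto $X_y$, so the restriction of $\mu^*(-(K_X+\Delta))$ to $F$ is numerically equivalent to $-(K_{X_y}+\Delta_{X_y})$; since $L:=\mu^*(-(K_X+\Delta))$ is nef, $(L|_F)^k=L^k|_F$ yields $\nd(L|_F)\leq\nd(L)$.

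\smallskip

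The substantive direction is the reverse inequality $\nd(-(K_X+\Delta))\leq\nd(-(K_{X_y}+\Delta_{X_y}))$. The plan is to combine two pieces of positivity. First, the non-uniruledness of $Y$ (Graber--Harris--Starr) together with BDPP forces $K_Y$ to be pseudo-effective. Second, the Cao--P\u{a}un / Berndtsson--P\u{a}un positivity of direct images, in the klt version adapted to $(\tilde X,\tilde\Delta)$, produces a singular hermitian metric with semipositive curvature on the reflexive pushforward of the appropriate multiple of $-(K_{\tilde X/Y}+\tilde\Delta)$, equivalently a positive current in the numerical class of $-(K_{\tilde X/Y}+\tilde\Delta)$ whose fibrewise behaviour recovers $-(K_{X_y}+\Delta_{X_y})$. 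Using
\[
-(K_{\tilde X}+\tilde\Delta)\equiv -(K_{\tilde X/Y}+\tilde\Delta)-\tilde\psi^*K_Y,
\]
the nefness of the left-hand side, combined with pseudo-effectivity of $K_Y$ and the semipositive current above, is used to show that $K_Y\equiv 0$ and that the vertical positive contribution of $-(K_{\tilde X}+\tilde\Delta)$ is numerically trivial.

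\smallskip

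Once the vertical contribution is controlled, the numerical dimension $\nd(-(K_X+\Delta))$ can be read off by fibre integration: intersections with pullbacks of a K\"ahler class on $Y$ and with K\"ahler classes relative to $\tilde\psi$ reduce $\nd$ on $\tilde X$ to the fibrewise numerical dimension $\nd(-(K_{X_y}+\Delta_{X_y}))$. The main obstacle is the extraction of this ``vertical triviality'' from the positivity of the direct image in the klt setting: whereas Ejiri--Gongyo argued at the level of pluricanonical sections, the numerical-dimension statement requires working with singular hermitian metrics on reflexive pushforwards, tracking multiplier ideals, and handling the discrepancies $E$ and $\tilde\Delta$ introduced by resolving the indeterminacy of the MRC map. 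The heart of the argument is the assertion that the semipositive curvature of the direct image, forced to balance the pseudo-effective $K_Y$ against the nef $-(K_{\tilde X}+\tilde\Delta)$, leaves no room for vertical positivity beyond what already lives on the general fibre.
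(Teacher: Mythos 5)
Your forward inequality $\nd(-(K_X+\Delta))\geq\nd(-(K_{X_y}+\Delta_{X_y}))$ is correct, and your intersection-theoretic argument (pulling back to a resolution where $\psi$ becomes a morphism and using the projection formula against powers of $\tilde\psi^\star(\text{ample on }Y)$) is in fact more elementary than the proof in the paper, which obtains this inequality from an Ohsawa--Takegoshi-type extension theorem (Proposition~\ref{prop-ext}, via \cite{CDM17}). One small slip: a very general fibre $F$ of $\tilde\psi$ is \emph{not} disjoint from the $\mu$-exceptional locus when $X$ is smooth; but since $\mu|_F\colon F\to X_y$ is still a birational morphism and $\nd$ of a nef class is invariant under birational pullback, your conclusion goes through.

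The reverse inequality is the substance of the theorem, and here your plan has a wrong step and otherwise only names the difficulty without resolving it. First, nothing at this stage forces $K_Y\equiv 0$: Zhang's theorem \cite{Zha05} gives only $\kappa(Y)=0$, together with the finer fact used in Remark~\ref{rem-set} that an effective $N\sim_{\Q}K_Y$ can be chosen with $\varphi^\star N$ contracted by $\pi$. Obtaining a numerically trivial base is exactly the content of the much deeper Theorem~\ref{thm-maincam}, proved later via the full structure theorem; inferring it here is circular. Second, ``reading off $\nd$ by fibre integration'' is an assertion, not an argument. The actual passage from direct-image positivity to a bound on $\nd(-(K_X+\Delta))$ occupies all of Section~\ref{Sec3}: the paper constructs $\cV_{m,1}=\varphi_\star\!\bigl(-m(K_{\Gamma/Y}+D)+c_{m,1}E''+\widetilde A\bigr)$, proves it is weakly positively curved with $\pi_\star\varphi^\star c_1(\cV_{m,1})=0$ (Proposition~\ref{prop-fun5}, which needs Theorem~\ref{prop-fun} and the exceptionality of $\varphi^\star N$), concludes that $\varphi^\star\cV_{m,1}$ is numerically flat on a general complete-intersection curve, and only then applies the Fujita-type rank bound Proposition~\ref{prop-rank} to get $h^0\leq C\cdot\rank\cV_{m,1}=O\!\left(m^{\nd(-(K_{X_y}+\Delta_{X_y}))}\right)$. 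Your sentence ``the semipositive curvature of the direct image \ldots leaves no room for vertical positivity'' is precisely what these tools establish; without the near-flatness $\pi_\star\varphi^\star c_1(\cV_{m,1})=0$, the direct image could carry horizontal positivity contributing an extra $O(m^{\dim Y})$ to $h^0$. Finally, Berndtsson--P\u{a}un does not directly produce a positively curved metric on $\psi_\star(-m(K_{X/Y}+\Delta))$; one must first rewrite the bundle in the form $m'K_{\Gamma/Y}+(\text{psef twist})$ after adding a $\pi$-exceptional divisor $cE''$ (Lemma~\ref{denew}) and a carefully normalised $\varphi$-ample $\widetilde A$ with $\pi_\star\varphi^\star c_1(\varphi_\star\widetilde A)=0$ (Proposition~\ref{prop-fun3}).
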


We emphasize that there are several advantages in considering the numerical dimension instead of the Kodaira dimension. 
For example, it follows that the inequality in Theorem \ref{thm-main} gives a sharper inequality 
than Hacon-$\mathrm{M^{c}}$Kernan's question proved by  Ejiri-Gongyo 
from the well-known formula $\kappa (\bullet) \leq  \nd (\bullet)$. 
The interesting point here is that the equality in Theorem \ref{thm-main} does not hold   
without considering the numerical dimension. 
Indeed, there is a counterexample to the equality for the Kodaira dimension 
although we have Ejiri-Gongyo's inequality (see Example \ref{exa}).

The inequality of $\nd (-(K_X+\Delta)) \geq \nd (-(K_{X_y}+\Delta_{_{X_y}}))$ in Theorem \ref{thm-main} 
is proved by an extension theorem of holomorphic sections 
from a general fiber $X_y$ to the ambient space $X$. 
On the other hand, the converse inequality is obtained from 
considering the direct image sheaf of pluri-anti-canonical divisors $-m(K_{X/Y}+\Delta)$. 
The key points in this step are to obtain certain flatness 
and to estimate the dimension of the space of holomorphic sections of the direct image sheaf.

\subsection{Structure theorem of MRC fibrations for klt pairs}

From the viewpoint of birational geometry, 
it is natural to ask whether or not Cao-H\"oring's structure theorem 
can be extended to log pairs. 
This paper answers this question in a quite general form 
(see Theorem \ref{splitgener}), which is one of contributions of this paper. 
Moreover, by considering the case where $X$ being smooth in detail, 
we generalize Cao-H\"oring's structure theorem 
to the klt pairs as follows:

\begin{theo}\label{thm-maincam}
Let $(X, \Delta)$ be a projective klt pair such that  $X$ is smooth and 
the anti-log canonical divisor $-(K_X +\Delta)$ is nef. 
Then there exists an MRC fibration $\psi: X \rightarrow Y$ with the following properties$:$
\begin{enumerate}
\item[(1)] $Y$ is smooth and $c_1 (Y)=0$. 
\item[(2)] $\psi$ is locally trivial with respect to $(X, \Delta)$, 
namely,  for any small open set $U \subset Y$, we have the isomorphism
$$(\psi^{-1} (U), \Delta )\cong  U \times (X_y, \Delta_{X_y})$$
over $U \subset Y$.
Here $X_y$ is a general fiber of $\psi$.
\end{enumerate} 

Moreover, together with the Beauville-Bogomolov decomposition, 
the universal cover $X_{\rm univ}$ of $X$ can be decomposed into 
the product of $\mathbb{C}^m$, Calabi-Yau manifolds, hyperk\"ahler manifolds, and rationally connected manifolds. 
\end{theo}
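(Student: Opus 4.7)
The plan is to deduce Theorem~\ref{thm-maincam} from the general structure result Theorem~\ref{splitgener} (already announced for klt pairs) combined with Theorem~\ref{thm-main} and the techniques of Cao-H\"oring \cite{CH}, taking advantage of the assumption that $X$ is smooth.

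First, I would apply Theorem~\ref{splitgener} to produce an MRC fibration $\psi:X\dashrightarrow Y$ together with positivity and splitting information for the direct images $\psi_*(-m(K_{X/Y}+\Delta))$. Using Theorem~\ref{thm-main}, the numerical dimension of $-(K_X+\Delta)$ coincides with that of the general fiber. Writing the canonical bundle formula for klt-trivial fibrations
\[
K_X+\Delta\sim_{\Q}\psi^*(K_Y+B_Y+M_Y),
\]
with $M_Y$ numerically trivial by rational connectedness of the fibers, the equality of numerical dimensions forces $K_Y+B_Y\equiv 0$, hence $B_Y=0$ and $K_Y\equiv 0$. Combined with numerical flatness of the relevant direct image bundles on $Y$, this lets me upgrade $\psi$ to a morphism to a smooth $Y$ with $c_1(Y)=0$: equidimensionality and the smoothness of $Y$ follow along the lines of \cite{CH} from the absence of ramification plus the numerical triviality of $K_Y$.

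Next, I would establish local triviality for the pair $(X,\Delta)$. Local triviality of $X\to Y$ as a morphism of varieties would follow from the Cao-H\"oring argument adapted to the twisted setting: one shows that the Kodaira-Spencer map into $H^1(X_y,T_{X_y})$ vanishes by using the numerical flatness of $\psi_*(-m(K_{X/Y}+\Delta))$ together with singular Hermitian metric techniques already developed earlier in the paper. The upgrade to the pair requires verifying that the trivialization $\psi^{-1}(U)\cong U\times X_y$ carries $\Delta$ to $\pr_2^*\Delta_{X_y}$. This I would achieve by tracking $\Delta$ through the parallel transport induced by the flat structure on $\psi_*(-m(K_{X/Y}+\Delta))$: any vertical component of $\Delta$ would produce a defect in the flat metric contradicting numerical flatness, and the horizontal components must sit as products because the sections of the anti-log-pluri-canonical bundle defining them are flat with respect to the Gauss-Manin-type connection.

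Finally, the decomposition of the universal cover follows by applying the Beauville-Bogomolov decomposition \cite{Bea83} to the smooth projective $Y$ with $c_1(Y)=0$, and then using local triviality of $\psi$ to write $X_{\rm univ}$ as a product of the universal cover of $Y$ (which itself splits into $\mathbb{C}^m$, Calabi-Yau, and hyperk\"ahler factors) with the rationally connected fiber $X_y$. The main obstacle will be the step upgrading local triviality from the variety level to the pair level, that is, controlling how $\Delta$ varies over $Y$; this is the new ingredient beyond \cite{CH} and requires one to show both that $\Delta$ has no vertical components and that its horizontal components are compatible with the product structure coming from the flat connection on the direct image sheaves.
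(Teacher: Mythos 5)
Your overall architecture---apply Theorem~\ref{splitgener}, deduce $c_1(Y)=0$, prove local triviality at the pair level, conclude via Beauville--Bogomolov---matches the paper's, but two of your intermediate steps are either wrong or too vague to work.

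The most serious problem is your invocation of the canonical bundle formula for klt-trivial fibrations, $K_X+\Delta\sim_{\Q}\psi^{\star}(K_Y+B_Y+M_Y)$. This formula requires $K_X+\Delta$ to be $\Q$-linearly trivial (or at least numerically trivial) on the general fiber $X_y$. In the present setting the general fiber of the MRC fibration is rationally connected and $-(K_{X_y}+\Delta_{X_y})$ is nef but by no means trivial (for instance, if $X$ is Fano and $\Delta=0$, then $Y$ is a point and the general ``fiber'' is $X$ itself with $K_X$ ample negative). So the formula you wrote simply does not hold, and the chain of deductions $M_Y\equiv 0 \Rightarrow K_Y+B_Y\equiv 0 \Rightarrow B_Y=0, K_Y\equiv 0$ is not justified. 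The paper reaches $c_1(Y)=0$ and $\Delta^{\verti}=0$ by a much more elementary observation: writing $-\Delta^{\verti}-K_Y=K_{X/Y}-(K_X+\Delta)+\Delta^{\hor}$ and using \cite{BP08} (since the restriction of the right-hand side to a general fiber is effective) to conclude that $-\Delta^{\verti}-K_Y$ is pseudo-effective; combined with pseudo-effectivity of $K_Y$ (as $Y$ is non-uniruled), this forces $\Delta^{\verti}=0$ and $K_Y\equiv 0$.

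Your pair-level local triviality step is also underspecified. Invoking a Kodaira--Spencer vanishing for the morphism $\psi$ is redundant once Theorem~\ref{splitgener} already gives local triviality of the fibration; the genuinely new content is showing that the trivialization respects $\Delta$. Your heuristic that horizontal components ``sit as products because the sections defining them are flat with respect to the Gauss--Manin-type connection'' captures the right idea, but the mechanism is missing. The paper implements it quantitatively: it shows $\psi_{\star}(p\Delta+q\widetilde{A})$ is weakly positively curved (Lemma~\ref{withpairlem}), proves its first Chern class vanishes (using Lemma~\ref{prop-fun2} and Theorem~\ref{thm-cur} to compare with $\psi_{\star}(q\widetilde{A})$, whose $c_1$ was already shown to vanish), hence the sheaf is numerically flat by Proposition~\ref{propnumflat}, and then applies the concrete criterion Proposition~\ref{isotri} (numerical flatness of $p_{\star}(mL)$ and $p_{\star}(mL\otimes\mathcal{I}_\Delta)$ implies local triviality of the pair). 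You need some concrete substitute for Proposition~\ref{isotri} to close this gap.

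The final Beauville--Bogomolov step is fine and identical to the paper's.
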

The proof of Theorem \ref{splitgener} and Theorem \ref{thm-maincam} 
gives a generalization of the argument in \cite{CH} 
to the singularities of pairs, and also it contains several new ingredients 
(for example, Proposition \ref{propnumflat}, the argument of Theorem \ref{thm-maincam}, subsection \ref{subsec-4.3}).  
It is worth to mention that our proof  
provides not only a generalization but also a \lq \lq simplified and unified" version of \cite{Cao} and \cite{CH}. 


\subsection{Structure theorem of sRC quotients for orbifolds}

The last part of this paper is devoted to the study of the structure of 
the slope rationally connected quotient (\lq \lq sRC quotient" for short). 
The sRC quotient, which was established in \cite{Cam16}, is a natural fibration associated  to a pair $(X, D)$. 
The idea is that, in the case with the additional information of pairs $D$, the sRC quotient is more precise than the MRC fibration.
Let us first recall the definition of sRC manifolds, following \cite{Cam16}.
Let $(X, D)$ be a log canonical pair (\lq \lq lc pair" for short)  with $X$ smooth. 
We say that $(X, D)$ is {\textit{sRC}}, if for any ample line bundle $A$ on $X$ and 
some positive integer $m_A$,
we have 
$$
h^0 (X', \pi^\star  (\otimes^m (\Omega^1 (X,D))) \otimes \pi^\star A)=0 \qquad\text{for any } m \geq m_A,
$$
where $\pi: X' \rightarrow (X,D)$ is some (or any) Kawamata cover adapted to $(X, D)$.  
We refer to \cite{Cam16} for a detailed discussion of sRC manifolds. 
The following theorem has been proved in \cite{Cam16}:

\begin{theo}[{\cite[Theorem 1.5, Corollary 10.6]{Cam16}}] \label{thm-cam}
Let $(X, D)$ be an lc pair with $X$ smooth. 
Then there exists an orbifold birational model $(X', D')$ 
and an orbifold morphism $\rho : (X', D')\rightarrow (R, D_R)$ onto its $($smooth$)$ orbifold base $(R, D_R)$ 
with the following two properties:

\begin{enumerate}
\item  The smooth orbifold fibers $(X_r, D_r )$ are sRC.

\item The divisor $K_R + D_R$ is pseudo-effective.
\end{enumerate}

Such a fibration is uniquely determined, up to orbifold birational equivalence.
Moreover, in the case $-(K_X +D)$ is nef, the orbifold base $(R, D_R)$ satisfies that 
$$\kappa (K_R +D_R) = \nd (K_R +D_R)=0.$$
\end{theo}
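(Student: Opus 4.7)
The plan is to adapt the construction of the maximally rationally connected fibration to the orbifold/pair setting, and to deduce the positivity of the base from a maximality argument. First, I would define an equivalence relation on $X$ in which two very general points are sRC-equivalent if they can be joined by a chain of sRC orbifold subvarieties of $(X,D)$. By the Campana--Koll\'ar--Miyaoka--Mori quotient procedure, extended to the orbifold context (one needs that the composition of two sRC orbifold morphisms is again sRC, which requires a vanishing statement for $H^0$ of symmetric powers of $\Omega^1$ on Kawamata covers adapted to a tower of orbifolds), this produces an almost holomorphic map $\rho:X\dashrightarrow R$. Passing to a suitable orbifold birational model $(X',D')$ resolves $\rho$ into a genuine morphism to a smooth orbifold base $(R,D_R)$; by construction the general orbifold fiber $(X_r,D_r)$ is sRC, giving property (1).

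The central step, and where I expect the main obstacle to lie, is the pseudo-effectivity of $K_R+D_R$, which I would prove by contradiction. If $K_R+D_R$ were not pseudo-effective, then by a BDPP-type characterization adapted to the orbifold category, $(R,D_R)$ would admit a covering family of orbifold rational curves, or more generally a non-trivial orbifold fibration whose orbifold base has strictly smaller dimension and whose general orbifold fibers are sRC. Lifting this structure to $X'$ and concatenating the fiberwise sRC chains over $R$ with the new sRC chains on the base would enlarge the sRC equivalence class on $X'$, contradicting the maximality of $\rho$. The technical heart is realising bend-and-break, or equivalently BDPP, in the orbifold setting for the divisor $K_R+D_R$, and verifying that iterated sRC orbifold fibrations remain sRC; this is exactly the place where the orbifold formalism has to do genuine work beyond the smooth MRC picture.

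Uniqueness up to orbifold birational equivalence then follows formally: any other fibration with sRC orbifold fibers and pseudo-effective orbifold base has its general fibers contained in the sRC equivalence class through a very general point, and pseudo-effectivity of its base prevents its equivalence class from being strictly smaller, forcing equality after orbifold birational modification.

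For the moreover assertion, assume $-(K_X+D)$ is nef. Using an orbifold canonical bundle formula, write, up to orbifold birational adjustment, $K_{X'}+D'\sim_{\Q}(K_{X'/R}+D'-\rho^{\ast}D_R)_{\mathrm{orb}}+\rho^{\ast}(K_R+D_R)+M$, where the moduli contribution $M$ is pseudo-effective. Since $-(K_{X'}+D')$ is nef and the relative part is pseudo-effective on the sRC orbifold fibers, the positivity properties of the direct images of multiples of $-(K_{X'/R}+D')$, together with an argument parallel to Theorem \ref{thm-main} applied to $\rho$, force $\rho^{\ast}(K_R+D_R)$ to have numerical dimension zero. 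Combined with the pseudo-effectivity from the previous step, this yields $\kappa(K_R+D_R)=\nd(K_R+D_R)=0$, completing the proof.
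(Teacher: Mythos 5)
Your proposal takes a ``bottom-up'' route: define an sRC chain-connectivity equivalence relation, take its quotient \`a la Campana--Koll\'ar--Miyaoka--Mori, and then establish pseudo-effectivity of $K_R+D_R$ by an orbifold BDPP/bend-and-break argument. This is genuinely different from what Campana actually does in \cite{Cam16}, and two of your three steps rest on statements that are not available.

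First, the sRC property is \emph{not defined} by chains of orbifold rational curves (or of sRC orbifold subvarieties); it is a cohomological vanishing on Kawamata covers, $h^0(X',\pi^\star(\otimes^m\Omega^1(X,D))\otimes\pi^\star A)=0$. There is no a priori notion of two points being ``sRC-connected'' from which an equivalence quotient could be formed, and indeed establishing that sRC has such a geometric, chain-theoretic description is essentially equivalent to the theorem you are trying to prove, not an input to it. Second, and more seriously, the step ``$K_R+D_R$ not pseudo-effective $\Rightarrow$ $(R,D_R)$ is covered by orbifold rational curves / has a nontrivial sRC factor'' requires an orbifold version of bend-and-break or of BDPP, which is not known in the orbifold category with rational multiplicities; you flag this yourself as ``the technical heart,'' but it is not a technical verification --- it is an open problem, and your argument collapses without it. Campana's proof avoids this entirely: it is ``top-down,'' built on the orbifold cotangent bundle $\Omega^1(X,D)$, using the theory of foliations with positive slopes and birational stability of orbifold cotangent bundles (the Campana--P\u{a}un results cited as \cite{CP19} and \cite{Cam04} in this paper) to integrate the positive part of the Harder--Narasimhan filtration into a fibration whose base has pseudo-effective $K_R+D_R$. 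That route replaces bend-and-break by algebraicity of positive-slope foliations, which \emph{is} available. Third, the ``tower property'' --- that the composite of an sRC orbifold fibration over an sRC base is again sRC --- is nontrivial precisely because of the orbifold divisor accumulated on intermediate bases; Campana controls this through the cotangent sheaf, not through chain concatenation.

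Finally, for the ``moreover'' assertion you invoke an argument parallel to Theorem \ref{thm-main}, but Theorem \ref{thm-main} in this paper is about the MRC fibration of the ambient space $X$, not about $\rho$ relative to a nontrivial orbifold divisor $D_R$ on the base; the two situations differ exactly because $D_R$ can be nonzero even when $X$ is rationally connected, and the direct-image techniques used for Theorem \ref{thm-main} would have to be re-established in the orbifold relative setting. Campana obtains $\kappa(K_R+D_R)=\nd(K_R+D_R)=0$ instead from the relative version of \cite{Cam04} together with the pseudo-effectivity already established, so this part too follows a different track.
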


We remark that an orbifold pair $(X, D)$ might not be sRC in the case where $D$ is a nontrivial divisor 
even if $X$ itself is rational connected. 
Hence the sRC quotient of $(X, D)$ might not be trivial even if the MRC fibration of $X$ is trivial. 
This implies that we can extract more information from the sRC quotients than only MRC fibrations. 
It is natural to ask whether or not we can expect  structure theorems for sRC quotients 
in the same spirit  as Theorem \ref{thm-maincam}. 
In this paper, we pose the following conjecture for sRC quotients. 

\begin{conj}\label{mainconsRC}
Let $(X, D)$ be a klt pair such that $X$ is smooth and $- (K_X +D)$ is nef. 
Then there exists an orbifold morphism $\rho : (X, D)\rightarrow (R, D_R)$ with the following properties$:$
\begin{enumerate}
\item  $(R, D_R)$ is a klt pair such that $R$ is smooth and $c_1(K_R+ D_R)=0$.

\item  General orbifold fibers $(X_r, D_r )$ are sRC.

\item The fibration is locally trivial with respect to pairs, namely, for any small open set $U \subset R$, we have the isomorphism
$$
(\rho^{-1} (U), D)\cong  (U, D_R |_U) \times (X_r, D_{X_r})
$$
over $U \subset Y$. 
Here $X_r$ is a general fiber of $\rho$.

\end{enumerate}
\end{conj}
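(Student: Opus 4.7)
The strategy is to reduce the conjecture to the smooth absolute case settled in Theorem \ref{thm-maincam} via a Kawamata cover adapted to $(X, D)$, and then to descend the resulting structure through the Galois action. A candidate base $(R_0, D_{R_0})$ is already provided by Theorem \ref{thm-cam}: applied to $(X, D)$ under the hypothesis that $-(K_X + D)$ is nef, it produces an sRC quotient $\rho_0 : (X', D') \to (R_0, D_{R_0})$ on some orbifold birational model of $(X, D)$ together with the equalities $\kappa(K_{R_0} + D_{R_0}) = \nd(K_{R_0} + D_{R_0}) = 0$. Combined with the pseudo-effectivity of $K_{R_0} + D_{R_0}$ from the same theorem, this forces $K_{R_0} + D_{R_0} \equiv 0$, which is the numerical content of (1); what remains is to promote this birational datum to an honest morphism on $(X, D)$, establish the klt property, and prove the local triviality in (3).

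Next, take a Kawamata cover $\pi : X_c \to X$ adapted to $D = \sum (1 - m_i^{-1}) D_i$ with Galois group $G$. By construction $K_{X_c} = \pi^{\ast}(K_X + D)$, so $-K_{X_c}$ is nef, and the very definition of sRC via vanishing of symmetric differentials on Kawamata covers implies that sRC fibers of $(X, D)$ become rationally connected upstairs. By uniqueness of MRC fibrations, the $\pi$-pullback of $\rho_0$ therefore agrees with the MRC fibration of $X_c$. Theorem \ref{thm-maincam} then supplies an honest morphism $\psi_c : X_c \to Y_c$ onto a smooth projective variety with $c_1(Y_c) = 0$ that is locally trivial, and by uniqueness $G$-equivariant. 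Taking the quotient should yield the required $\rho : (X, D) \to (R, D_R)$ with $R := Y_c / G$ and $D_R$ encoding the ramification locus of $Y_c \to R$; condition (2) is then inherited from Theorem \ref{thm-cam}, and condition (1) follows from the numerical triviality obtained in the first step together with the Beauville-Bogomolov decomposition applied to $Y_c$.

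The main obstacle is the $G$-equivariant descent of the local product structure to an isomorphism of pairs. The quotient $R = Y_c/G$ is smooth only when $G$ acts as a complex reflection group in codimension one, and even granting this, one must verify that the horizontal foliation on $X_c$ given by $\psi_c$ interacts with $G$ in such a way that the local isomorphisms $\psi_c^{-1}(V) \cong V \times (X_c)_v$ descend to isomorphisms of pairs $(\rho^{-1}(U), D) \cong (U, D_R\vert_U) \times (X_r, D_{X_r})$. I expect this to require an orbifold version of the numerical flatness arguments underlying Proposition \ref{propnumflat}, together with a direct analysis of the $G$-representations on $T_{(X_c)_v}$ along fibers, and an orbifold analogue of the Beauville-Bogomolov decomposition on $(R, D_R)$ to rigidify both the base and the fibration in the pair category.
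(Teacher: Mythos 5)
This statement is labeled a \textbf{Conjecture} in the paper and is \emph{not} proved there in full generality; the authors establish it only in dimension two (via Theorem \ref{thm-slope} and Theorem \ref{product}), and Theorem \ref{tfsrc} records some partial information in higher dimension. The paper explicitly identifies the obstruction in Section \ref{sRC}: the absence of an orbifold analogue of Zhang's semistability theorem (Theorem \ref{prop-fun}), which is the backbone of the smooth case. So any complete argument you produce would have to go beyond what the authors themselves could achieve.

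Your sketch of reducing to Theorem \ref{thm-maincam} via a Kawamata cover $\pi : X_c \to X$ runs into a concrete problem at the very first step: $X_c$ is generally \emph{not} smooth (it has quotient singularities along the branch locus), whereas Theorem \ref{thm-maincam} is stated and proved under the hypothesis that the total space is smooth, and its proof (numerical flatness of direct images, local triviality from Proposition \ref{isotri}) is not known to extend to singular total spaces. Replacing $X_c$ by a resolution destroys $K_{X_c} = \pi^{\star}(K_X + D)$, reintroducing a pair on the cover and hence the very problem you were trying to eliminate. Further, ``uniqueness of MRC fibrations'' gives you only a birational map, not $G$-equivariance of the morphism $\psi_c$; and the quotient $Y_c/G$ has no reason to be smooth nor to carry the sRC orbifold base $D_R$ rather than the branch divisor of $Y_c \to Y_c/G$ (these can differ). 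You acknowledge the descent issues but do not propose a mechanism, and the orbifold Beauville-Bogomolov decomposition you invoke is itself not available. In short, the idea of covering to remove $D$ is natural but does not circumvent the orbifold semistability gap the authors flagged; the surface proof in the paper proceeds instead by a direct classification of ruled surfaces (Lemmas \ref{l1}--\ref{lproduct}), a route that has no obvious higher-dimensional counterpart.
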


In this paper, we confirm the conjecture 
in the case of $(X, D)$ being a smooth orbifold surface  
(see Theorem \ref{thm-sRC-intro}), 
and observe the general case (see Theorem \ref{tfsrc}). 
Further we provide Example \ref{exlc}, which says that 
the above structure result can not be expected for (non-klt) lc pairs. 

\begin{theo}\label{thm-sRC-intro}
Let $(X, D)$ be a klt pair such that 
$X$ is a smooth surface, 
the support of $D$ is normal crossing, 
and $- (K_X +D)$ is nef. 
Then Conjecture \ref{mainconsRC} is true.

Moreover, if we assume that $h^1(X,\mathcal{O}_X)=0$ and $(X,D)$ is not sRC, 
then $(X,D)$ is a product with respect to pairs, 
namely, we have the isomorphism
$$
(X,D)\cong(R,D_R)\times (\mathbb{P}^1,D_{\mathbb{P}^1}). 
$$
\end{theo}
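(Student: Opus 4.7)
The plan is to combine Theorem~\ref{thm-cam} with the structure theorems developed in this paper (especially Theorem~\ref{thm-maincam}, Theorem~\ref{splitgener}, and Proposition~\ref{propnumflat}) together with the classification of smooth projective surfaces.

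First, I would apply Theorem~\ref{thm-cam} to $(X,D)$ to obtain the sRC quotient $\rho\colon (X',D')\to (R,D_R)$ with $\kappa(K_R+D_R)=\nd(K_R+D_R)=0$, so that $K_R+D_R\equiv 0$. Since $\dim X=2$, one has $\dim R\in\{0,1,2\}$. The cases $\dim R=0$ (the pair is already sRC) and $\dim R=2$ (where birational $\rho$ together with local triviality of $0$-dimensional fibers forces $\rho$ to be an isomorphism and $K_X+D\equiv 0$) satisfy Conjecture~\ref{mainconsRC} at once. In the substantive case $\dim R=1$, the general orbifold fiber is an sRC orbifold curve, hence an orbifold $\mathbb{P}^1$, so the smooth general fibers are $\mathbb{P}^1$. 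Since any rational map from a smooth projective surface to a smooth projective curve extends to a morphism, the quotient is represented by a morphism $\rho\colon X\to R$ onto a smooth projective curve $R$, and then $c_1(K_R+D_R)=0$. Local triviality with respect to the pair $(X,D)$ is obtained by applying the direct image positivity and numerical flatness arguments from the proof of Theorem~\ref{thm-maincam} (via Theorem~\ref{splitgener} and Proposition~\ref{propnumflat}) to the direct image sheaf $\rho_*\sO(-m(K_{X/R}+D))$ for $m$ sufficiently large and divisible; upgrading the underlying $\mathbb{P}^1$-bundle structure to local triviality with respect to the divisor $D$ is the key technical step.

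For the product decomposition, assume $h^1(X,\sO_X)=0$ and $(X,D)$ is not sRC. If $\dim R=2$, then $K_X+D\equiv 0$; combined with $h^1(X,\sO_X)=0$ and surface classification, this forces $X$ to be a K3 surface, an Enriques surface, or a rational log Calabi-Yau pair. In each case the Kawamata cover of the logarithmic tangent sheaf is numerically flat and polystable with numerically trivial determinant, yielding $(X,D)$ sRC, a contradiction. Hence $\dim R=1$, and since $\rho$ is locally trivial with $\mathbb{P}^1$ fibers one has $h^1(R,\sO_R)=h^1(X,\sO_X)=0$, so $R\cong\mathbb{P}^1$ and $X\cong\mathbb{F}_n$ for some $n\geq 0$. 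To conclude $n=0$, I would use the gluing cocycle of the local pair-trivializations, which lies in $\mathrm{Aut}(\mathbb{P}^1,D_F)$ where $D_F=D|_F$: when $D_F$ has at least three orbifold points, this group is finite and the simple connectedness of $R=\mathbb{P}^1$ trivializes the associated principal bundle; in the remaining cases, the nefness $-(K_X+D)\cdot C_0\ge 0$ along the negative section $C_0\subset\mathbb{F}_n$ combined with an sRC verification of the resulting vertical pair $(\mathbb{F}_n,\rho^*D_R)$ rules out $n\geq 1$. This yields $X\cong\mathbb{P}^1\times\mathbb{P}^1$ and the splitting of $D$, giving the global product decomposition $(X,D)\cong(R,D_R)\times(\mathbb{P}^1,D_{\mathbb{P}^1})$.

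The two main obstacles are (i) the orbifold upgrade of local triviality with respect to $D$ in the $\dim R=1$ case, which requires adapting the direct image positivity arguments of this paper to the orbifold-fiber setting, and (ii) ruling out the nontrivial Hirzebruch cases $\mathbb{F}_n$ with $n\geq 1$, which requires a careful sRC analysis of the vertical pair $(\mathbb{F}_n,\rho^*D_R)$ together with the topological constraints imposed by $h^1(X,\sO_X)=0$.
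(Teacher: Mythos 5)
Your proposal has two genuine gaps that would break the argument, and it also takes a substantially different (and harder) route than the paper in places where the paper's proof is quite elementary.

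\textbf{Gap 1: the regularity claim is false.} You assert that ``any rational map from a smooth projective surface to a smooth projective curve extends to a morphism,'' and use this to promote the sRC-quotient rational map $f\circ g^{-1}$ to a morphism on $X$. This is simply not true: a pencil of conics on $\mathbb{P}^2$ gives a rational map $\mathbb{P}^2\dashrightarrow\mathbb{P}^1$ with four base points. The paper has to \emph{prove} regularity of $f\circ g^{-1}$ (Lemma~\ref{l1}), and that proof is not cosmetic; it exploits the specific numerics of the orbifold base --- multiplicities $t_F$, the inequalities $t_Fm_{F,D'}\geq m_{b,D_B}$, and the pseudo-effectivity of $K_B+D_B$ --- to rule out a $g$-exceptional $f$-horizontal $\mathbb{P}^1$. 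Without this lemma, your passage from the neat representative $(X',D')$ to a fibration defined on $X$ itself collapses.

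\textbf{Gap 2: you assume $X$ is already minimal over $R$.} Even granting regularity of $\rho\colon X\to R$ with general fiber $\mathbb{P}^1$, the surface $X$ can be a non-minimal ruled surface, with reducible fibers of $\rho$. You jump directly to $X\cong\mathbb{F}_n$, which is unjustified. The paper handles the intermediate blow-downs carefully: Lemma~\ref{l2} propagates nefness of $-(K+D)$ through each $(-1)$-contraction to reach the Hirzebruch model $(P,D_P)$, the intersection computation of Lemma~\ref{l3} forces $m=0$ so $P\cong\mathbb{P}^1\times\mathbb{P}^1$ with $D_P$ splitting, and then Lemma~\ref{lproduct} climbs back up and shows the blow-downs were trivial (via an explicit intersection-theoretic contradiction with the klt bound $s<1$). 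Your proposal has no analogue of Lemma~\ref{lproduct} at all, so even if the final Hirzebruch analysis succeeded, it would identify $(P,D_P)$ rather than $(X,D)$.

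As a methodological comparison: your route for local triviality in the one-dimensional base case --- applying the direct-image positivity and numerical flatness machinery (Proposition~\ref{propnumflat}, Theorem~\ref{splitgener}) to $\rho_*\mathcal{O}(-m(K_{X/R}+D))$ --- is conceptually plausible but is \emph{not} what the paper does, and you yourself flag it as an obstacle requiring new orbifold-fiber technology. The paper instead separates the cases $h^1(X,\mathcal{O}_X)\neq 0$ (where Theorem~\ref{thm-maincam} already gives the local triviality via the MRC fibration, and a short case analysis in the proof of Theorem~\ref{thm-slope} handles $\dim Y\in\{1,2\}$) and $h^1(X,\mathcal{O}_X)=0$ (where the whole job is done by the elementary Hirzebruch intersection analysis of Lemmas~\ref{l1}--\ref{lproduct}). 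The paper's approach is sharper and more self-contained precisely because it never needs directed image positivity on the orbifold fibers; you should adopt its two-track structure rather than attempting a unified sRC-first argument.
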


When $(X,D)$ is a smooth klt surface pair with $-(K_X+D)$ nef, 
the sRC-fibration gives a more precise description 
than the classical MRC fibration (which is trivial), 
except when $(X,D)$ is sRC, or when $-(K_X+D)$ is numerically trivial.

\smallskip

The organization of this paper is as follows: 
Our methods heavily depend on positivity of direct image sheaves and 
properties of numerical flatness. 
For this reason, 
in Section \ref{Sec2}, 
we develop the theory of positivity of direct image sheaves and 
numerically flat vector bundles. 
In Section \ref{Sec3}, for the proof of Theorem \ref{splitgener}, 
we generalize key propositions obtained in \cite{Cao} and \cite{CH} 
to a morphism with nef relative anti-canonical divisor. 
In Section \ref{Sec4}, we prove  
Theorem \ref{thm-main}, Theorem \ref{splitgener}, and Theorem \ref{thm-maincam}, 
by applying the theory established in Section \ref{Sec2} and Section \ref{Sec3}. 
In Section \ref{sRC}, we discuss some properties of sRC quotients for orbifolds 
with nef anti-log canonical divisors.



\subsection*{Acknowledgements}
The third author would like to thank Prof. Yoshinori Gongyo for discussion on \cite{EG}. 
He also would like to express his thank to 
the members of Institut de Math\'ematiques de Jussieu\,-\,Paris Rive gauche 
for their hospitality during my stay. 
He is supported by the Grant-in-Aid 
for Young Scientists (A) $\sharp$17H04821 from JSPS 
and the JSPS Program for Advancing Strategic International Networks 
to Accelerate the Circulation of Talented Researchers.
This work is partially supported by the Agence Nationale de la Recherche grant 
\lq \lq Convergence de Gromov-Hausdorff en g\'{e}om\'{e}trie k\"{a}hl\'{e}rienne" (ANR-GRACK).

\section{Preliminaries}\label{Sec2}
In this section, after we summarize several results for  positivity of direct image sheaves, 
we develop the theory of numerically flat vector bundles. 
The new ingredients here are Proposition \ref{propnumflat}, 
Proposition \ref{isotri}, and Proposition \ref{prop-rank}. 

\subsection{Positivity of direct image sheaves}\label{Sec2-1}
In this subsection, we shortly recall the notion of singular hermitian metrics on vector bundles 
(more generally torsion free sheaves) and the theory of positivity of direct image sheaves 
(see \cite{Ber09}, \cite{BP08}, \cite{PT18} for more details). 

Let $E$ be a (holomorphic) vector bundle of rank $r$ on a complex manifold $Y$. 
A (possibly) singular hermitian metric $h$ on $E$ is a hermitian metric that 
can be locally written as a measurable map with the values 
in the set of semi-positive (possibly unbounded) hermitian forms on $\mathbb{C}^r$ 
satisfying that $0 < \det h < +\infty$ almost everywhere. 
A singular hermitian vector bundle $(E, h)$ is said to be 
\textit{positively curved}, 
if $\log |s|_{h^{\star}}$ is a plurisubharmonic (psh for short) function on $U$
for any open set $U \subset Y$ and any section $s \in H^{0}(U, E^\star)$, 
where $h^{\star}:={}^th^{-1}$ is the dual hermitian metric of $h$ on the dual bundle $E^{\star}$. 
When $h$ is a smooth  hermitian metric, 
the curvature associated to $h$ is semi-positive in the sense of Griffith 
if and only if $(E,h)$ is positively curved. 
For a smooth $(1,1)$-form $\theta$ on an open set $U \subset Y$, 
we simply write    
$$
\sqrt{-1}\Theta_{h}(E) \succeq \theta \otimes {\rm{Id}}_E \text{ on }  U 
$$
if $\ddbar \log |s|_{h^{\star}}-\theta$ is semi-positive in the sense of currents 
for any open set $U' \subset U$ and any section $s \in H^{0}(U', E^\star)$. 
The notation of $\sqrt{-1}\Theta_{h}(E)$ is used in the above definition, 
but an appropriate definition of the curvature is unknown 
for singular hermitian vector bundles in general. 
We now introduce the notion of weakly positively curved torsion free sheaves.

\begin{defi}\label{def-weak}
Let $E$ be a torsion free (coherent) sheaf on a complex manifold $Y$ 
and $\omega_Y$ be a fixed hermitian form on $Y$. 
The sheaf $E$ is said to be {\textit{weakly positively curved}} 
if for any real number $\ep >0$ there exists a singular hermitian metric $h_\ep$ on 
$E|_{Y_1}$ 
such that 
$$
\sqrt{-1} \Theta_{h_\ep} (E) \succeq -\ep \omega_Y \otimes {\rm{Id}}_E  \text{ on } Y_1, 
$$ 
where $Y_1$ is the maximal locally free locus of $E$. 
\end{defi}

It follows that, 
for a proper morphism  $\varphi: \Gamma \to Y$ with connected fiber, 
the direct image sheaf of the pluri-relative canonical bundle $m K_{\Gamma/Y}$ 
twisted by a line bundle $L$
$$
\varphi_\star(m K_{\Gamma/Y}+L) 
$$
admits a positively curved singular hermitian metric 
under suitable assumptions, 
from the fundamental works developed by several authors 
including Berndtsson, P\u{a}un, Takayama 
(for example see \cite{Ber09}, \cite{BP08}, \cite{PT18}, and references therein). 
The results for positivity of direct image sheaves, 
which play an important role in this paper, 
can be summarized in the following form. 
In the following theorem, 
the first conclusion on positivity of direct image sheaves 
is derived from \cite{BP08}, 
\cite[Lemma 5.25]{CP17}, and \cite{PT18}. 
Also the latter conclusion is an application of the first conclusion. 
Here we give only the proof of the latter conclusion.

\begin{theo}[{\cite[Lemma 5.25]{CP17}, \cite{PT18}, \cite{BP08}}]\label{thm-cur}
Let $\varphi: \Gamma \to Y$ be a morphism with connected fiber between smooth projective varieties 
and $(L, h)$ be a singular hermitian line bundle on $\Gamma$. 
Let $m\in\mathbb{Z}_+$ such that 
$\I{h^{1/m}|_{\Gamma_y}}=\mathcal{O}_{\Gamma_{y}}$ holds 
for a general fiber $\Gamma_y$. 
Here $\I{\bullet}$ denotes the multiplier ideal sheaf of 
a singular hermitian metric $\bullet$. 

$(1)$ If $\sqrt{-1}\Theta_{h}(L) \geq \varphi^\star \theta $ for a smooth $(1,1)$-form $\theta$ on $Y$,  then the induced singular hermitian metric $H$ on the direct image sheaf 
$\varphi_\star(mK_{\Gamma/Y}+L)|_{Y_1}$ 
satisfies that 
$$
\sqrt{-1}\Theta_H (\varphi_\star(mK_{\Gamma/Y}+L))\succeq \theta \otimes {\rm{Id}}_Y  \text{ on } Y_1, 
$$
where $Y_1$ is the maximal locally free locus of $\varphi_\star(mK_{\Gamma/Y}+L)$.

$(2)$ We further assume that 
$L$ is $\varphi$-big and it satisfies that 
$\sqrt{-1}\Theta_{h}(L) \geq 0$ in the sense of currents.
Then, for a nef line bundle $N$ on $\Gamma$,  the direct image sheaf 
$\varphi_\star(mK_{\Gamma/Y}+N+ L)$ is weakly positively curved on $Y$.
\end{theo}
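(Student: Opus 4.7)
My plan is to reduce the weak positivity of $F := \varphi_\star(mK_{\Gamma/Y} + N + L)$ to part (1) of this theorem via an approximation scheme that exploits the $\varphi$-bigness of $L$ and the nefness of $N$ to construct a family of metrics whose curvatures are bounded below by pullback forms from $Y$. Per Definition \ref{def-weak}, it suffices to produce, for each $\varepsilon > 0$, a singular hermitian metric on $F|_{Y_1}$ with curvature $\succeq -\varepsilon\omega_Y \otimes \mathrm{Id}_F$.

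First, applying the relative Kodaira lemma to the $\varphi$-big line bundle $L$, I would write $kL \sim A + E + \varphi^\star M$ for some integer $k\geq 1$, an ample line bundle $A$ on $\Gamma$, an effective divisor $E$ on $\Gamma$, and a line bundle $M$ on $Y$. This yields an auxiliary singular metric $h_0$ on $L$ with $\sqrt{-1}\Theta_{h_0}(L) \geq \tfrac{1}{k}(\omega_A + \varphi^\star\theta_M)$, where $\omega_A$ is the Kähler curvature of a smooth metric on $A$ and $\theta_M$ that of a smooth metric on $M$. Fixing an ample $A_Y$ on $Y$ with Kähler curvature form $\omega_{A_Y}$ and an integer $p$ with $\theta_M + p\omega_{A_Y} \geq 0$, and choosing for every $\delta>0$ a smooth metric $h_{N,\delta}$ on $N$ with $\sqrt{-1}\Theta_{h_{N,\delta}}(N) \geq -\delta\omega_A$ (available by nefness of $N$), I would define, for small rational $\tau>0$, the singular metric
$$
h_\tau \;:=\; h^{1-\tau}\cdot h_0^{\tau}\cdot h_{N,\tau/(2k)}\cdot \varphi^\star\bigl(h_{A_Y}^{\tau p/k}\bigr)
$$
on the $\mathbb{Q}$-line bundle $L + N + \varphi^\star(\tfrac{\tau p}{k}A_Y)$. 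A direct computation gives
$$
\sqrt{-1}\Theta_{h_\tau}\;\geq\;\tfrac{\tau}{2k}\omega_A + \varphi^\star\theta_\tau \;\geq\;\varphi^\star\theta_\tau, \qquad \theta_\tau := \tfrac{\tau}{k}\bigl(\theta_M + p\omega_{A_Y}\bigr)\geq 0.
$$

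The critical technical step is to verify the fiberwise multiplier ideal condition $\I{h_\tau^{1/m}|_{\Gamma_y}} = \mathcal{O}_{\Gamma_y}$ for $\tau$ sufficiently small. The smooth factors of $h_\tau$ contribute trivially; the factor $h_0^\tau$ contributes only the divisorial singularity $\tfrac{\tau}{mk}[E|_{\Gamma_y}]$, which is harmless for small $\tau$; and the factor $h^{(1-\tau)/m}$ is handled via Guan--Zhou's strong openness theorem combined with the hypothesis $\I{h^{1/m}|_{\Gamma_y}}=\mathcal{O}_{\Gamma_y}$. After clearing denominators to reduce to honest line bundles, part (1) applied to $h_\tau$ furnishes a singular hermitian metric $H_\tau$ on
$$
\varphi_\star\bigl(mK_{\Gamma/Y} + L + N + \varphi^\star(\tfrac{\tau p}{k}A_Y)\bigr) \;=\; F \otimes \tfrac{\tau p}{k}A_Y
$$
(by the projection formula) with curvature $\succeq \theta_\tau \otimes \mathrm{Id}$. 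Dividing by the smooth twist metric produces a metric on $F$ with curvature $\succeq -\tfrac{\tau p}{k}\omega_{A_Y}\otimes\mathrm{Id}$, and letting $\tau \to 0$ (together with making $\omega_{A_Y}$ small relative to $\omega_Y$ by rational scaling) completes the argument.

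The main obstacle I anticipate is the multiplier ideal verification in the third paragraph: one must show that the interpolation $h \mapsto h_\tau$ preserves the triviality of the fiberwise multiplier ideal, which relies essentially on the full strength of Guan--Zhou's strong openness theorem and on keeping the coefficient of $E$ strictly less than $1$. A secondary but routine nuisance is the rational-coefficient bookkeeping when twisting by $\tfrac{\tau p}{k}A_Y$, which is resolved by passing to a sufficiently divisible integer multiple before invoking part (1).
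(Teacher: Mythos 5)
Your proposal is correct and follows essentially the same strategy as the paper: interpolate the given metric $h$ with a strictly positively curved singular metric on $L$ obtained from $\varphi$-bigness and a nearly flat smooth metric on $N$ obtained from nefness, verify that the interpolated metric retains the trivial fiberwise multiplier ideal, and then apply part (1). The paper's version is more economical --- it directly takes a metric $H$ on $L$ with $\sqrt{-1}\Theta_H(L)+\varphi^\star\omega_Y\geq\omega$ and a smooth $g_\delta$ on $N$, and applies part (1) with $\theta=-\varepsilon\omega_Y$ without twisting by an ample $A_Y$ (part (1) does not need $\theta\geq 0$), and the multiplier ideal check for small exponents needs only a Skoda-type bound rather than the full strong openness theorem --- but these are stylistic, not substantive, differences.
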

\begin{proof}[{Proof of Theorem \ref{thm-cur} $(2)$}]
There are a singular hermitian metric $H$ on $L$  and 
a smooth hermitian metric $g_{\delta} $ on $N$
such that 
$$
\sqrt{-1}\Theta_H(L) + \varphi^{\star} \omega_Y \geq \omega \text{ and } 
\sqrt{-1}\Theta_{g_{\delta}}(N) \geq - \delta \omega
$$
for some K\"ahler form $\omega_Y$ on $Y$ and some K\"ahler form $\omega$ on $\Gamma$, 
since $L$ is $\varphi$-big and $N$ is nef. 
Then the metric $h_{\ep}$ on $L+N$
defined by $h_{\ep}:=h^{1-\ep} H^{\ep} g_{\ep}$ satisfies that 
$$
\I{h_{\ep}^{1/m}|_{\Gamma_y}}=\I{h^{(1-\ep)/m} H^\ep|_{\Gamma_y}}
=\mathcal{O}_{\Gamma_{y}} \text{ and }
\sqrt{-1}\Theta_{h_{\ep}}(L+N) \geq -\ep \varphi^{\star} \omega_Y 
$$
for any $1 \gg \ep \gg \delta >0$. 
Therefore the conclusion follows from the first conclusion. 
\end{proof}

As a corollary, we have the following lemma, which will be useful for us.

\begin{lemm}\label{withpairlem}
Let $\varphi: X\rightarrow Y$ be a morphism with connected fiber 
between smooth projective varieties 
and let $\Delta$ be a klt divisor on $X$ such that $- (K_{X/Y} +\Delta)$ is nef.
Let $L$ be a pseudo-effective and $\varphi$-relatively big line bundle on $X$.
Then, for any $p, q \in \mathbb{Z}_+$, 
the direct image sheaf $\varphi_\star (p \Delta + q L)$ is weakly positively curved on $Y$.
\end{lemm}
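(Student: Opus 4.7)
The plan is to deduce this from Theorem \ref{thm-cur}$(2)$ by an appropriate splitting of $p\Delta + qL$ into a relative canonical piece, a nef piece, and a $\varphi$-big piece with a good singular metric. The key identity is
\[
p\Delta + qL \;=\; mK_{X/Y} \;+\; m\bigl(-(K_{X/Y}+\Delta)\bigr) \;+\; \bigl((m+p)\Delta + qL\bigr),
\]
which holds as $\mathbb{Q}$-line bundles for any $m \in \mathbb{Z}_+$ and as honest line bundles once $m$ is taken divisible enough that $m\Delta$ and $m(K_{X/Y}+\Delta)$ are Cartier. Here $N := m\bigl(-(K_{X/Y}+\Delta)\bigr)$ is nef by hypothesis, and $M := (m+p)\Delta + qL$ will play the role of the $\varphi$-big, semi-positively metrized line bundle in the application of Theorem \ref{thm-cur}$(2)$.

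I would equip $M$ with the tensor product metric $h_M := h_\Delta^{\,m+p} \otimes h_L^{\,q}$, where $h_\Delta$ is the canonical singular metric on $\mathcal{O}_X((m+p)\Delta)$ defined by its defining section (so that its curvature current equals $[(m+p)\Delta] \geq 0$), and $h_L$ is a singular metric on $L$ with $\sqrt{-1}\Theta_{h_L}(L) \geq 0$, furnished by pseudo-effectiveness of $L$. Using the $\varphi$-bigness of $L$, I would further arrange $h_L$ so that $\mathcal{I}(h_L^{\alpha}|_{X_y}) = \mathcal{O}_{X_y}$ holds for all sufficiently small $\alpha > 0$ on a general fiber $X_y$; this can be achieved via a relative Bergman-kernel construction, or by twisting the minimal-singularity metric by a small multiple of a relative Kodaira-lemma decomposition $kL \sim A + E$ with $A$ a $\varphi$-ample divisor and $E$ effective. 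Since $(X,\Delta)$ is klt, there exists $\delta > 0$ such that $(X, t\Delta)$ is still klt for every $t < 1+\delta$; I then take $m$ large enough so that simultaneously $(m+p)/m < 1+\delta$ and $q/m < \alpha$.

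With these choices, $h_M$ has semi-positive curvature as a tensor product of semi-positively curved metrics, $M$ is $\varphi$-big because $L$ already is and $(m+p)\Delta$ is effective, and on a general fiber
\[
h_M^{1/m}|_{X_y} \;=\; h_\Delta^{(m+p)/m}|_{X_y} \otimes h_L^{q/m}|_{X_y}
\]
has trivial multiplier ideal: the $\Delta$-factor by the klt threshold chosen above, and the $h_L$-factor by the construction of $h_L$. Theorem \ref{thm-cur}$(2)$ then applies to $(M,h_M)$ with the nef twist $N$ and yields that $\varphi_\star\bigl(mK_{X/Y}+N+M\bigr) = \varphi_\star(p\Delta+qL)$ is weakly positively curved on $Y$. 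The main technical difficulty I anticipate is the preparation of the metric $h_L$: pseudo-effectiveness alone gives only global semi-positive curvature, which may be very singular along fibers, whereas $\varphi$-bigness is a fiber-wise property; combining them into a single metric that is \emph{both} globally semi-positively curved \emph{and} has controlled multiplier ideal on general fibers is the nontrivial analytic input, and is where the hypotheses on $L$ are used in tandem rather than separately.
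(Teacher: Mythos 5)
Your proof matches the paper's almost verbatim: the same decomposition $p\Delta + qL = mK_{X/Y} - m(K_{X/Y}+\Delta) + ((m+p)\Delta + qL)$, the same choice of $h_\Delta$ and a semi-positively curved $h_L$ on the $\varphi$-big piece, the klt condition to control the $\Delta$-factor of the multiplier ideal for $m$ large, and the same appeal to Theorem~\ref{thm-cur}$(2)$ with $N = -m(K_{X/Y}+\Delta)$ nef. The difficulty you flag at the end is not actually something you need to solve in this lemma: the blending of a globally semi-positive metric on $L$ with a fiberwise-well-behaved one is precisely what the proof of Theorem~\ref{thm-cur}$(2)$ does internally (it produces a second metric $H$ from $\varphi$-bigness and interpolates $h^{1-\ep}H^{\ep}g_{\ep}$), so here you only need $h_L$ with $\sqrt{-1}\Theta_{h_L}(L)\geq 0$ together with the automatic fact that $\mathcal{I}(h_L^{q/m}|_{X_y}) = \mathcal{O}_{X_y}$ on a general fiber once $m$ is large (bounded Lelong numbers plus Skoda), rather than any specially arranged metric.
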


\begin{proof}
Let $h_L$ be a possible singular hermitian metric on $L$ such that $ \Theta_{h_L} (L) \geq 0$ on $X$. Let $h_\Delta$ be the canonical singular metric 
defined by the effective divisor $\Delta$. 
For $m$ large enough with respect to $p, q$, 
it can be seen that  
$$
\mathcal{I} (h_L ^{\frac{q}{m}} \cdot h_\Delta ^{\frac{m+p}{m}}|_{X_y}) =\mathcal{O}_{X_y}
$$ 
for a general fiber $X_y$.
Here we used the assumption of $\Delta$ begin a klt divisor.  
By applying Theorem \ref{thm-cur} $(2)$ to 
$$p\Delta + q L = m K_{X/Y}  - m (K_{X/Y} +\Delta) + (p+m)\Delta + q L,$$
we can see that $\varphi_\star (p\Delta + q L )$ is weakly positively curved on $Y$.
\end{proof}

%

\subsection{Numerically flat vector bundles}\label{Sec2-2}

In this subsection, 
after we recall the notion of numerically flat vector bundles, 
we show that any weakly positively curved reflexive sheaf with numerically trivial first Chern class 
is actually a vector bundle and numerically flat. 
It was proved in \cite[Cor 2.12]{CH} for projective surfaces.
Together with a Bando-Siu's result (see \cite{BS}) 
for (admissible) Hermitian-Einstein metrics, we can generalize \cite[Cor 2.12]{CH} to arbitrary dimension.

We first recall the definition of numerically flat vector bundles and 
the fundamental work of \cite{DPS94} on a characterization 
in terms of filtrations of hermitian flat vector bundles. 

\begin{defi}\label{def-numflat}
Let $E$ be a vector bundle on a compact complex manifold $Y$. 
The vector bundle $E$ is said to be \textit{numerically flat} 
if $E$ is nef and its dual bundle $E^\star$ is also nef, 
equivalently  $E$ is a nef vector bundle satisfying that 
$$
c_{1}(E)=c_1(\det E) =0 \in H^{1,1}(Y, \mathbb{R}).  
$$
Here a vector bundle $E$ is said to be \textit{nef} 
if the tautological line bundle $\mathcal{O}_{\mathbb{P}(E)}(1)$ is nef 
on $\mathbb{P}(E)$. 
\end{defi}


\begin{theo}[\cite{DPS94}]\label{thm-numflat}
Let $E$ be a  vector bundle on a compact K\"ahler manifold $Y$. 
Then the following conditions are equivalent$:$
\begin{itemize}
\item[$\bullet$] $E$ is numerically flat. 
\item[$\bullet$]  There exists a filtration of $E$ by subbundles 
$$
0=:E_0 \subset E_1 \subset \dots \subset E_{p-1} \subset E_p:=E 
$$
such that each quotient bundle $E_k/E_{k-1}$ 
is a hermitian flat vector bundle for any $1 \leq k \leq p$ 
$($that is, the quotient $E_k/E_{k-1}$ admits a smooth hermitian metric $h_k$ such that 
$\sqrt{-1}\Theta_{h_k}(E_k/E_{k-1})=0$$)$. 

\end{itemize}
\end{theo}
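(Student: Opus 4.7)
The implication ``filtration with hermitian flat quotients $\Rightarrow$ numerical flatness'' is formal. A hermitian flat bundle is nef and has nef dual, since flatness descends to the tautological line bundle on each projectivization; nefness is preserved under extensions of vector bundles on compact K\"ahler manifolds by the standard projectivization argument, and dualizing reverses the filtration into one of the same type on $E^{\star}$, so both $E$ and $E^{\star}$ are nef.

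For the converse I would argue by induction on $r := \rank E$. The rank-one case reduces to showing that a nef line bundle $L$ with $c_{1}(L) = 0$ in $H^{1,1}(Y,\mathbb{R})$ is hermitian flat: via the exponential sheaf sequence one sees $L \in \mathrm{Pic}^{0}(Y)$, and every line bundle on $\mathrm{Pic}^{0}(Y)$ of a compact K\"ahler manifold admits a flat metric. For $r \geq 2$, fix a K\"ahler class $\omega$ on $Y$ and let $F \subset E$ be the maximal destabilizer in the Harder--Narasimhan filtration of $E$ with respect to $\omega$. Since $c_{1}(E) = 0$ we have $\mu_{\omega}(E) = 0$; maximality gives $\mu_{\omega}(F) \geq 0$, while the Fulton--Lazarsfeld positivity of Schur polynomials in the Chern classes of the nef bundle $E^{\star}$ forces $\mu_{\omega}(F) \leq 0$. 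Hence $F$ is $\omega$-semistable of slope zero, and after refining $F$ to a stable subsheaf (still of slope zero) one reduces to the stable case.

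To handle the stable slope-zero piece $F$, I would invoke Bando--Siu's theorem \cite{BS} to equip $F$ with an admissible Hermite--Einstein metric of vanishing Einstein constant on its locally free locus. The L\"ubke--Kobayashi inequality then gives
\[
\int_{Y} \bigl( 2 \rank(F)\, c_{2}(F) - (\rank(F) - 1)\, c_{1}(F)^{2} \bigr) \wedge \omega^{n-2} \geq 0,
\]
with equality iff the admissible metric is projectively flat. Fulton--Lazarsfeld applied to $E$ yields $c_{2}(E) \cdot \omega^{n-2} = 0$ (combining $c_{2}(E) \geq 0$ with $c_{1}(E)^{2} - c_{2}(E) \geq 0$), and using the Chern class identities associated to $0 \to F \to E \to E/F \to 0$ together with the slope-zero condition on both $F$ and $E/F$, one deduces that the L\"ubke--Kobayashi integrand vanishes. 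The metric on $F$ is therefore flat, so $F$ is locally free, hermitian flat, and in particular numerically flat; the quotient $E/F$ is then numerically flat of smaller rank, and the inductive hypothesis produces a filtration on $E/F$ whose lift extends the first step $F \subset E$ to the desired filtration on $E$.

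The main obstacle is the transfer of numerical flatness from $E$ to the destabilizer $F$. Fulton--Lazarsfeld gives the vanishing of Chern numbers of $E$ against $\omega^{n-2}$, but to apply L\"ubke--Kobayashi to $F$ one needs the analogous vanishings for $F$; extracting these from the short exact sequence in the (possibly singular) reflexive setting handled by Bando--Siu is the delicate point, and requires a careful combination of Schur positivity of the ambient nef bundle with the Hodge-index constraints on slope-zero subsheaves.
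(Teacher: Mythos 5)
The paper cites this theorem directly from \cite{DPS94} and does not give a proof of its own, so the comparison is against the DPS94 argument rather than anything in the paper. Your outline mirrors that argument at a coarse level: the formal implication via extension-stability of nefness and dualization, and the converse via Harder--Narasimhan reduction to a slope-zero semistable piece, Hermite--Einstein theory, and the equality case of the L\"ubke inequality forcing projective (hence, with $c_1=0$, actual) flatness.

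The gap you flag in your final paragraph is genuine, and as written it is fatal. To invoke the equality case of L\"ubke you need $c_2(F)\cdot\omega^{n-2}=0$, but Fulton--Lazarsfeld only gives this vanishing for $E$: the positivity of $s_{(1,1)}(E)=c_2(E)$ and of $s_{(2)}(E)=c_1(E)^2-c_2(E)=-c_2(E)$ against $\omega^{n-2}$ forces $c_2(E)\cdot\omega^{n-2}=0$. The sequence $0\to F\to E\to E/F\to0$ is a priori a sequence of torsion-free sheaves only, the Whitney relation needs care across singular loci, and neither $c_2(F)\cdot\omega^{n-2}\geq0$ nor $c_2(E/F)\cdot\omega^{n-2}\geq0$ is at hand before you know these sheaves are nef --- which is what you are trying to prove. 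Your route via Fulton--Lazarsfeld to bound $\mu_\omega(F)\leq 0$ is also unnecessarily indirect: $E/F$ is a torsion-free quotient of the nef $E$, so $\det(E/F)=-\det F$ is already a nef line bundle, giving the same bound elementarily.

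The way \cite{DPS94} closes the circle is not through Bando--Siu on a possibly singular sheaf but by first establishing that the saturated destabilizer $F$ is a vector \emph{subbundle} of $E$. The inclusion $F\hookrightarrow E$ determines a nonzero section $\sigma$ of $\Lambda^p E\otimes(\det F)^{-1}$ with $p=\rank F$; this bundle is numerically flat, being a nef exterior power tensored with a numerically trivial line bundle, and \cite{DPS94} (Proposition 1.16 / Corollary 1.19) shows a nonzero section of a numerically flat bundle on a compact K\"ahler manifold vanishes nowhere. Then $\sigma$ is a subbundle inclusion, $E/F$ is a nef quotient bundle with $c_1(E/F)=0$, hence numerically flat of smaller rank, and the bundle-level Whitney formula together with $c_2(E/F)\cdot\omega^{n-2}=0$ (Fulton--Lazarsfeld for the nef $E/F$) gives $c_2(F)\cdot\omega^{n-2}=0$. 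The missing key in your argument is this nowhere-vanishing lemma for sections of numerically flat bundles; Uhlenbeck--Yau or Bando--Siu should be invoked only once local freeness has already been secured.
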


The following lemma, which is needed for the proof of Proposition \ref{propnumflat}, 
easily follows from an induction on dimension and 
the standard argument by the restriction to hypersurfaces, 
and thus we omit the proof.

\begin{lemm}\label{lemm-elementary}
Let $E$ be a vector bundle on a smooth projective variety $Y$ and 
$Y_0$ be a Zariski open set in $Y$ with $\codim (Y \setminus Y_0) \geq 2+i$.
Then the morphism induced by the restriction 
$$
H^{j}(Y, E) \to H^{j}(Y_0, E)
$$
is an isomorphism for any $j \leq i$. 
\end{lemm}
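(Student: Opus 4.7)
The plan is to prove the lemma via local cohomology with support in $Z := Y \setminus Y_0$. Recall that $Z$ is closed in $Y$ with $\codim_Y(Z) \geq 2+i$. The fundamental excision long exact sequence
\begin{equation*}
\cdots \to H^j_Z(Y, E) \to H^j(Y, E) \to H^j(Y_0, E) \to H^{j+1}_Z(Y, E) \to \cdots
\end{equation*}
reduces the statement to proving that $H^j_Z(Y, E) = 0$ for all $j \leq i+1$, since one then has both the vanishing of $H^j_Z$ and of $H^{j+1}_Z$ in the relevant range.

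The key input is Grothendieck's vanishing theorem for local cohomology: for any coherent sheaf $\mathcal{F}$ on $Y$ and any closed $Z \subset Y$, one has $H^j_Z(Y, \mathcal{F}) = 0$ whenever $j < \operatorname{depth}_Z(\mathcal{F})$. Since $Y$ is smooth and $E$ is locally free, at every point $y \in Y$ the stalk $E_y$ is a free module over the regular local ring $\mathcal{O}_{Y,y}$, so $E$ is Cohen-Macaulay and a standard depth computation yields $\operatorname{depth}_Z(E) = \codim_Y(Z) \geq 2 + i$. Feeding this into Grothendieck's theorem gives $H^j_Z(Y, E) = 0$ for every $j \leq i+1$, and the excision sequence then delivers the desired isomorphism $H^j(Y, E) \cong H^j(Y_0, E)$ for each $j \leq i$.

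An alternative route, closer to the hint suggested in the paper, is to induct on $i$. The base case $i = 0$ is the Hartogs-type statement that sections of a locally free sheaf on a smooth (hence normal) variety extend uniquely across any closed subset of codimension at least $2$. For the inductive step, one chooses a sufficiently general ample smooth hypersurface $H \subset Y$, so that $H$ meets $Z$ properly and $\codim_H(H \cap Z) \geq 2+i$, and compares the long exact sequences associated to $0 \to E(-H) \to E \to E|_H \to 0$ on $Y$ and $Y_0$, using Serre vanishing for large twists to kill the $E(-mH)$ contributions. The only mildly delicate point of the local-cohomology proof is the identification $\operatorname{depth}_Z(E) = \codim_Y(Z)$, and the only mildly delicate point of the hyperplane induction is the choice of a hypersurface that restricts well to each component of $Z$; neither should be a genuine obstacle, so the lemma is essentially a packaging of standard material.
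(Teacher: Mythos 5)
Your main proof via local cohomology is correct and takes a genuinely different route from the one the paper gestures at: the authors omit the proof, saying only that it ``easily follows from an induction on dimension and the standard argument by the restriction to hypersurfaces.'' The local cohomology argument buys a cleaner, one-step proof: the excision sequence reduces everything to $H^{j}_{Z}(Y,E)=0$ for $j\leq i+1$, and this follows from Grothendieck's vanishing theorem once one knows $\operatorname{depth}_{Z}(E)=\operatorname{codim}_{Y}Z\geq 2+i$, which holds because $E$ is locally free and $Y$ is smooth (so $E$ is Cohen--Macaulay everywhere). No genericity choices or dimension induction are needed. Your sketch of the alternative hyperplane-induction route is roughly the one the paper has in mind, but as written it has a subtle issue you do not flag: you invoke Serre vanishing ``for large twists to kill the $E(-mH)$ contributions,'' yet Serre vanishing applies on the projective variety $Y$, not on the quasi-projective open $Y_{0}$, so on $Y_{0}$ one must instead chain together the five-lemma over successive twists $E(-kH)$ and use the inductive hypothesis on the hypersurface at each stage, which is more delicate than ``restricts well to each component of $Z$.'' Since your primary argument does not rely on this sketch, the proposal stands.
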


The following proposition was proved when $Y$ is a surface in \cite{CH}. 
We now generalize it to arbitrary dimension 
by using the complete intersection argument and Bando-Siu's theorem.
Note that, by the same method as in this paper, 
it is slightly generalized to pseudo-effective reflexive sheaves (see \cite{HIM19}). 

\begin{prop}\label{propnumflat}
Let $\mathcal{F}$ be a reflexive sheaf on a smooth projective variety $Y$. 
If $\mathcal{F}$ is weakly positively curved 
in the sense of Definition \ref{def-weak} and if we have $c_1 (\mathcal{F})=0$, 
then $\mathcal{F}$ is a numerically flat vector bundle on $Y$.
\end{prop}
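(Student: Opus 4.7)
The plan is to lift the surface argument of \cite[Cor.~2.12]{CH} to arbitrary dimension by combining a Mehta-Ramanathan-style reduction with the Bando-Siu theorem on admissible Hermitian-Einstein metrics for reflexive sheaves, followed by a standard extension-across-codimension-two argument. Throughout, we may work on the locally free locus $Y_1$ of $\mathcal{F}$, whose complement has codimension at least three since $\mathcal{F}$ is reflexive on a smooth variety.

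First I would show that $\mathcal{F}$ is semistable with respect to every polarization. The point is that weak positivity descends to any torsion-free quotient $\mathcal{F}|_{Y_1}\twoheadrightarrow \mathcal{Q}$: the quotient of each metric $h_\varepsilon$ provides a singular hermitian metric on $\mathcal{Q}$ whose curvature still satisfies $\sqrt{-1}\Theta(\mathcal{Q})\succeq -\varepsilon\omega_Y\otimes \mathrm{Id}_{\mathcal{Q}}$, so $\det \mathcal{Q}$ is pseudo-effective and hence $c_1(\mathcal{Q})\cdot H_1\cdots H_{n-1}\geq 0$ for every choice of ample classes $H_i$. Combined with $c_1(\mathcal{F})=0$, each graded piece of the Harder-Narasimhan filtration of $\mathcal{F}$ must have vanishing slope, so $\mathcal{F}$ is semistable with respect to every polarization $(H_1,\dots,H_{n-1})$.

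Next, I would apply the Bando-Siu theorem to the polystable decomposition of $\mathcal{F}|_{Y_1}$ to produce an admissible Hermitian-Einstein metric $h_{\mathrm{HE}}$ whose Einstein factor is proportional to $c_1(\mathcal{F})\cdot H^{n-1}=0$, hence vanishes. The delicate and main step, which I expect to be the principal obstacle, is to upgrade the Hermitian-Einstein condition together with weak positivity to genuine flatness $\sqrt{-1}\Theta_{h_{\mathrm{HE}}}(\mathcal{F})\equiv 0$. The idea is Bogomolov-Gieseker: on a smooth complete intersection surface $S$ cut out by generic elements of $|mH|$ one has $\int_S c_2(\mathcal{F}|_S,h_{\mathrm{HE}})\geq 0$, while combining $h_{\mathrm{HE}}$ with the weakly positive metrics $h_\varepsilon$ and integrating by parts bounds the same integral from above by a constant multiple of $\varepsilon$; letting $\varepsilon\to 0$ forces $c_2\cdot H^{n-2}=0$, so equality in Bogomolov-Gieseker gives projective flatness, and together with $c_1(\mathcal{F})=0$ this yields full flatness of $h_{\mathrm{HE}}$ on $Y_1$.

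Finally, a unitary flat bundle defined on the Zariski open subset $Y_1\subset Y$ whose complement has codimension at least three extends uniquely to a unitary flat bundle on all of $Y$, which may be justified by a Hartogs-type extension argument in the spirit of Lemma \ref{lemm-elementary} applied to the monodromy representation. Hence $\mathcal{F}$ itself is locally free and hermitian flat, and Theorem \ref{thm-numflat} identifies it as a numerically flat vector bundle, concluding the proof. The crux of the whole argument lies in the Bando-Siu/Bogomolov-Gieseker upgrade to flatness; the other steps are essentially bookkeeping once semistability and admissible Hermitian-Einstein metrics are in hand.
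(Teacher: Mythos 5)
Your proposal shares several key ingredients with the paper's argument (reduction to a general complete intersection surface, Bando--Siu, and the codimension-three extension step), but there is a genuine flaw at the central step. After deducing that $\mathcal{F}$ is slope-semistable for every polarization, you assert a ``polystable decomposition'' of $\mathcal{F}|_{Y_1}$ and then conclude that $\mathcal{F}$ is hermitian flat. A semistable sheaf need not be polystable; the Harder--Narasimhan/Jordan--H\"older data is a genuine \emph{filtration}, not a direct sum, and the successive extensions can be non-split. The Bando--Siu theorem on admissible Hermitian--Einstein metrics requires a \emph{stable} (or polystable) sheaf and therefore cannot be applied to $\mathcal{F}$ directly. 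In fact your final claim that $\mathcal{F}$ is hermitian flat is false in general: Example~\ref{exa} exhibits a weakly positively curved bundle $E$ with $c_1(E)=0$ (the non-split extension of $\sO_Y$ by $\sO_Y$ over an elliptic curve) which is numerically flat but \emph{not} hermitian flat. This is precisely why the conclusion of Proposition~\ref{propnumflat} is numerical flatness rather than hermitian flatness, and it shows your route cannot be correct as stated.

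The paper circumvents this by inducting on the rank using an $A$-stable filtration $0\subset\mathcal{F}_1\subset\cdots\subset\mathcal{F}_m=\mathcal{F}$. Weak positivity and $c_1=0$ propagate to the top quotient $\mathcal{Q}:=\mathcal{F}_m/\mathcal{F}_{m-1}$ and to $\mathcal{F}_{m-1}$, so that $\mathcal{F}_{m-1}$ is numerically flat by the induction hypothesis, while $\mathcal{Q}^{\star\star}$, which is $A$-stable and reflexive with $c_1=0$ and $c_2\cdot A^{n-2}=0$, is hermitian flat by Bando--Siu. The vanishing of $c_2$ is obtained not via a Bogomolov--Gieseker integration-by-parts estimate as you sketch (that step in your write-up is in any case quite vague) but by restricting $\mathcal{Q}$ to a general complete intersection surface $S\subset Y_0$ and invoking the surface case \cite[Cor.~2.12]{CH}: a numerically flat bundle on a surface has vanishing second Chern class. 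Finally, the extension across the codimension-three locus is carried out at the level of \emph{extension classes} via Lemma~\ref{lemm-elementary}, which produces a bundle on $Y$ agreeing with $\mathcal{F}$ on $Y_0$, hence equal to $\mathcal{F}$ by reflexivity. Your proposal can likely be repaired along these lines, provided you replace ``polystable decomposition'' with ``stable filtration and induction on rank'' and apply Bando--Siu only to the stable graded pieces.
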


\begin{proof}
The proof is given by the induction on the rank of $\mathcal{F}$. 
Reflexive sheaves of rank one are always line bundles 
(for example see \cite[Prop 1.9]{Har}), 
and thus the conclusion is obvious in the case of rank one. 

Let $Y_0$ be the maximal locally free locus of $\mathcal{F}$. 
Note that $\codim (Y \setminus Y_0) \geq 3$. 
The main idea of the proof is that, after we take a stable filtration of $\mathcal{F}$, 
we extend quotient sheaves to vector bundles on $Y$, 
by applying \cite{CH} and \cite[Cor 3]{BS}.

To begin with, we fix an ample line bundle $A$ on $Y$ and  we take an $A$-stable filtration of $\mathcal{F}$:
$$
0 \rightarrow \mathcal{F}_1 \rightarrow \mathcal{F}_2 \rightarrow \cdots \rightarrow \mathcal{F}_m := \mathcal{F}.
$$
%
All the sheaves $\mathcal{F}_i$ can be assumed to be reflexive 
by taking the double dual if necessarily. 
We consider the following exact sequence of sheaves:
\begin{align}\label{exact}
0 \rightarrow \mathcal{F}_{m-1} \rightarrow \mathcal{F} \rightarrow 
\mathcal{Q}:=\mathcal{F}_{m}/\mathcal{F}_{m-1}  \rightarrow 0.
\end{align}
As  $\mathcal{F}_m= \mathcal{F}$ is weakly positively curved, 
the quotient $\mathcal{Q}:=\mathcal{F}_{m}/\mathcal{F}_{m-1}$ is also weakly positively curved. 
In particular, its first Chern class $c_1 (\mathcal{Q})$ is  pseudo-effective. 
On the other hand, we have 
$$
0=c_{1}(\mathcal{F})=c_1 (\mathcal{F}_{m-1}) +c_1 (\mathcal{Q})
$$
by the assumption. 
Together with the stability condition of the filtration, 
we can easily check that $c_1 (\mathcal{F}_{m-1})  =0$ and $c_1 (\mathcal{Q})=0$.

We first show that $\mathcal{F}_{m-1}$ is a vector bundle on $Y_0$ and 
that the morphism $\mathcal{F}_{m-1} \rightarrow \mathcal{F}$ is a bundle morphism on $Y_0$, 
following the argument in \cite[Prop 1.16, Cor 1.19]{DPS94}. 
Thanks to \cite[Cor 1.19]{DPS94}, 
it is enough for this purpose to check that the induced morphism
$$
\det \mathcal{Q}^{\star}\to \Lambda^{p} \mathcal{F}^\star, 
$$
is an injective bundle morphism on $Y_0$, 
where $p$ is the rank of $\mathcal{Q}$. 
This morphism determines the section 
$$
\tau \in H^{0}(Y, \Lambda^{p} \mathcal{F}^\star\otimes \det \mathcal{Q}^{\star\star}). 
$$
By $c_{1}(\mathcal{Q})=0$ and the assumption for $\mathcal{F}$, 
it can be seen that  
$\Lambda^{p} \mathcal{F}\otimes \det \mathcal{Q}^{\star}$ is weakly positively curved. 
Then it can be shown that 
$\tau$ is non-vanishing on $Y_0$
by the same argument as in \cite[Prop 1.16]{DPS94}. 
This implies that $\mathcal{F}_{m-1}$ can be seen as a subbundle of $\mathcal{F}$
by the morphism $\mathcal{F}_{m-1} \rightarrow \mathcal{F}$ on $Y_0$. In particular, it follows that the quotient $\mathcal{Q}$ is also  a vector bundle on $Y_0$ from this observation.

On the other hand, we have the surjective bundle morphism 
$$
\Lambda^{r-s+1} \mathcal{F} \otimes \det \mathcal{Q}^\star\to \mathcal{F}_{m-1}
$$
on $Y_{0}$, 
where $r$ (resp. $s$) is the rank of $\mathcal{F}$ (resp. $\mathcal{F}_{m-1}$). 
It follows that 
$\Lambda^{r-s+1} \mathcal{F} \otimes \det \mathcal{Q}^\star$ is weakly positively curved 
from $c_{1}(\mathcal{Q})=0$ and the assumption for $\mathcal{F}$. 
Then the quotient metric on $\mathcal{F}_{m-1}$ is weakly positively curved on $Y_0$
and it can be extended to the maximal locally free locus of $\mathcal{F}_{m-1}$ 
by $\codim (Y \setminus Y_{0}) \geq 3$. 
Therefore we can conclude that $\mathcal{F}_{m-1}$ 
is a numerically flat vector bundle on $Y$ 
by the induction hypothesis.

We now prove that the double dual $\mathcal{Q}^{\star \star}$ of the quotient sheaf 
$\mathcal{\mathcal{Q}}=\mathcal{F}_{m} / \mathcal{F}_{m-1}$ 
is a hermitian flat vector bundle on $Y$.  
Let $S$ be a general surface constructed by the complete intersection 
of hypersurfaces in the complete linear system $|A|$. 
By $\codim (Y\setminus Y_0) \geq 3$, 
we may assume that $S$ is contained in $Y_0$. 
Then the restriction $\mathcal{Q} |_S$ is a weakly positively curved vector bundle 
with $c_{1}(\mathcal{Q}|_S)=0$, 
and thus $\mathcal{Q}|_S$ is numerically flat by \cite[Cor 2.12]{CH}.
Hence it follows that 
$$
\int_S c_2 (\mathcal{Q}|_S)=0
$$
since all the Chern classes of numerically flat vector bundles are zero. 
As a consequence, we can see that 
$$
c_2 (\mathcal{Q}^{\star\star}) \cdot c_1(A)^{n-2}=\int_S c_2 (\mathcal{Q}|_S) =0, 
$$
where $n$ is the dimension of $Y$. 
The double dual $\mathcal{Q}^{\star\star}$ is an $A$-stable reflexive sheaf 
satisfying that $c_1(\mathcal{Q}^{\star\star})=0$ and $c_2 (\mathcal{Q}^{\star\star})\cdot c_1(A)^{n-2}=0$, 
and thus it can be seen that $\mathcal{Q}^{\star \star}$ a hermitian flat vector bundle on $Y$, 
by applying Bando-Siu \cite[Cor 3]{BS} to $\mathcal{Q}^{\star \star}$.

Finally we prove that $\mathcal{F}$ is a vector bundle on $Y$. 
By the above argument, 
sequence (\ref{exact}) is an exact sequence of vector bundles on $Y_0$
and the quotient $\mathcal{Q}$ can be extended to a vector bundle $\mathcal{Q}^{\star \star}$ on $Y$. 
We remark that $\mathcal{Q}$ itself can not be expected to be a vector bundle on $Y$ 
since $\mathcal{Q}$ may not be reflexive. 
By applying Lemme \ref{lemm-elementary} to our case, 
it can be seen that the restriction map 
$$
H^1 (Y, \mathcal{Q}^\star \otimes \mathcal{F}_{m-1}) \rightarrow 
H^1 (Y_0, \mathcal{Q}^\star \otimes \mathcal{F}_{m-1}) 
$$ 
is an isomorphism by $\codim (Y \setminus Y_0) \geq 3$.
Hence the extension class in $H^1 (Y_0, \mathcal{Q}^\star \otimes \mathcal{F}_{m-1}) $
obtained from exact sequence (\ref{exact}) of vector bundles on $Y_0$ 
can be extended to the extension class in $H^1 (Y, \mathcal{Q}^\star \otimes \mathcal{F}_{m-1}) $. 
The extended class determines the vector bundle whose restriction to  $Y_0$  
corresponds to $\mathcal{F}$ by the construction. 
It follows that this vector bundle coincides with $\mathcal{F}$ on $Y$ 
since $\mathcal{F}$ is reflexive (see \cite[Prop 16]{Har}). 
\end{proof}

By using the notion of numerical flatness, 
we obtain a criteria for local triviality with respect to a log pair. 
The following criteria, 
which is essentially proved by the same argument as in \cite{Cao}, 
can be regarded as a pair version of \cite[Prop 2.8]{Cao}. 
For the reader's convenience and the completeness, 
we briefly recall the argument in \cite{Cao} and give a precise proof here.

\begin{prop}[{cf. \cite[Prop 2.8]{Cao}}]\label{isotri}
Let $p: X\rightarrow Y$ be a flat morphism with connection fiber 
between two compact K\"{a}hler manifolds and 
let $\Delta$ be an effective $p$-horizontal divisor on $X$. 
For a $p$-very ample line bundle $L$ on $X$, we set $E_m := p_\star ( m L)$. 
If $E_m$ is numerically flat for every $m \geq 1$ and  
$p_\star (m L \otimes \mathcal{I}_\Delta)$ is numerically flat for some $m$ large enough,
then the pair $(X,\Delta)$ is locally trivial with respect to $p$.

Moreover, for the universal cover $\pi: \widetilde{Y} \rightarrow Y$ of $Y$,  
the fiber product 
$(\widetilde{X}, \widetilde{\Delta}) := (X, \Delta) \times_Y \widetilde{Y}$ admits the following splitting
$$(\widetilde{X}, \widetilde{\Delta} )\cong  \widetilde{Y} \times (F, \Delta |_F),$$
where $F$ is a general fiber of $p$.
\end{prop}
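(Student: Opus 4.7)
The plan is to realize $X$ as the relative Proj of the graded algebra $\mathcal{R} := \bigoplus_{m \geq 0} E_m$ over $Y$, transfer the construction to the universal cover $\widetilde{Y}$, and use the Demailly--Peternell--Schneider filtration (Theorem \ref{thm-numflat}) to show that the whole graded algebra, and the sub-module defining $\Delta$, become constant after pullback to $\widetilde{Y}$. Local triviality over $Y$ will then follow by taking small $\pi_1(Y)$-invariant opens.

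\textbf{Step 1 (Relative Proj setup).} Since $L$ is $p$-very ample, the surjection $p^\star E_1 \to L$ gives a closed embedding $X \hookrightarrow \mathbb{P}_Y(E_1)$ over $Y$, and more structurally $X \cong \mathrm{Proj}_Y(\mathcal{R})$, with the relations encoded by the kernels of the multiplication maps $\mu_{a,b} : E_a \otimes E_b \to E_{a+b}$. The divisor $\Delta$ corresponds at graded level $m$ to the inclusion of sub-sheaves $\mathcal{F}_m := p_\star(mL \otimes \mathcal{I}_\Delta) \hookrightarrow E_m$. Thus the pair $(X, \Delta)$ is entirely determined, over $Y$, by the graded $\mathcal{O}_Y$-algebra $\mathcal{R}$ together with its graded ideal $\bigoplus_m \mathcal{F}_m$.

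\textbf{Step 2 (Trivialization on $\widetilde{Y}$).} By Theorem \ref{thm-numflat}, each numerically flat $E_m$ admits a filtration by hermitian flat sub-bundles, and the same holds for $\mathcal{F}_m$ for the large $m$ under consideration. Hermitian flat bundles correspond to unitary representations of $\pi_1(Y)$, so every filtration piece becomes holomorphically trivial after pullback along $\pi : \widetilde{Y} \to Y$. Using that morphisms between hermitian flat bundles are themselves flat, and that the multiplication maps $\mu_{a,b}$ and the inclusions $\mathcal{F}_m \hookrightarrow E_m$ are compatible with the filtrations, one shows that $\widetilde{\mathcal{R}} := \pi^\star \mathcal{R}$ is isomorphic to a \emph{constant} graded $\mathcal{O}_{\widetilde{Y}}$-algebra $V_\bullet \otimes \mathcal{O}_{\widetilde{Y}}$, and that $\pi^\star \mathcal{F}_m$ corresponds to a constant subspace $W_m \otimes \mathcal{O}_{\widetilde{Y}} \subset V_m \otimes \mathcal{O}_{\widetilde{Y}}$. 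Taking relative Proj then yields
\[
\widetilde{X} \cong \mathrm{Proj}_{\widetilde{Y}}(V_\bullet \otimes \mathcal{O}_{\widetilde{Y}}) \cong \widetilde{Y} \times F,
\qquad
\widetilde{\Delta} \cong \widetilde{Y} \times \Delta|_F,
\]
for a general fiber $F$ of $p$. Local triviality of $(X, \Delta)$ over $Y$ follows by descending this product structure over small open subsets $U \subset Y$ for which $\pi^{-1}(U)$ is a disjoint union of copies of $U$.

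\textbf{Main obstacle.} The delicate point is Step 2: a numerically flat vector bundle pulled back to $\widetilde{Y}$ is a priori only \emph{filtered} by trivial bundles, and the relevant extension classes live in $H^1(\widetilde{Y}, \mathcal{O})$, which need not vanish (since $\widetilde{Y}$ is merely a simply connected K\"ahler manifold, not Stein). Ruling out non-trivial extensions, and in fact showing that $\widetilde{\mathcal{R}}$ is constant as an \emph{algebra} with the additional data of the sub-module $\pi^\star \mathcal{F}_\bullet$, requires exploiting the full compatibility between numerical flatness, the multiplicative structure of $\mathcal{R}$, and the $\pi_1(Y)$-equivariance of the whole picture. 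This is the technical core of \cite[Prop 2.8]{Cao}, and the pair-theoretic extension here hinges on checking that the same trivialization argument applies simultaneously to the filtration of $E_m$ and to the filtration of $\mathcal{F}_m$.
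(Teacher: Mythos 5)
Your high-level strategy is the same as the paper's (trivialize the relative embedding on the universal cover and show the defining ideals are constant), and you correctly identify the crux of the matter — but you leave that crux unresolved, so there is a genuine gap. You observe that, using only the Demailly--Peternell--Schneider filtration, the pullbacks $\pi^\star E_m$ and $\pi^\star\mathcal{F}_m$ are merely \emph{filtered} by trivial bundles over $\widetilde{Y}$, with extension classes that need not vanish; you then appeal to \cite[Prop 2.8]{Cao} to dispose of this difficulty rather than disposing of it yourself. That is exactly the step one must actually carry out, and it cannot be absorbed into ``$\pi_1(Y)$-equivariance of the whole picture'' without further input.

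The input the paper uses, and which your outline is missing, is the stronger structural fact that a numerically flat vector bundle on a compact K\"ahler manifold is not just filtered by hermitian flat subbundles but is itself a \emph{local system}: it carries a flat holomorphic connection. (This is what the line ``$E_1$ is a local system as $E_1$ is numerically flat'' encodes.) Granted this, the extension-class obstacle you worry about simply does not arise: $\pi^\star E_1$ is a trivial flat bundle on the simply connected $\widetilde{Y}$, with a global frame of $D_{E_1}$-parallel sections $e_1,\dots,e_n$; likewise $F_m := f_\star(\mathcal{I}_X\otimes\mathcal{O}_{\mathbb{P}(E_1)}(m))$ and the analogous sheaf for $\mathcal{I}_\Delta$ are numerically flat (as extensions/kernels of numerically flat bundles in the exact sequences \eqref{exactseq1pr}, \eqref{exactseq1prpair2}, \eqref{exactseq1prpair}) and hence also carry flat connections with global flat frames $s_1,\dots,s_t$ over $\widetilde{Y}$. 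The remaining step, which replaces your appeal to ``compatibility with filtrations,'' is \cite[Lemma 4.3.3]{Cao13}: the inclusion $F_m\hookrightarrow \Sym^m E_1$ takes $D_{F_m}$-flat sections to $D_{\Sym^m E_1}$-flat ones, so that each $\varphi(s_i)$ is a \emph{constant}-coefficient polynomial in the $e_j$. The defining equations of $\widetilde{X}$ and of $\widetilde{\Delta}$ inside $\mathbb{P}^{n-1}\times\widetilde{Y}$ are therefore independent of the base point, which gives both the local triviality over $Y$ and the product splitting. In short: the missing ingredient is not more cleverness about graded algebras (your relative Proj packaging is fine), but the passage from ``filtered by flat'' to ``flat'' afforded by the local-system property of numerically flat bundles on compact K\"ahler manifolds.
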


\begin{proof}
As $L$ is $p$-very ample, 
we have the following $p$-relative embedding: 
$$
\xymatrix{
X \ar[rd]_p \ar@{^{(}->}[rr]^j & &\mathbb{P} (E_{1}) \ar[ld]^f\\
& Y}
$$ 
and also we obtain $L = j^\star \mathcal{O}_{\mathbb{P} ( E_1 )} (1)$. 
For $m$ large enough, we have the exact sequence 
\begin{equation}\label{exactseq1pr}
0 \rightarrow f_\star (\mathcal{O} _{\mathbb{P} (E_1)} (m) \otimes \mathcal{I}_X ) \rightarrow 
f_\star ( \mathcal{O} _{\mathbb{P} (E_1)} (m)) \rightarrow p_\star (m L) \rightarrow 0,  
\end{equation}
where $\mathcal{I}_X $ is the ideal sheaf defined by $X \subset \mathbb{P} (E_{1})$.
The vector bundle $E_1$ is a local system as $E_1$ is numerically flat. 
Let $D_{E_1}$ be the flat connection with respect to this local system. 
Put $n:=\rank E_1$. 
Since $\widetilde{Y}$ is simply connected, 
the pull-back $\pi^\star E_1$ is a trivial vector bundle on $\widetilde{Y}$, 
and thus we can take global flat sections (with respect to $D_{E_1}$) 
$$
\{ e_1, e_2, \cdots, e_n \} \subset H^0 (\widetilde{Y}, \pi^\star E_1)
$$ 
such that $\{ e_1, e_2, \cdots, e_n \} $ generates $\pi^\star E_1$.

\smallskip

Set $ F_m := f_\star (\mathcal{O} _{\mathbb{P} (E_1)} (m) \otimes \mathcal{I}_X)$. 
Then $F_m$ is also numerically flat, since 
both $f_\star ( \mathcal{O} _{\mathbb{P} (E_1)} (m)) =\Sym^m E_1$ and $p_\star (m L)$ 
are numerically flat by the assumptions. 
Hence we can take the flat connection $D_{F_m}$ on the local system $F_m$. 
Then, by the same way as above, we can see that 
$\pi^\star F_m$ is a trivial vector bundle 
and we can take global flat sections (with respect to $D_{F_m}$) 
$$\{ s_1, s_2, \cdots, s_t \} \subset H^0 (\widetilde{Y}, \pi^\star F_m)$$ 
such that $\{ s_1, s_2, \cdots, s_t \} $ generates $\pi^\star F_m$, where $t$ is the rank of $F_m$.

Let $$
\varphi: \pi^\star f_\star (\mathcal{I}_X \otimes \mathcal{O} _{\mathbb{P} (E_1)} (m)) \rightarrow 
\pi^\star f_\star ( \mathcal{O} _{\mathbb{P} (E_1)} (m))=\pi^\star \Sym^m E_1
$$ 
be the inclusion induced by \eqref{exactseq1pr}.
Let $D_{\Sym^m E_1}$ be the flat connection on $\Sym^m \pi^\star E_1$ induced by $D_{E_1}$.
Thanks to \cite[Lemma 4.3.3]{Cao13}, 
we know that  
the section $\varphi (s_i)$ is flat with respect to the connection $D_{\Sym^m E_1}$ 
for every $i$.
In particular, for every $i$, we can find constants $a_{i, \alpha}$ such that 
$$\varphi (s_i)= \sum_{\alpha= (\alpha_1, \cdots, \alpha_n), |\alpha|=m} a_{i, \alpha} \cdot e_1 ^{\alpha_1} e_2^{\alpha_2}\cdots e_n^{\alpha_n}.$$
In other words, the $p$-relative embedding of $\widetilde{X}$ 
into $\mathbb{P}^{n-1} \times   \widetilde{Y}$:
$$
\xymatrix{
\widetilde{X} \ar[rd]_{\tilde p} \ar@{^>}[rr] & &\mathbb{P}^{n-1} \times   
\widetilde{Y} \ar[ld]^{\rm{pr}_2}\\
& \widetilde{Y}}
$$ 
is defined by the polynomials $\varphi (s_i)$ 
whose coefficients are independent of $y \in \widetilde{Y}$.
Then $p$ is locally trivial and we obtain the splitting
$\widetilde{X}\simeq \widetilde{Y} \times F$,
where $F$ is the general fiber of $p$. 

\smallskip

To identify the pairs, let $p_\Delta : \Delta \rightarrow Y$ be 
the restriction of $p$ to $\Delta$. For $m$ large enough, we have the exact sequence
\begin{equation}\label{exactseq1prpair2}
 0 \rightarrow  p_\star (mL \otimes \mathcal{I}_\Delta) \rightarrow p_\star (m L) \rightarrow   (p_\Delta)_\star (m L |_\Delta) \rightarrow  0.
\end{equation}
Since the first two terms are numerically flat, 
the third term $(p_\Delta)_\star (m L |_\Delta)$ is also numerically flat.
By taking a sufficiently large  $m$, 
the sheaf $(p_\Delta)_\star (m L |_\Delta)$ fits in another the exact sequence 
\begin{equation}\label{exactseq1prpair}
0 \rightarrow f_\star (\mathcal{I}_\Delta \otimes \mathcal{O} _{\mathbb{P} (E_1)} (m)) \rightarrow 
f_\star ( \mathcal{O} _{\mathbb{P} (E_1)} (m)) \rightarrow (p_\Delta)_\star (m L |_\Delta) \rightarrow 0. 
\end{equation}
As we proved that $(p_\Delta)_\star (m L |_\Delta)$ is numerically flat, 
the first term $f_\star (\mathcal{I}_\Delta \otimes \mathcal{O} _{\mathbb{P} (E_1)} (m))$ is numerically flat.

By using the same argument for 
$f_\star (\mathcal{I}_X \otimes \mathcal{O} _{\mathbb{P} (E_1)} (m))$ as above, 
we know that the $p$-relative embedding of $\widetilde{\Delta}$ in $\mathbb{P}^{n-1} \times   \widetilde{Y}$ 
is defined by the polynomials whose coefficients are independent of $y \in \widetilde{Y}$.  
As a consequence,  we can conclude that 
$p: (X, \Delta) \rightarrow Y$ 
is locally trivial and we have the splitting
$(\widetilde{X}, \widetilde{\Delta} )\cong  \widetilde{Y} \times ( F, \Delta |_F)$ 
for a general fiber $F$ of $p$. 
\end{proof}


\subsection{Sections of certain flat vector bundles}\label{Sec2-3}
In this subsection, 
we prove the following proposition, 
which is needed to estimate the dimension of global sections of certain flat vector bundles 
in the proof of Theorem \ref{thm-main}.

\begin{prop}\label{prop-rank}
Let $\pi: \Gamma \to X$ be a birational morphism from 
a smooth projective variety $\Gamma$ to a 
$($not necessarily smooth$)$ projective variety $X$, 
and let $V_m$ be a torsion free sheaf on $\Gamma$. 
Assume that the restriction $V_m |_{{C}}$ of $V_m$ to a smooth curve ${C}$ 
is a numerically flat vector bundle on ${C}$, 
where ${C}$ is a complete intersection 
$$
{C}:={H}_{1} \cap {H}_{2} \cap \dots \cap {H}_{\dim X -1}
$$  
constructed by a general member ${H}_{i}$ in the free linear system $|\pi^{\star}A|$ 
for an arbitrary very ample line bundle $A$  on $X$. 
Then, for any line bundle $L$ on $\Gamma$, 
there exists a constant $C_L$ depending only on $L$ and $X$ $($independent of $V_m$$)$ such that 
$$
h^{0}(X, V_m \otimes L) \leq C_L \cdot \rank V_m. 
$$
\end{prop}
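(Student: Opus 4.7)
The plan is to prove the bound by descending induction on dimension via successive hyperplane sections. Set $W_0 := \Gamma$ and $W_k := H_1 \cap \cdots \cap H_k$, which is smooth by Bertini applied to the base-point-free linear system $|\pi^{\star}A|$, with $W_{n-1} = C$ and $n := \dim X$. I will establish by descending induction on $k$ the following uniform statement: for every line bundle $M$ on $\Gamma$,
\[
h^0(W_k, (V_m \otimes M)|_{W_k}) \leq K_{k, M} \cdot \rank V_m,
\]
with $K_{k, M}$ depending only on $k$, $M$, $X$, and the chosen polarization data, but \emph{not} on $V_m$. The case $k = 0$ and $M = L$ is exactly the proposition.

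For the base case $k = n-1$, Theorem \ref{thm-numflat} gives a filtration of $V_m|_C$ by hermitian flat (hence degree-zero) line bundles on the smooth curve $C$. Twisting by $M|_C$ and using the elementary bound $h^0(\mathcal{N}) \leq \max(0, \deg \mathcal{N} + 1)$ for a line bundle $\mathcal{N}$ on a smooth curve, the long exact sequences yield $h^0(C, (V_m \otimes M)|_C) \leq \rank V_m \cdot \max(0, \deg M|_C + 1)$. For the inductive step, torsion-freeness of $V_m$ (together with Bertini-genericity of the $H_i$) ensures that
\[
0 \to (V_m \otimes M(-H_{k+1}))|_{W_k} \to (V_m \otimes M)|_{W_k} \to (V_m \otimes M)|_{W_{k+1}} \to 0
\]
is an exact sequence of torsion-free sheaves on $W_k$. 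Iterating $N$ times in the $(-H_{k+1})$-direction gives
\[
h^0(W_k, (V_m \otimes M)|_{W_k}) \leq h^0(W_k, (V_m \otimes M(-N H_{k+1}))|_{W_k}) + \sum_{j=0}^{N-1} h^0(W_{k+1}, (V_m \otimes M(-j H_{k+1}))|_{W_{k+1}}),
\]
where each summand in the finite sum is controlled by the inductive hypothesis applied to the line bundle $M - j H_{k+1}$.

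The main obstacle---and the heart of the argument---is to kill the leading term: for $N$ sufficiently large (depending only on $M$, and independent of $V_m$), the vanishing $h^0(W_k, (V_m \otimes M(-N H_{k+1}))|_{W_k}) = 0$ must hold. My plan for this is the following. A nonzero global section would provide an injection $\mathcal{O}_{W_k} \hookrightarrow (V_m \otimes M(-N H_{k+1}))|_{W_k}$ of the torsion-free target, and for a generic $C \subset W_k$ in the family this section restricts non-trivially to $C$. However, $V_m|_C \otimes (M - N H_{k+1})|_C$ inherits a filtration whose successive line-bundle quotients all have degree $\deg M|_C - N \cdot (\pi^{\star}A)^n$; choosing $N > \deg M|_C / (\pi^{\star}A)^n$ makes this negative, so the twisted sheaf admits no nonzero section on $C$, a contradiction. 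Making this rigorous relies on routine Bertini-genericity statements: that successive restrictions $V_m|_{W_k}$ remain torsion-free, that a generic $C$ avoids the non-locally-free locus of $V_m$, and that restriction to a generic $C$ of a nonzero global section stays nonzero. None of these is difficult individually, but together they constitute the technical core of the argument.
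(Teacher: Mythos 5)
Your proof is correct in substance, and it gets to the same essential mechanism as the paper --- namely, that the restriction of $V_m$ to a curve $C$ is numerically flat (hence semistable of degree~$0$), so any twist by a line bundle of negative degree on $C$ has no sections. But the bookkeeping around that mechanism is genuinely different. You slice by hyperplanes from the single series $|\pi^\star A|$ throughout, and at each dimension you kill the leading term by twisting down $N$ steps inside that \emph{same} series, with $N$ chosen to make the degree on a curve negative; the constant you obtain is a finite nested sum of the base-case bounds. The paper instead fixes a second very ample line bundle $B$ on $X$ (large enough that $L^\star\otimes\pi^\star B$ is big), proves $H^0(\Gamma, V_m\otimes L\otimes\pi^\star B^\star)=0$ by restricting to a curve and invoking Lemma~\ref{lemma-vanishing}/Serre duality, and then slices down by $|\pi^\star B|$-hypersurfaces --- one hypersurface per dimension, each time twisting by $-\pi^\star B$ rather than iterating inside $|\pi^\star A|$ --- so that the final constant is simply the length of the zero-dimensional complete intersection, $B^n$; this uses the ``arbitrary very ample $A$'' clause of the hypothesis to re-apply numerical flatness for $|\pi^\star B|$-complete-intersection curves at every stage. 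Both routes are valid; yours avoids introducing a second linear system but pays with a more elaborate induction, while the paper's is conceptually a single twist followed by a cascade of restrictions.

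One small slip to flag: Theorem~\ref{thm-numflat} does \emph{not} give a filtration of $V_m|_C$ by hermitian flat \emph{line} bundles; the successive quotients $E_k/E_{k-1}$ are hermitian flat \emph{vector} bundles, possibly of higher rank (an irreducible unitary representation of $\pi_1(C)$ of rank $\geq 2$ does not split into degree-zero line bundles). This does not harm your argument --- a hermitian flat vector bundle $F$ of rank $\rho$ on a curve is semistable of degree~$0$, so $h^0(C, F\otimes N)=0$ whenever $\deg N<0$, and by twisting by $\mathcal O_C(-p)$ finitely many times one still gets $h^0(C,F\otimes N)\leq \rho\cdot\max(0,\lfloor \deg N\rfloor+1)$ --- but the base case should be phrased in terms of semistability, or simply by running your inductive-step argument one more time until the complete intersection is a finite set of points (as the paper does). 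You also correctly identify that the remaining work is Bertini-style genericity (torsion-freeness under restriction, nonvanishing of a section on a general slice, and that a complete intersection inside $W_k$ obtained by further general members of $|\pi^\star A|$ is a general complete intersection in $\Gamma$ in the sense of the hypothesis); these are indeed routine but need to be invoked consistently at each stage.
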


Before proving Proposition \ref{prop-rank}, 
we first give a proof for the following lemma, 
which can be regarded as a vanishing theorem of Fujita type 
for numerically flat vector bundles. 

\begin{lemm}\label{lemma-vanishing}
Let $A$ be an ample line bundle on a smooth projective variety $X$ 
and $E$ be a numerically flat vector bundle on $X$. 
Then we have 
$$
H^{q}(X, K_{X}\otimes E \otimes A)=0
$$
for any $q>0$. 
\end{lemm}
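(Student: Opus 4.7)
The plan is to reduce the statement to the case where $E$ is a hermitian flat vector bundle by means of the Demailly-Peternell-Schneider filtration (Theorem \ref{thm-numflat}), and then to conclude by Nakano's vanishing theorem applied to $E \otimes A$ equipped with a suitable positively curved metric.

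More concretely, the first step is to invoke Theorem \ref{thm-numflat} to produce a filtration by subbundles
$$
0 = E_0 \subset E_1 \subset \cdots \subset E_p = E
$$
whose successive quotients $F_k := E_k/E_{k-1}$ are hermitian flat vector bundles on $X$. Tensoring each short exact sequence $0 \to E_{k-1} \to E_k \to F_k \to 0$ by the locally free sheaf $K_X \otimes A$ and passing to the associated long exact sequence of cohomology, an induction on $k$ reduces the problem to proving the vanishing in the base case where $E$ itself is a hermitian flat vector bundle.

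For the base case, I would equip $E$ with a smooth hermitian metric $h_E$ of vanishing curvature and choose a smooth hermitian metric $h_A$ on $A$ whose curvature form $\sqrt{-1}\Theta_{h_A}(A)$ is a K\"ahler form on $X$, which is possible since $A$ is ample. The tensor product metric $h_E \otimes h_A$ on $E \otimes A$ then has curvature
$$
\sqrt{-1}\Theta_{h_E \otimes h_A}(E \otimes A) \;=\; \sqrt{-1}\Theta_{h_A}(A) \otimes {\rm Id}_E,
$$
which is Nakano-positive as a Hermitian form on $T_X \otimes (E \otimes A)$ because a positive $(1,1)$-form tensored with the identity on a hermitian vector space is positive on the whole tensor product. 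Applying Nakano's classical vanishing theorem to $E \otimes A$ then yields $H^q(X, K_X \otimes E \otimes A) = 0$ for every $q > 0$.

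There is essentially no serious obstacle here: the two ingredients, namely the filtration theorem \ref{thm-numflat} and Nakano vanishing, are both classical, and the verification that the curvature of the tensor of a flat metric with a K\"ahler metric is Nakano-positive is routine. The only point worth double-checking is that the cohomological reduction step works uniformly in the filtration, which is straightforward since the twist $K_X \otimes A$ is locally free and so preserves exactness of short exact sequences of vector bundles.
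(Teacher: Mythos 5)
Your proof is correct and follows essentially the same route as the paper: invoke the Demailly--Peternell--Schneider filtration (Theorem \ref{thm-numflat}) to reduce to the hermitian flat case, observe that tensoring a flat metric with a positively curved metric on the ample line bundle yields a Nakano-positive metric on $E \otimes A$, and then propagate the vanishing up the filtration via the long exact cohomology sequences. The only cosmetic difference is that the paper records the inequalities $h^q(X, K_X\otimes E_k\otimes A) \le h^q(X, K_X\otimes E_{k-1}\otimes A)$ explicitly rather than phrasing the induction in words.
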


\begin{rem}\label{rem-vanihsing}
It is natural to ask whether or not 
the same conclusion holds under the weaker assumption that $E$ is nef, 
since it is true in the case where $E$ is a line bundle by the Kodaira vanishing theorem. 
When $X$ is a smooth curve, we can easily check it. 
Indeed, there is a finite morphism $p: X' \to X$ such that 
$p^\star(E\otimes A)$ is the quotient of the direct sum $\oplus_i B_i$ of 
ample line bundles $B_i$  since $E\otimes A$ is an ample vector bundle (see \cite{CF90}). 
Then we can easily check that  
\begin{align*}
0=h^{1}(X', K_{X'}\otimes p^\star(E\otimes A))
&=h^{0}(X', p^\star(E^\star\otimes A^\star))\\
&=h^{0}(X, E^\star\otimes A^\star\otimes p_\star \mathcal{O}_{X'})\\
&\geq h^{0}(X, E^\star\otimes A^\star) \\
&\geq h^{1}(X, K_X \otimes E \otimes A)
\end{align*}
by the Serre duality, the projection formula, and the injectivity $\mathcal{O}_X \hookrightarrow p_\star \mathcal{O}_{X'}$. 

\end{rem}

\begin{proof}
We first observe that the conclusion is obvious if $E$ is a hermitian flat vector bundle  
(that is, it admits a smooth hermitian metric such that $\sqrt{-1}\Theta_h(E)=0$). 
Indeed, the vector bundle $E \otimes A$ is Nakano positive 
since the metric $h \otimes h_A$ satisfies that 
$$
\sqrt{-1}\Theta_{h \otimes h_A}(E\otimes A)= {\rm{id}}_E \otimes \sqrt{-1}\Theta_h(A) >_{\rm{Nak}} 0, 
$$
where $h_A$ is a smooth hermitian metric on $A$ with positive curvature. 
Therefore, in the case of $E$ being hermitian flat, 
we can obtain the desired conclusion by Nakano's vanishing theorem. 

Now we treat the general case of $E$ being numerically flat. 
Then, by Theorem \ref{thm-numflat},  
there exists a filtration of $E$ by subbundles $\{E_k\}_{k=0}^{p}$
$$
0=:E_0 \subset E_1 \subset \dots \subset E_{p-1} \subset E_p:=E 
$$
such that the quotient bundle $E_k/E_{k-1}$ is a hermitian flat vector bundle. 
It follows that 
$$
H^{q}(X, K_X \otimes E_k/E_{k-1} \otimes A)=0
$$
for any $q \geq 1$ and $1 \leq k \leq p$ from the first observation. 
Hence we can easily check that 
$$
h^{q}(X, K_X \otimes E_{k} \otimes A) \leq 
h^{q}(X, K_X \otimes E_{k-1} \otimes A) 
$$
by the long exact sequence obtained from the exact sequence 
$$
0 \to 
K_X \otimes E_{k-1} \otimes A \to 
K_X \otimes E_{k} \otimes A \to 
K_X \otimes E_k/E_{k-1} \otimes A \to 0. 
$$
The above inequalities and the hermitian flatness of $E_1$ yield  
$$
h^{q}(X, K_X \otimes E \otimes A)= h^{q}(X, K_X \otimes E_{p} \otimes A) \leq 
h^{q}(X, K_X \otimes E_{1} \otimes A)=0. 
$$
This is the desired conclusion. 
\end{proof}

%

\begin{proof}[Proof of Proposition \ref{prop-rank}]
For a given line bundle $L$ on $\Gamma$, we take a sufficiently ample line bundle $B$ on $X$ such that 
$L^\star \otimes \pi^\star B$ is a big line bundle on $\Gamma$. 
We first show that 
$$
H^0(\Gamma, V_m \otimes L \otimes \pi^\star B^\star )=0
$$
by contradiction. 
For a general hyperplane ${H}$ in the free linear system $|\pi^\star  A|$ and 
a non-zero section 
$$
t\in H^0 (\Gamma, V_m \otimes L \otimes \pi^\star B^\star),$$ 
it is easy to see that 
the restriction $t|_{{H}}$ is a non-zero section of 
$V_m \otimes L \otimes \pi^\star B^\star|_{{H}}$ 
and that the restriction $L^\star  \otimes \pi^\star B |_{{H}}$ is a big line bundle on ${H}$.  
By repeating this process for general hypersurfaces in $|\pi^\star  A|$, 
we can construct a smooth curve ${C}$ with the following properties\,$:$
\vspace{0.1cm}\\
\quad $\bullet$ ${C}$ is a complete intersection constructed by general members in $|\pi^\star  A|$. \\
\quad $\bullet$ $t|_{{C}}$ is a non-zero section of 
$V_m \otimes L \otimes \pi^\star B^\star|_{{C}}$. \\
\quad $\bullet$ $L^\star  \otimes \pi^\star B |_{{C}}$ is still big (that is, ample) 
on ${C}$. 
\vspace{0.1cm}\\
Then, by Lemma \ref{lemma-vanishing} and the Serre duality, 
we have 
$$
h^{0}({C}, V_m \otimes L \otimes \pi^\star B^\star |_{{C}})
=h^{1}({C}, K_C \otimes (V_m|_{{C}})^\star  \otimes L^\star  \otimes \pi^\star B|_{{C}})
=0. 
$$
Here we used the assumption that $V_m|_{{C}}$ is numerically flat. 
This contradicts the existence of the non-zero section $t|_{{C}}$. 

Now we consider a general hypersurface ${H}$ in $|\pi^\star B|$ (not $|\pi^\star A|$). 
It follows that the following sequence 
$$
0 \to 
V_m \otimes L \otimes \pi^\star B^\star  \to 
V_m \otimes L  \to 
V_m \otimes L |_{{H}} \to 0. 
$$
is exact since $V_{m}$ is torsion free. 
By the first half argument and the induced long exact sequence, 
we obtain  
$$
h^{0}(X, V_m \otimes L) \leq 
h^{0}({H}, V_m \otimes L |_{{H}}). 
$$
We may assume that 
$L^\star  \otimes \pi^\star  B |_{{H}}$ is a big line bundle on ${H}$, 
and thus we obtain 
$$
h^{0}(X, V_m \otimes L) \leq 
h^{0}({H}, V_m \otimes L |_{{H}}) \leq 
h^{0}({H} \cap {H'}, V_m \otimes L |_{{H}\cap {H'}})
$$
for a general hypersurface ${H}$ in $|\pi^\star B|$ 
by the same argument as above. 
By repeating this process,  
it can be shown that  
$$
h^{0}(X, V_m \otimes L) \leq 
h^{0}({C}, V_m \otimes L |_{{C}})  
\leq h^{0}(\Sigma, V_m \otimes L |_{\Sigma})= \sharp \Sigma \cdot \rank V_m. 
$$
Here ${C}$ is a curve constructed by 
a general complete intersection of members in $|\pi^\star B|$ and 
$\Sigma$ is the set of the reduced points defined by ${C} \cap H$, 
where ${H}$ is a general hypersurface in $|\pi^\star B|$. 
The cardinality $\sharp \Sigma$ of $\Sigma$ is equal to the self intersection number of $\pi^\star B$, 
and thus it depends only on $B$ (but not $V_m$). 
This finishes the proof. 
\end{proof}

In the rest of this subsection, 
we observe the following famous example 
(see \cite[Example 1.7]{DPS94} for more details),
which tells us that the equality in Theorem \ref{thm-main} does not hold for the Kodaira dimension  
even when $X$ is smooth and $\Delta$ is the zero divisor. 

\begin{ex}\label{exa}
Let $Y$ be an elliptic curve and $E$ be the (uniquely determined) 
vector bundle $E$ on $Y$ having the non-splitting exact sequence  
$$
0 \to \mathcal{O}_Y \to E \to  \mathcal{O}_Y \to 0. 
$$
We consider the projective space bundle $\varphi: X:=\mathbb{P}(E) \to Y$. 
Then it is easy to see that the anti-canonical divisor $-K_X$ is nef (but not semi-ample). 
Also it can be seen that 
\begin{align*}
\kappa(-K_X)=0 < 1=\kappa(-K_{X_y}) \text{ and }
\nd(-K_X) = \nd(-K_{X_y})=1.  
\end{align*}
Here $X_y$ is a fiber of the MRC fibration $\varphi: X \to Y$.

Moreover it can be shown that $E$ is numerically flat, but not hermitian flat. 
The direct image sheaves $V_m$ defined in Section \ref{Sec3} 
are calculated by 
$$
V_{m}=\varphi_\star(-mK_{X/Y})=\varphi_\star(-mK_{X})={\rm{S}}^{m}E 
$$
in this example. 
This example says that we actually need to treat numerically flat 
(not hermitian flat) vector bundles. 
\end{ex}

\section{Positivity of direct images of nef relative anti-canonical divisors}\label{Sec3}
In this section, we are always in the following setting. 
The special case in the following setting will be discussed in Section \ref{Sec4}.  

\begin{set}\label{setting}
Let $\psi: X \dashrightarrow Y$ be an almost holomorphic map 
from a normal projective variety $X$ to a smooth projective variety $Y$.
We choose a desingularization $\holom{\pi}{\Gamma}{X}$ such that 
it is a resolution of the indeterminacy locus of $\psi$: 
\begin{align}\label{diagram}
\xymatrix{
\Gamma \ar[rd]_\varphi  \ar[rr]^\pi & & X \ar@{-->}[ld]^\psi
\\
& Y  &}
\end{align}
We assume in this setting that there exists an effective klt $\mathbb{Q}$-divisor $D$ on $\Gamma$ such that 
\begin{equation}\label{mainassump}
-\pi_\star (c_1 (K_{\Gamma/Y} +D)) \text{ is a nef $\mathbb{Q}$-divisor}. 
\end{equation}
To begin with, let us fix some notations. 
Let $E$ be the $\pi$-exceptional locus, namely, we denote the notation $E$ by 
$$\text{
$E:=$the reduced divisor whose support coincides with the $\pi$-exceptional locus.
}
$$
Thanks to \eqref{mainassump}, there exists some $\pi$-exceptional $\mathbb{Q}$-divisor $E''$ such that
\begin{equation}\label{nefconddd}
-(K_{\Gamma/Y} + D) + E''  \text{ is a nef $\mathbb{Q}$-divisor}.
\end{equation}
Note that if $X$ is smooth, then $\varphi (E) \subsetneq Y$. However, in general, it is possible that 
$$\varphi (E) = \varphi (E'')=Y.$$
Now we choose a sufficiently ample line bundle $A$ on $\Gamma$ satisfying that  
the morphism 
\begin{equation}\label{surgene}
\mbox{\rm Sym}^p H^0 (\Gamma_y, A|_{\Gamma_y}) \rightarrow H^0 (\Gamma_y, p A|_{\Gamma_y})
\end{equation}
is surjective for every $p \in \mathbb{Z}_+$ and for a general fiber $\Gamma _y$. 

Let $Y_0 \subset Y$ be the largest Zariski open subset with the following properties\,$:$
\vspace{0.1cm}\\
\qquad $\bullet$  The induced morphism $\Gamma_0 := \fiber {\varphi}{Y_0} \rightarrow Y_0$ is equi-dimensional.\\
\qquad $\bullet$  The pull-back $\varphi^\star B$ is not contained in $E$ 
for any prime divisor $B \subset Y_0$. 
\end{set}
The following result proved in \cite{Zha05} is one of key propositions. 

\begin{theo}[{\cite[Main thm]{Zha05}}]\label{prop-fun}
Under setting $($\ref{setting}$)$, 
we further assume that $\psi:X \dashrightarrow Y$ is an MRC fibration. 
Then the followings holds\,$:$
\begin{enumerate}
\item Every irreducible component of $\pi(\fiber {\varphi}{Y \setminus Y_0})$ has codimension at least two. In particular, every $\varphi$-exceptional divisor is $\pi$-exceptional.

\item Let  $\varphi^\star B = \sum_{j=1}^l m_j B_j$ be the irreducible decomposition for a prime divisor $B$ on $Y_0$. If $m_j>1$, then the divisor $B_j$ is $\pi$-exceptional. 
\end{enumerate}
\end{theo}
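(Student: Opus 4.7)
The plan is to follow Zhang's strategy from \cite{Zha05}, exploiting the tension between two kinds of positivity: the almost-nefness of $-(K_{\Gamma/Y}+D)$ encoded in \eqref{nefconddd}, and the abundant supply of rational curves provided by the rational connectedness of the general fibers of the MRC fibration $\psi$. The unifying principle is that very free rational curves in the fibers of $\psi$ can be moved around in $\Gamma$ while keeping sharp control of their intersection with $-(K_{\Gamma/Y}+D)+E''$ and with $\pi$-exceptional/$\varphi$-exceptional divisors, and this control is strong enough to exclude the pathologies described in (1) and (2).

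\textbf{Proof of (2).} I would proceed by contradiction: suppose that for some prime divisor $B \subset Y_0$, the decomposition $\varphi^{\star} B = \sum m_j B_j$ contains a component $B_j$ with $m_j \geq 2$ that is not $\pi$-exceptional, so $\pi(B_j)$ is a prime divisor on $X$. I would build a covering family $\{C_t\}_{t \in T}$ of rational curves on $\Gamma$ by taking a very free rational curve $C_0$ in a general fiber $\Gamma_y$ and deforming it along a general smooth curve $\sigma \subset Y_0$ through a general point of $B$; existence of such a deformation rests on the rational connectedness of general fibers combined with Graber--Harris--Starr \cite{GHS03} to get sections over $\sigma$, and on the standard deformation theory of free rational curves. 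Now $\varphi^{\star}B \cdot C_t \geq m_j \geq 2$ for each point where $C_t$ meets $B_j$, while on the other hand the nef class $-(K_{\Gamma/Y}+D)+E''$ pairs non-negatively with $C_t$, and $E''\cdot C_t=0$ because $C_t$ is $\pi$-horizontal and movable. Comparing these two bounds via adjunction on $\varphi^{-1}(\sigma)$ and the definition of the multiplicities forces $m_j=1$, a contradiction.

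\textbf{Proof of (1).} The ``in particular'' clause is immediate once the main clause is known, since $\varphi$-exceptional divisors lie over $Y\setminus Y_0$ by definition. For the main clause, I would again argue by contradiction: suppose some irreducible component of $\pi\bigl(\fiber{\varphi}{Y\setminus Y_0}\bigr)$ has codimension one in $X$, coming from a divisor $D_0 \subset \Gamma$ with $\pi(D_0)$ a prime divisor on $X$ and $\codim_{Y} \varphi(D_0) \geq 2$. Choose a complete-intersection curve $C \subset Y$ that misses $\varphi(D_0)$; then $\fiber{\varphi}{C}\to C$ is a family of rationally connected varieties, admitting sections by \cite{GHS03}. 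By deforming such a section using very free rational curves in the fibers, I construct a covering family $\{C_t\}$ of curves in $\Gamma$ avoiding $D_0$, whose push-forward $\pi_*C_t$ nevertheless cover a Zariski-dense subset of $X$, and hence meet $\pi(D_0)$. Any intersection of $\pi_*C_t$ with $\pi(D_0)$ would have to be accounted for by a meeting of $C_t$ with $D_0$, contradicting the construction.

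\textbf{Main obstacle.} The delicate step is arranging the genericity of the family $\{C_t\}$ simultaneously so that: (i) each $C_t$ lies in the smooth locus of the pair $(\Gamma, D)$, ensuring that intersection with $K_{\Gamma/Y}+D$ is controlled by the free quotient of the tangent bundle; (ii) $C_t$ avoids the $\pi$-exceptional locus sufficiently that $E''\cdot C_t = 0$; and (iii) $C_t$ nevertheless meets the targeted divisor $B_j$ (for (2)) or forces a meeting with $D_0$ (for (1)) in a controlled way. Keeping these three conditions compatible requires a stratified deformation argument combining Graber--Harris--Starr sections with Mori's bend-and-break, and this is precisely the technically intricate heart of the proof in \cite{Zha05}.
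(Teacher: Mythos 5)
The paper does not actually prove this statement: it is imported verbatim as \cite[Main thm]{Zha05}, and the only additional observation the authors record (in Remark~\ref{rem-set}) is that the $\pi$-contractibility of $\supp\varphi^\star N$ follows ``from the proof of'' Zhang's theorem. So there is no internal proof to compare against; what one must compare your sketch to is Zhang's actual argument, which is also what \cite{CH} follows in its Lemma~3.3. That argument is based on Viehweg-type weak positivity of direct image sheaves $\varphi_\star(\omega_{\Gamma/Y}^{\otimes m}\otimes\cdots)$, combined with the pseudo-effectivity of $K_Y$ (the MRC base is not uniruled by \cite{GHS03}, hence $K_Y$ is pseudo-effective by \cite{BDPP}); a multiple fibre or a non-equidimensional locus would force a strictly positive contribution to $K_Y$ from ramification, contradicting $\kappa(Y)=0$. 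Your proposal replaces this sheaf-theoretic argument by a deformation-of-rational-curves argument; that is a genuinely different route, not the one the cited source takes.

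Beyond the difference in method, your sketch has concrete gaps. In the proof of~(2), the claim that $E''\cdot C_t=0$ because ``$C_t$ is $\pi$-horizontal and movable'' is not justified, and in fact fails in exactly the setting at hand: the paper explicitly flags that, when $X$ is singular, one can have $\varphi(E)=\varphi(E'')=Y$, so a covering family of $\varphi$-horizontal rational curves will unavoidably meet $E''$; moreover $E''=E'-\varphi^\star N$ need not be effective, so movability of $C_t$ gives no sign control on $E''\cdot C_t$ at all. Also in~(2), no mechanism is given that turns the non-negativity of $(-(K_{\Gamma/Y}+D)+E'')\cdot C_t$ into an \emph{upper} bound on $\varphi^\star B\cdot C_t$; these two intersection numbers are not directly related as stated, and ``comparing these two bounds via adjunction'' does not identify the missing inequality. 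In the proof of~(1), the contradiction is asserted but not derived: the fibre $\pi^{-1}(\pi(D_0))$ may contain components other than $D_0$ (in particular, $\pi$-exceptional divisors), so $\pi_\star C_t$ can meet $\pi(D_0)$ even though $C_t$ avoids $D_0$; to close the gap you would need $C_t$ to also avoid every such extra component without ceasing to be a covering family, and you give no construction that achieves this. You did anticipate these difficulties in your ``Main obstacle'' paragraph, but the obstacles there are not resolved, and in the klt-pair setting (where $\varphi(E'')$ can dominate $Y$) condition~(ii) of your list is genuinely unattainable.
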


\begin{rem}\label{rem-set}

For a given klt pair $(X, \Delta)$ with the nef anti-canonical divisor $-(K_X+\Delta)$. Let $X \dashrightarrow Y$ be the MRC fibration.
Thanks to \cite{Zha05}, we know that $\kappa (Y) =0$. 
We take a log resolution $\holom{\pi}{\Gamma}{X}$ of $(X, \Delta)$ and $X \dashrightarrow Y$ as in \eqref{diagram}.
Let $N$  be the $\mathbb{Q}$-effective divisor on $Y$ such that 
$N$ is $\mathbb{Q}$-linearly equivalent to $K_Y$. 
From the proof of \cite[Main thm]{Zha05}, we can see that $\supp \varphi^{\star}(N)$ is $\pi$-contractible.

Now we check that $X \dashrightarrow Y$ satisfies the setting \ref{setting}.
We have the canonical bundle formula 
$$
K_{\Gamma}+\Delta'=\pi^\star (K_X+\Delta)+E', 
$$
where $\Delta'$ and $E'$ are the effective divisors with no common components, 
and $\Delta'$ is a klt divisor. 
Then we have 
$$
-(K_{\Gamma/Y}+\Delta')=\varphi^{\star}N - E' - \pi^{\star}(K_X+\Delta). 
$$
The assumption (\ref{mainassump}) is satisfied by the nefness of $-(K_X+\Delta)$ and the fact that $\supp \varphi^{\star}(N)$ is $\pi$-contractible.
Further if we define $E'':=  E'-\varphi^{\star}N$, 
then the condition \eqref{nefconddd} is also satisfied by the above theorem. 
\end{rem}


The following two lemmas also play an important role in the proof, 
and they follow from the arguments in \cite[lemma 3.5, lemma 3.6]{CH}.

\begin{lemm}\label{denew}
Let $L$ be a pseudo-effective and $\varphi$-big line bundle on $\Gamma$. 
Let $p\in \mathbb{Z}$ be an $($not necessarily positive$)$ integer. 
If $\varphi_\star (p (K_{\Gamma/Y}+D) +L)$ is a non-zero sheaf,
then $\varphi_\star (p (K_{\Gamma/Y}+D) +L + c E'')$ is weakly positively curved on $Y$ 
for any $c$ large enough with respect to $p$. 
\end{lemm}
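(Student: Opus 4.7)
The plan is to reduce the statement to Theorem \ref{thm-cur}(2) by rewriting the $\mathbb{Q}$-line bundle $p(K_{\Gamma/Y}+D)+L+cE''$ in the form $mK_{\Gamma/Y}+N+L'$ for a very large integer $m$, where $N$ is nef and $L'$ is pseudo-effective, $\varphi$-big, and carries a semi-positive singular metric whose $m$-th root has trivial multiplier ideal along general fibres. The nef divisor $N_0 := -(K_{\Gamma/Y}+D)+E''$ supplied by \eqref{nefconddd} is the mechanism by which we trade off a large multiple of $K_{\Gamma/Y}$ for the nef summand.

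First I will choose $m \in \mathbb{Z}_{+}$ large enough compared to $p$ and perform the algebraic rearrangement
\[
p(K_{\Gamma/Y}+D)+L+cE'' \;=\; m K_{\Gamma/Y} \;+\; (m-p)N_{0} \;+\; \bigl(L + mD + (c-m+p)E''\bigr),
\]
which is a direct consequence of $N_{0} \equiv -(K_{\Gamma/Y}+D)+E''$. Assuming $c$ is large enough with respect to $p$ (and with respect to $m$), the coefficient $c-m+p$ will be non-negative, so we set $L' := L + mD + (c-m+p)E''$ and $N := (m-p)N_{0}$, which is nef.

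Next I verify the hypotheses of Theorem \ref{thm-cur}(2) for $L'$. The $\varphi$-bigness of $L'$ follows from the $\varphi$-bigness of $L$ since $mD + (c-m+p)E''$ is a $\mathbb{Q}$-Cartier divisor. For the singular metric, I combine a positively curved metric on $L$ with the canonical singular metrics of the effective divisors $mD$ and of the effective part $(c-m+p)E''_{+}$, and I absorb the $\pi$-exceptional negative part $(c-m+p)E''_{-}$ into the construction by using that $E''$ is $\pi$-exceptional: along a general fibre $\Gamma_{y}$ any $\varphi$-vertical component of $E''_{-}$ does not meet $\Gamma_{y}$, and any $\varphi$-horizontal component only lowers the vanishing order of sections (which is harmless for the multiplier ideal). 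For the multiplier ideal condition $\I{h_{L'}^{1/m}|_{\Gamma_{y}}} = \mathcal{O}_{\Gamma_{y}}$, I exploit the klt assumption on $D$, so that $\frac{m}{m}D = D$ contributes trivially, while the coefficient $\frac{c-m+p}{m}$ in front of $E''_{+}|_{\Gamma_{y}}$ can be made arbitrarily small by taking $m$ large, making the contribution of $E''$ negligible.

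The main obstacle will be the handling of the (possibly non-effective) $\pi$-exceptional divisor $E''$: its negative part $E''_{-}$ carries no natural semi-positively curved metric, and this is where the quantitative dependence of $c$ on $p$ enters. The strategy is to pick $m$ first so that $(m-p)N_{0}$ is nef and $\frac{1}{m}$ is small enough to absorb the fibrewise contribution of $E''$ into the klt tolerance of $D$, and then to pick $c \geq m-p$ so that the remaining $E''$-coefficient is non-negative. Once both choices are in place, Theorem \ref{thm-cur}(2) applies verbatim and yields the weak positivity of $\varphi_{\star}\bigl(p(K_{\Gamma/Y}+D)+L+cE''\bigr)$ on $Y$.
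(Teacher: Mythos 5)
Your algebraic rearrangement is correct and is the same mechanism the paper uses: trade off a multiple of $K_{\Gamma/Y}+D$ against the nef class $N_{0}:=-(K_{\Gamma/Y}+D)+E''$ supplied by \eqref{nefconddd}, then invoke Theorem~\ref{thm-cur}(2). However, the paper couples the two auxiliary integers, effectively taking $m=c+p$, and this coupling is not a cosmetic detail. With $m=c+p$ your identity becomes
\[
p(K_{\Gamma/Y}+D)+L+cE'' \;=\; (c+p)\,K_{\Gamma/Y}+c\,N_{0}+\bigl(L+(c+p)D\bigr),
\]
so the pseudo-effective factor $L'=L+(c+p)D$ contains no $E''$ at all: it is manifestly pseudo-effective, $\varphi$-big, and has trivial multiplier ideal along a general fibre because $D$ is klt. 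This is exactly the paper's equality~\eqref{equll}.

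The gap in your write-up is that you decouple $m$ from $c$: you fix $m$ first, then allow $c\geq m-p$ to be arbitrary, leaving $L'=L+mD+(c-m+p)E''$. The divisor $E''$ is in general \emph{not} effective (in the MRC application of Remark~\ref{rem-set}, $E''=E'-\varphi^{\star}N$ with both $E'$ and $\varphi^{\star}N$ effective, so $E''_{-}\neq0$). A negative $\mathbb{Q}$-divisor summand cannot be ``absorbed'': it contributes no semi-positive singular metric, and the relevant hypothesis of Theorem~\ref{thm-cur}(2) is the existence of a closed positive curvature current on $L'$, not a condition about multiplier ideals alone. For $c$ large your $L'$ may fail to be pseudo-effective on general fibres when $E''_{-}$ has $\varphi$-horizontal components, which does happen when $X$ is singular. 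Your claim that $(c-m+p)/m$ ``can be made arbitrarily small by taking $m$ large'' also contradicts the stated order of choices ($m$ first, then $c$ free). The simple fix is the coupling $m=c+p$, which eliminates the $E''$ term from $L'$ and makes your argument coincide with the paper's.
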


\begin{proof}
Let $m$ be a positive integer such that $m (D -E'')$ is Cartier. 
By condition \eqref{nefconddd}, we can see that 
\begin{equation}\label{birnef}
M: = - m (K_{\Gamma/Y} + D) +m E'' \text{ is nef},
\end{equation}
and also we have
\begin{equation}\label{equll}
p (K_{\Gamma/Y}+D) + L + m E'' =(p+m)(K_{\Gamma/Y}+D) + (M+ L). 
\end{equation}
Thanks to Theorem \ref{thm-cur}, for $m$ large enough (with respect to $p$ and $L$), 
$$\varphi_\star \left( (p+m)(K_{\Gamma/Y}+D) + (M+ L) \right)$$ 
is weakly positively curved on $Y$. Together with \eqref{equll}, 
the sheaf $\varphi_\star (p (K_{\Gamma/Y}+D) +L + m E'')$ is weakly positively curved.
\end{proof}

\begin{lemm} \label{prop-fun2}
Let $L$ be a $\varphi$-relatively big line bundle on $\Gamma$.
Then, for every $m \in \mathbb{Z}$,  
the direct image sheaf 
$\pi_\star (L  - \frac{1}{r}\varphi^\star (\det \varphi_\star (L+ m E))$ is pseudo-effective, 
where $r$ is the rank of $\varphi_\star (L+ m E)$.
\end{lemm}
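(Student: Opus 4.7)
The plan is to combine the evaluation map of the direct image with classical positivity on an auxiliary projective bundle. Let $\mathcal{F} := \varphi_\star(L + mE)$, a torsion-free sheaf of rank $r$ on $Y$ (the claim is vacuous when $r=0$). The $\varphi$-bigness of $L$ ensures that the natural evaluation morphism $\mathrm{ev}: \varphi^\star \mathcal{F} \to L + mE$ is generically surjective along $\varphi$, since on a general fiber $\Gamma_y$ the fiber $\mathcal{F}_y \cong H^0(\Gamma_y, (L+mE)|_{\Gamma_y})$ generates $(L+mE)|_{\Gamma_y}$ outside a proper subset. After blowing up the base locus of $\mathrm{ev}$, this induces a rational map $\sigma: \Gamma \dashrightarrow \mathbb{P}_Y(\mathcal{F}^\star)$ with
$$\sigma^\star \mathcal{O}_{\mathbb{P}(\mathcal{F}^\star)}(1) = L + mE - \Xi$$
for some effective divisor $\Xi$ supported on the indeterminacy of $\sigma$.

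On $\mathbb{P}(\mathcal{F}^\star)$ the relative canonical formula reads $\mathcal{O}(1) - \frac{1}{r}\rho^\star \det \mathcal{F} = -\frac{1}{r} K_{\mathbb{P}(\mathcal{F}^\star)/Y}$, so the heart of the proof is to establish pseudo-effectivity of this $\mathbb{Q}$-class on $\mathbb{P}(\mathcal{F}^\star)$. My first step is to apply Lemma \ref{denew} with $p = 0$ to the class $L + mE + cE''$ for $c$ sufficiently large, invoking the nefness assumption (\ref{nefconddd}) from Setting \ref{setting}; this yields that $\varphi_\star(L + mE + cE'')$ is weakly positively curved on $Y$, and one recovers $\mathcal{F}$ itself up to an exceptional twist that will be harmless under the final pushforward. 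From the weak positive curvature of $\mathcal{F}$ one then derives the desired pseudo-effectivity on the projective bundle, either via a Harder-Narasimhan/Miyaoka-type slope argument on a general complete intersection curve, or via a direct approximation argument using ample twists $\mathcal{F} \otimes A^{1/k}$ and passage to a limit.

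Pulling back via $\sigma$ to $\Gamma$, we obtain pseudo-effectivity of $L + mE - \Xi - \frac{1}{r}\varphi^\star \det \mathcal{F}$, and hence of $L + mE - \frac{1}{r}\varphi^\star \det \mathcal{F}$, since $\Xi$ is effective. Finally, applying $\pi_\star$ and using that $E$ is $\pi$-exceptional (so $\pi_\star(mE)=0$ as a Weil divisor class on $X$) yields
$$\pi_\star\!\left(L - \frac{1}{r}\varphi^\star \det \mathcal{F}\right) = \pi_\star\!\left(L + mE - \frac{1}{r}\varphi^\star \det \mathcal{F}\right),$$
which is pseudo-effective on $X$ as claimed.

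The main obstacle is the middle step: converting the weak positive curvature of $\mathcal{F}$ on $Y$ into the pseudo-effectivity of the normalized tautological class $\mathcal{O}(1) - \frac{1}{r}\rho^\star \det \mathcal{F}$ on $\mathbb{P}(\mathcal{F}^\star)$. Since weak positive curvature is strictly weaker than semistability or numerical flatness, the statement does not follow directly from any formal positivity result; one must work carefully with slopes on general curves, the Harder-Narasimhan filtration, and the exceptional correction $cE''$ (which is $\pi$-exceptional by construction, hence vanishes under $\pi_\star$), so that no residual positivity is lost in passing from $\Gamma$ to $X$.
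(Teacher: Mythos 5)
Your outline correctly sets up the projective bundle $\mathbb{P}_Y(\mathcal{F}^\star)$ and the evaluation map, but the step you yourself flag as the "main obstacle" is not merely hard — it is false. Weak positive curvature of $\mathcal{F}$ (or even nefness) does not imply that $\mathcal{O}_{\mathbb{P}(\mathcal{F}^\star)}(1) - \tfrac{1}{r}\rho^\star \det\mathcal{F}$ is pseudo-effective. Take $\mathcal{F} = \mathcal{O}_{\mathbb{P}^1} \oplus \mathcal{O}_{\mathbb{P}^1}(1)$: this is nef, yet on the Hirzebruch surface $\mathbb{F}_1 = \mathbb{P}(\mathcal{F}^\star)$ the normalized tautological class is $C_0 - \tfrac{1}{2}F$ (with $C_0$ the $(-1)$-section), which lies outside the pseudo-effective cone spanned by $C_0$ and $F$. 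The normalized tautological class being pseudo-effective is a semistability-type statement, and weak positive curvature carries no such information.

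The paper's proof does not attempt to prove pseudo-effectivity on $\Gamma$, $Y$, or $\mathbb{P}(\mathcal{F}^\star)$; only the \emph{pushforward to $X$} is pseudo-effective, and this distinction is load-bearing. The argument goes through the $r$-fold fiber product $\Gamma^{(r)}$: the canonical nonzero morphism $\det\varphi_\star(L+mE) \to (\varphi_r)_\star(L^{(r)}+mE^{(r)}+E_2)$ produces an effective class $L'$ on $\Gamma^{(r)}$; then the Ohsawa--Takegoshi extension result \cite[Prop.\ 2.10]{Cao} is applied to a decomposition involving the relative canonical of $\Gamma^{(r)}/Y$ and the nef class from \eqref{nefconddd}, giving a surjective restriction map to a general fiber; restricting to the diagonal yields sections of $F_{p,q} + prL - p\varphi^\star c_1(\varphi_\star(L+mE)) + \varphi^\star A_Y$ on $\Gamma$, where $F_{p,q}$ is supported in $E \cup E_1$ ($E_1$ the non-flat/non-reduced locus of $\varphi$). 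The crucial ingredient — which your proposal never invokes — is Theorem \ref{prop-fun} (Zhang's result for MRC fibrations): $\pi(E \cup E_1)$ has codimension at least two in $X$, so $\pi_\star F_{p,q}$ contributes no divisorial class, and letting $p \to \infty$ kills the ample correction $\tfrac{1}{p}\varphi^\star A_Y$. It is precisely this mechanism that absorbs the "instability" of $\mathcal{F}$ that defeats your approach. Without the MRC hypothesis and Theorem \ref{prop-fun} the lemma would fail, as the Hirzebruch example shows; your proposed route has no place where that hypothesis enters and therefore cannot be completed.
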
 

\begin{proof}
Let $E_1 \subset \Gamma$ be the union of the non-reduced and non-flat locus of  $\varphi$. 
Let $\Gamma^{(r)}$ be a desingularization of the $r$-times $\varphi$-fiberwise 
product $\Gamma \times_Y \cdots \times_Y \Gamma$. 
Let $\pr_i : \Gamma^{(r)} \rightarrow \Gamma$ be the $i$-th directional projection, 
and let $\varphi_r : \Gamma^{(r)} \rightarrow Y$ be the natural fibration.  
Set $L^{(r)} := \sum_{i=1}^r  \pr_i ^\star L$ and $E^{(r)} :=\sum_{i=1}^r \pr_i ^\star  E$.
We have a natural morphism
\begin{equation}\label{natmor}
\det \varphi_\star (\sO_\Gamma(L +mE)) \rightarrow (\varphi_r)_\star (\sO_{\Gamma^{(r)}}(L^{(r)} + m E^{(r)} +E_2))
\end{equation}
for some divisor $E_2$ supported in $\sum\limits_{i=1}^r \pr_i ^\star (E_1)$.
Set 
$$
L' :=L^{(r)} + m E^{(r)} +E_2 - \varphi_r ^\star c_1(\varphi_\star (\sO_\Gamma(L +mE))).
$$
Since the restriction of \eqref{natmor} on a general point $y\in Y$ is non zero, 
the morphism \eqref{natmor} induces a nontrivial section in 
$H^0 (\Gamma^{(r)}, L')$. Therefore $L'$ is an effective line bundle on $\Gamma^{(r)}$. 

\medskip

Let $\varphi^{(r)} : \Gamma^{(r)}\rightarrow Y$ be the natural morphism and set $\Delta^{(r)} := \sum_{i=1}^r \pr_i ^\star D$.
We can thus find a (not necessary effective) divisor  $E_3$ supported in 
$\sum_{i=1}^r \pr_i ^\star (E_1 +E)$ such that
$$
- K_{\Gamma^{(r)}/Y}  - \Delta^{(r)} +E_3  \text{ is nef}.
$$
By using \cite[Proposition 2.10]{Cao}, there exists an ample line bundle $A_Y$ on $Y$ such that for every $p\in\mathbb{Z}_+$ and for $q$ large enough (with respect to $p$),
the restriction
\begin{align*}
&H^0 (\Gamma^{(r)}, pq (K_{\Gamma^{(r)} /Y}+  \Delta^{(r)} )+ pq (- K_{\Gamma^{(r)}/Y} - \Delta^{(r)}+E_3) + p L' +\varphi_r ^\star A_Y ) \\
\rightarrow 
&H^0 (\Gamma^{(r)} _y,  pq E_3 + p L' +\varphi_r ^\star A_Y ) 
\end{align*}
is surjective for a general $y \in Y$.

By restricting the above morphism on the diagonal of $\Gamma^{(r)}$
(which is well-defined only on $\Gamma\setminus E_1$, 
but we can extend it on $\Gamma$ by adding some divisor supported in $E_1$), 
we can find some effective divisor $F_{p, q}$ (depending on $p$ and $q$) supported in $E \cup  E_1$ such that
\begin{align*}
&H^0 (\Gamma,   F_{p, q} + p r L  -  p \varphi^\star c_1(\varphi_\star (\sO_\Gamma(L+mE))) +\varphi^\star A_Y )\\
\rightarrow 
&H^0 (\Gamma_y,   F_{p, q} + p rL  - p \varphi^\star c_1(\varphi_\star (\sO_\Gamma(L+mE))) +\varphi^\star A_Y ) 
\end{align*}
is surjective.
In particular, it can be see that 
$$
F_{p, q} + p r L  - p \varphi^\star c_1(\varphi_\star (\sO_\Gamma(L+mE))) +\varphi^\star A_Y
$$ is effective.
Thanks to Theorem \ref{prop-fun}, 
the subvariety 
$\pi_\star (F_{p, q}) \subset \pi_\star (E \cup E_1)$ has codimension at least two. 
Therefore we can conclude that 
$$
\pi_\star 
\left(
c_1
\left(
r L  - \varphi^\star c_1(\varphi_\star (\sO_\Gamma(L+mE))) +\frac{1}{p}\varphi^\star A_Y)
\right)
\right)
$$ 
is pseudo-effective on $X$.
The lemma is proved by letting $p \rightarrow +\infty$.
\end{proof}

We need also the following lemma, which is very close to \cite[Lemma 3.7]{CH}. 

\begin{prop} \label{prop-fun3}
There exists a positive integer $m_0 >0$ 
such that $\varphi_\star (A+ m E)$ is of the same rank and the natural morphism 
\begin{equation}\label{isomy}
\det \varphi_\star (A+mE) \rightarrow \det \varphi_\star (A+(m+1)E) \text{ on } Y_0, 
\end{equation}
is an isomorphism for every $m \geq m_0$.

In particular, if we set $\widetilde{A}$ by  
$$
\widetilde{A} := A +m_0 E - \frac{1}{r_{m_0}} \varphi^\star \det \varphi_\star (A+ m_0 E)
$$ 
then $\pi_\star \widetilde{A}$ is pseudo-effective and $\pi_\star \varphi^\star c_1 \big(\varphi_\star (\widetilde{A} + \widetilde{E})\big) =0$ for any effective $\pi$-exceptional divisor $\widetilde{E}$.
Here $r_{m_0}$ is the rank of $\varphi_\star (A+ m_0 E)$.
\end{prop}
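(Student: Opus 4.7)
The plan is to prove the three clauses of the statement in sequence, using Theorem~\ref{prop-fun} as the crucial geometric input.

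First, to produce the stabilization value $m_0$ for the ranks, I analyse the canonical injections
$$
\iota_m\colon \varphi_\star\mathcal{O}_\Gamma(A+mE)\hookrightarrow \varphi_\star\mathcal{O}_\Gamma(A+(m+1)E)
$$
given by multiplication by the canonical section of $\mathcal{O}(E)$. The generic rank of $\varphi_\star\mathcal{O}_\Gamma(A+mE)$ is $h^0(\Gamma_y,(A+mE)|_{\Gamma_y})$ for general $y\in Y_0$. For such a $y$, any component of $E$ meeting $\Gamma_y$ must be $\varphi$-horizontal; since $\pi(E)$ has codimension $\geq 2$ in $X$, a general-position intersection argument shows $E|_{\Gamma_y}$ is exceptional for $\pi|_{\Gamma_y}\colon\Gamma_y\to X_y$. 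By the normality of $X_y$, one has $(\pi|_{\Gamma_y})_\star\mathcal{O}(kE|_{\Gamma_y})=\mathcal{O}_{X_y}$ for every $k\geq 0$, so $(\pi|_{\Gamma_y})_\star(A+mE)|_{\Gamma_y}$ is independent of $m$ for $m$ large, and the ranks---being non-decreasing and bounded---stabilize at some $r_{m_0}$.

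The main obstacle is the second step, the stabilization of determinants on $Y_0$. For $m\geq m_0$ the cokernel of $\iota_m$ is torsion, and I must show that its support avoids codimension one inside $Y_0$, which is equivalent to $\det\iota_m$ being an isomorphism over $Y_0$. Here Theorem~\ref{prop-fun}(2) is essential: for any prime divisor $B\subset Y_0$, the decomposition $\varphi^\star B=\sum m_j B_j$ has $B_j\subset\mathrm{Supp}(E)$ whenever $m_j>1$. Localising at the generic point of $B$ and keeping track of the pole orders along each $B_j$, one shows that the stabilization established above at general points of $Y_0$ already extends across codimension-one strata, i.e.\ there is no divisorial jump in $\varphi_\star\mathcal{O}(A+mE)$ as $m$ grows past $m_0$.

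Once these two steps are in hand, the remaining assertions follow formally. The pseudo-effectivity of $\pi_\star\widetilde{A}$ is a direct application of Lemma~\ref{prop-fun2} to $L=A$ and $m=m_0$, combined with $\pi_\star E=0$. For the vanishing of the last Chern class, let $\widetilde{E}$ be any effective $\pi$-exceptional divisor, so that $\widetilde{E}\leq NE$ for some positive integer $N$. The inclusions
$$
\varphi_\star\mathcal{O}(A+m_0E)\hookrightarrow \varphi_\star\mathcal{O}(A+m_0E+\widetilde{E})\hookrightarrow \varphi_\star\mathcal{O}(A+(m_0+N)E)
$$
all have rank $r_{m_0}$ by the first step, so the outer two share a common determinant on $Y_0$ by the second step and the middle one is sandwiched, forcing it to share the same determinant as well. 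A rank-$r_{m_0}$ computation using the definition of $\widetilde{A}$ then gives $c_1(\varphi_\star(\widetilde{A}+\widetilde{E}))=0$ on $Y_0$; this class is thus supported in $Y\setminus Y_0$, so $\varphi^\star c_1(\varphi_\star(\widetilde{A}+\widetilde{E}))$ is supported in $\varphi^{-1}(Y\setminus Y_0)$, whose $\pi$-image has codimension $\geq 2$ in $X$ by Theorem~\ref{prop-fun}(1). Hence $\pi_\star\varphi^\star c_1(\varphi_\star(\widetilde{A}+\widetilde{E}))=0$ as a $\mathbb{Q}$-Weil divisor class on $X$.
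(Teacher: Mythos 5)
Your treatment of the rank stabilization and of the final conclusions ($\pi_\star\widetilde{A}$ pseudo-effective, and the pushforward of $c_1$ vanishing) are essentially sound, though you take a fiberwise route to bound the ranks where the paper applies Ohsawa--Takegoshi on the total space. The sandwiching trick
$\varphi_\star(A+m_0E)\hookrightarrow\varphi_\star(A+m_0E+\widetilde{E})\hookrightarrow\varphi_\star(A+(m_0+N)E)$
for the last claim is a perfectly good variant of the paper's argument.

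The genuine gap is in the crucial middle step: proving \eqref{isomy} is an isomorphism over all of $Y_0$, not just at general points. You assert that Theorem \ref{prop-fun}(2), combined with ``localising at the generic point of $B$ and keeping track of pole orders along each $B_j$,'' rules out a divisorial jump. But no argument is actually given, and it is unclear how such a local argument would close. Knowing that multiple fibers over a prime divisor $B\subset Y_0$ occur only along $\pi$-exceptional $B_j\subset\supp E$ does not, by itself, bound how much the stalk $\varphi_\star(A+mE)_{\eta_B}$ (a free rank-$r$ module over the DVR $\mathcal O_{Y,\eta_B}$) can enlarge as $m$ grows; the fiberwise stabilization at a \emph{general} point of $Y_0$ does not control the stalk at the generic point of $B$, since the relevant higher direct images need not vanish and the family is not known to be flat there. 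The paper's proof closes this exact gap globally rather than locally: it uses Lemma \ref{prop-fun2} to produce a uniform upper bound $\pi_\star(A)\geq \frac1r\pi_\star\varphi^\star\det\varphi_\star(A+mE)$ for \emph{all} $m$, notes that $\det\varphi_\star(A+(m+1)E)-\det\varphi_\star(A+mE)$ is pseudo-effective once the ranks have stabilized, and then observes that if infinitely many of these differences contained a divisor $D_k$ meeting $Y_0$, the pushforwards $\pi_\star\varphi^\star[D_k]$ would be nonzero effective divisors whose infinite sum is dominated by the fixed class $r\,\pi_\star(A)$ --- a contradiction. You will need to reproduce some argument of this flavor (or genuinely flesh out the local one, which I do not believe can be done with pole-counting alone) before the determinant stabilization and hence the proposition can be considered proved.
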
 

\begin{proof}
We first prove that 
$\varphi_\star (A+mE)$ is of constant rank for a sufficiently large $m$. 
Let $A_Y$ be a sufficiently very ample line bundle on $Y$.
Recall that the $\pi$-exceptional divisor $E''$ satisfies condition (\ref{nefconddd}). 
By the standard Ohsawa-Takegoshi extension theorem, the restriction map 
\begin{align*}
&H^0 (\Gamma, A  +mE + pE''+ \varphi^\star A_Y) \\
= &H^0 (\Gamma, p( K_{\Gamma/Y} + D) + p( - K_{\Gamma/Y} - D +E'') + A +mE + \varphi^\star A_Y)\\
\longrightarrow &H^0 (\Gamma_y, p( K_{\Gamma/Y} + D) + p( - K_{\Gamma/Y} - D +E'') + A +mE)
\end{align*}
is surjective 
for a general $y\in Y$ and for  $p \gg m$.
The support $E+E''$ is contained in the $\pi$-exceptional locus, and thus 
$$h^0 (\Gamma,  A + m E + p E''   + \varphi^\star A_Y)$$
is independent of $p$ and $m$ when $p\gg m\gg 1$. 
Then the above subjectivity implies that the rank of $\varphi_\star (A+ mE + p E'') $ 
can be estimated by a constant when when $p \gg m \gg 1$. 
Then we have 
$$
\rank \varphi_\star (A+ mE ) \leq \rank \varphi_\star (A+ mE + p E'') \leq C.
$$
Therefore the rank of $\varphi_\star (A+ m E)$ is constant when $m$ is large enough.

\smallskip

We now prove that the morphism \eqref{isomy} is an isomorphism on $Y_0$ by contradiction.
Let $m_0 \in\mathbb{Z}_+$ such that $\rank \varphi_\star (A+mE)$ is constant for $m \geq m_0$.
Then it can be seen that 
$$
\det \varphi_\star (A+(m+1)E)  - \det \varphi_\star (A+mE) \text{ is pseudo-effective}
$$ for every $m \geq m_0$.
If \eqref{isomy} is not an isomorphism on $Y_0$ for infinitely many $m\in\mathbb{Z}_+$, we can find a sequence $a_k \rightarrow +\infty$ and some effective divisor
$D_k$ with $D_k \cap Y_0 \neq \emptyset$ such that
$$\det \varphi_\star (A+(a_k+1)E)  \geq \det \varphi_\star (A+ a_k E) + [D_k].$$ 
However, by Proposition \ref{prop-fun2}, we have 
$$\pi_\star (A) \geq \frac{1}{r}\pi_\star (\varphi^\star\det \varphi_\star (A+ m E)) \text{ for every } m\in\mathbb{Z}_+.$$
Therefore we obtain 
$$\pi_\star (A) \geq \frac{1}{r} \pi_\star \varphi^\star\det \varphi_\star (A+ m_0 E) + \frac{1}{r} \sum_{k=1}^\infty \pi_\star \varphi^\star [D_k].$$
This is a contradiction to the fact that 
$\pi_\star \varphi^\star [D_k]$ is an effective non-zero divisor.
Therefore the morphism \eqref{isomy} is an isomorphism over $Y_0$ when $m$ is large enough.

\smallskip

For the latter conclusion of the proposition, 
thanks to Lemma \ref{prop-fun2}, we can see that $\pi_\star \widetilde{A}$ is pseudo-effective.
By the construction, we obtain 
$$
\pi_\star \varphi^\star c_1 \big(\varphi_\star (\widetilde{A})\big) =0. 
$$
By the isomorphism \eqref{isomy} on $Y_0$, we know that 
$$
\det \varphi_\star (A+ m_0 E + \widetilde{E}) \rightarrow \det \varphi_\star (A+ m_0 E) 
$$
is also isomorphic  on  $Y_0$. 
Therefore we obtain the desired conclusion 
$\pi_\star \varphi^\star c_1 \big(\varphi_\star (\widetilde{A} + \widetilde{E})\big) =0$.
\end{proof}

The following proposition, which is obtained from the above lemmas, 
is the main proposition of this section. 
 
\begin{prop} \label{prop-fun5}
Let $\widetilde{A}$ be the line bundle defined in Proposition \ref{prop-fun3}.
Then, for every $m, p\in\mathbb{Z}_+$, by taking $c_{m, p} >0$ large enough, 
the direct image sheaf 
$$
\cV_{m, p} := \varphi_\star (-m (K_{\Gamma/Y} +D) + c_{m, p} E'' + p \widetilde{A})
$$ 
is weakly positively curved and $\pi_\star \varphi^\star c_1 (\cV_{m, p})  =0$.

In particular, for the complete intersection  $\overline{C}$  of 
general members of a very ample line bundle on $X$, 
the restriction $\varphi^\star \cV_{m, p} |_{{C}}$ is numerically flat.
Here ${C}:=\pi^{-1}(\overline{C})$
\end{prop}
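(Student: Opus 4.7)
My plan is to establish, in order, the weak positivity of $\mathcal{V}_{m,p}$, the vanishing $\pi_\star\varphi^\star c_1(\mathcal{V}_{m,p})=0$, and the numerical flatness of the restriction to $C$, with Proposition \ref{propnumflat} giving the last step automatically from the first two.

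For the weak positivity, the natural attempt is to apply Lemma \ref{denew} with integer $-m$ and line bundle $L=p\widetilde{A}$. The line bundle $\widetilde{A}$ is $\varphi$-big since $A+m_0E$ is $\varphi$-very ample and $\widetilde{A}$ differs from it only by a pullback from $Y$. The delicate point---and the main obstacle---is pseudo-effectivity on $\Gamma$, for Lemma \ref{prop-fun2} gives only that $\pi_\star\widetilde{A}$ is pseudo-effective on $X$. To bridge this gap I would pick an effective $\pi$-exceptional divisor $F$ so that $p\widetilde{A}+F$ is pseudo-effective on $\Gamma$, and rewrite
\[
-m(K_{\Gamma/Y}+D)+c_{m,p}E''+p\widetilde{A}=-m(K_{\Gamma/Y}+D)+(c_{m,p}E''-F)+(p\widetilde{A}+F).
\]
Applying the proof of Lemma \ref{denew} to $L=p\widetilde{A}+F$ and checking that the exceptional correction $c_{m,p}E''-F$ still plays the role of a large multiple of $E''$---which requires care since $E''=E'-\varphi^\star N$ need not be effective---yields the weak positivity of $\mathcal{V}_{m,p}$ for $c_{m,p}$ sufficiently large.

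For the Chern class vanishing, I would expand $\widetilde{A}=A+m_0E-\tfrac{1}{r_{m_0}}\varphi^\star\det\varphi_\star(A+m_0E)$ and extract the pulled-back factor from the direct image, obtaining at the level of first Chern classes
\[
c_1(\mathcal{V}_{m,p})=c_1\bigl(\varphi_\star(-m(K_{\Gamma/Y}+D)+c_{m,p}E''+p(A+m_0E))\bigr)-\tfrac{p\,r}{r_{m_0}}c_1\bigl(\det\varphi_\star(A+m_0E)\bigr),
\]
with $r=\rank\mathcal{V}_{m,p}$. By the stabilization of determinants established inside Proposition \ref{prop-fun3}, the first term agrees with a multiple of $\det\varphi_\star(A+m_0E)$ on $Y_0$ up to a class whose pullback to $\Gamma$ is $\pi$-exceptional (via Theorem \ref{prop-fun}), and the construction of $\widetilde{A}$ is designed precisely so that this multiple cancels the second term. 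The last identity of Proposition \ref{prop-fun3} then gives $\pi_\star\varphi^\star c_1(\mathcal{V}_{m,p})=0$.

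Finally, for numerical flatness on $C$: for a general complete intersection $\overline{C}\subset X$ of members of a very ample linear system, the preimage $C=\pi^{-1}(\overline{C})$ is a smooth curve, so $\varphi^\star\mathcal{V}_{m,p}|_C$ is a vector bundle. Pulling back the $\varepsilon$-metrics witnessing weak positivity of $\mathcal{V}_{m,p}$ and restricting to $C$ gives $\sqrt{-1}\Theta\succeq-\varepsilon\,\varphi^\star\omega_Y|_C\otimes\mathrm{Id}$, where $\varphi^\star\omega_Y|_C$ is a bounded $(1,1)$-form, so weak positivity descends to $C$. The projection formula yields
$\deg\bigl(\varphi^\star\mathcal{V}_{m,p}|_C\bigr)=\pi_\star\varphi^\star c_1(\mathcal{V}_{m,p})\cdot[\overline{C}]=0$, and Proposition \ref{propnumflat} then identifies $\varphi^\star\mathcal{V}_{m,p}|_C$ as a numerically flat vector bundle.
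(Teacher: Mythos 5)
Your outline is sound on two of the three parts: you correctly note that Proposition \ref{prop-fun3} only gives pseudo-effectivity of $\pi_\star\widetilde{A}$, not of $\widetilde{A}$ on $\Gamma$, so an effective $\pi$-exceptional $F$ must be inserted before invoking Lemma \ref{denew}; and your reduction of numerical flatness on $C$ to Proposition \ref{propnumflat} via the projection formula is exactly what the paper does.

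However, your argument for $\pi_\star\varphi^\star c_1(\cV_{m,p})=0$ has a genuine gap. After expanding $\widetilde{A}$ and pulling the $\varphi^\star\det$ factor out of the direct image, you need the first term $c_1\bigl(\varphi_\star(-m(K_{\Gamma/Y}+D)+c_{m,p}E''+p(A+m_0E))\bigr)$ to agree with $\tfrac{pr}{r_{m_0}}c_1\bigl(\det\varphi_\star(A+m_0E)\bigr)$ up to classes whose pullback is $\pi$-exceptional. The results you cite do not give this: Proposition \ref{prop-fun3} only compares $\det\varphi_\star(A+mE)$ against $\det\varphi_\star(A+(m+1)E)$ for varying $m$, and says nothing about the determinant after twisting by $-m(K_{\Gamma/Y}+D)+c_{m,p}E''$—which is where all the analytic content lies. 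The identity you assert is essentially equivalent to the statement being proved, so this step is circular.

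The paper's mechanism is different and is the ingredient your outline is missing. Since weak positivity already gives $\pi_\star\varphi^\star c_1(\cV_{m,p})\succeq 0$, it suffices to show $-\pi_\star\varphi^\star c_1(\cV_{m,p})\succeq 0$. For this, apply Lemma \ref{prop-fun2} to $L_0:=-m(K_{\Gamma/Y}+D)+c_{m,p}E''+p\widetilde{A}$ to produce an effective $\pi$-exceptional $F$ with $L_0+F-\tfrac{1}{r_1}\varphi^\star\det\cV_{m,p}$ pseudo-effective on $\Gamma$, and then feed this line bundle into Lemma \ref{denew} with the opposite twist $+m(K_{\Gamma/Y}+D)$. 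The relative log-canonical terms cancel, and the projection formula shows that $c_1\bigl(\varphi_\star(\text{exceptional data}+p\widetilde{A})\bigr)-\tfrac{r}{r_1}c_1(\cV_{m,p})$ is pseudo-effective; the last identity of Proposition \ref{prop-fun3} then annihilates the first term under $\pi_\star\varphi^\star$. This two-sided application of Lemma \ref{denew}, once with $-m$ and once with $+m$, is the key trick—not a direct computation of the determinant.
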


\begin{proof}
The line bundle $\widetilde{A}$ is pseudo-effective and $\varphi$-big, 
and thus  Lemma \ref{denew} implies that $\cV_{m, p}$ is weakly positively curved 
for $c_{m, p}$ large enough. 
In particular, its first Chern class $c_1 (\cV_{m, p})$ is pseudo-effective on $Y$. 
Hence it is sufficient for the proof to 
prove that $- \pi_\star \varphi^\star  c_1 (\cV_{m, p})$ is pseudo-effective.
The proof is as follows.

Thanks to Proposition \ref{prop-fun2}, 
there exists some $\pi$-exceptional effective divisor $F$ such that 
$$ 
L: =-m (K_{\Gamma/Y} +D) + c_m E''  + p \widetilde{A} + F -  \frac{1}{r_1} \cdot \varphi^\star \det \cV_{m, p} \text{ is pseudo-effective},
$$
where $r_1$ is the rank of $\cV_{m, p}$.
Applying Lemma \ref{denew} to $L$, we have 
$$
c_1\Big( \varphi_\star ( m (K_{\Gamma/Y} +D) +  d_m  E'' +  L)\Big) \text{ is pseudo-effective}
$$
for some $d_m$ large enough.
It is equivalent to say that  
$$c_1 \big(\varphi_\star (c_m E'' + d_m  E'' + F +p\widetilde{A})\big) -  \frac{r}{r_1} c_1 (\cV_{m, p}) 
\text{ is pseudo-effective on } Y.$$
From Proposition \ref{prop-fun3}, it follows that 
$$
\pi_\star \varphi^\star c_1 \big(\varphi_\star (c_m E'' + d_m  E'' + F + \widetilde{A})\big)  =0.
$$
Therefore $ - \pi_\star \varphi^\star c_1 (\cV_{m, p})$ is pseudo-effective on $X$. 
The proposition is proved.
\end{proof}

\section{Structure of rationally connected fibrations}\label{Sec4}

\subsection{Proof of Theorem \ref{thm-main}}\label{Sec4-1}

This subsection is devoted to the proof of Theorem \ref{thm-main}.

\begin{proof}[Proof of Theorem \ref{thm-main}]

We first show that 
the inequality $\nd (-(K_X+\Delta)) \geq \nd (-(K_{X_y}+\Delta_{X_y}))$ holds 
by proving the following extension theorem 
from a general fiber $X_y$ to the ambient space $X$. 


\begin{prop}\label{prop-ext}
Let $(X, \Delta)$ be a projective klt pair such that $-(K_X+\Delta)$ is nef. 
Let $\psi: X \dashrightarrow Y$ be an almost holomorphic map from $X$ 
to a smooth projective variety $Y$. 
Then there exists an ample line bundle $A$ on $X$ such that the restriction map 
to a general fiber $X_y$ of $\psi$ 
$$
H^{0}(X, -m(K_X+\Delta)+A) \rightarrow H^{0}({X_y}, -m(K_{X_y}+\Delta_{X_y})+A|_{X_y})
$$
is surjective 
for any $m \geq 1$ with $m(K_X+\Delta)$ Cartier. 

In particular, we have  
$$
\nd (-(K_X+\Delta)) \geq \nd (-(K_{X_y}+\Delta_{X_y})) 
$$
for a general fiber $X_y$ of an MRC fibration $\psi: X \dashrightarrow Y$ of $X$. 
\end{prop}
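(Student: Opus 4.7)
The plan is to prove the surjectivity by an Ohsawa--Takegoshi extension argument on a log resolution of $(X,\Delta)$, and then deduce the numerical dimension inequality from the polynomial-growth characterization of $\nd$. First I would pass to a log resolution $\pi \colon \Gamma \to X$ of $(X, \Delta)$ that also resolves the indeterminacy of $\psi$, so that $\psi \circ \pi$ factors through a morphism $\varphi \colon \Gamma \to Y$ whose general fiber $\Gamma_y$ is smooth and birational to $X_y$ via $\pi|_{\Gamma_y}$. The log pullback formula gives
\[
K_\Gamma + \Delta' = \pi^\star(K_X + \Delta) + E',
\]
with $\Delta'$ an effective klt $\mathbb{Q}$-divisor and $E'$ an effective $\pi$-exceptional $\mathbb{Q}$-divisor having no common component with $\Delta'$. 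Since $X$ is normal and $E'$ is $\pi$-exceptional, $\pi_\star \mathcal{O}_\Gamma(\lceil E' \rceil) = \mathcal{O}_X$, and likewise over a general fiber since $\pi|_{\Gamma_y}$ is a resolution of the normal variety $X_y$. Setting $L_0 := -m(K_X+\Delta) + A$, this reduces the desired surjectivity on $X$ to extension of sections of $\pi^\star L_0 + \lceil E' \rceil$ from $\Gamma_y$ to $\Gamma$.

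Next, by adjunction on $\Gamma$ one has $\pi^\star L_0 + \lceil E' \rceil = K_\Gamma + N$, where $N$ is $\mathbb{Q}$-linearly equivalent to $-(m+1)\pi^\star(K_X + \Delta) + \Delta' + \pi^\star A$ (up to an effective $\pi$-exceptional correction absorbed into $\lceil E' \rceil - E'$). Since $-(K_X + \Delta)$ is nef and $A$ is ample on $X$, the $\mathbb{Q}$-divisor $-(m+1)(K_X + \Delta) + A$ is ample on $X$ for every admissible $m \geq 1$; pulling back its smooth positively curved metric and combining with the canonical singular metric on the klt divisor $\Delta'$ produces a singular hermitian metric $h_N$ on $N$ with $\sqrt{-1}\Theta_{h_N}(N) \succeq 0$ on $\Gamma$ and $\mathcal{I}(h_N|_{\Gamma_y}) = \mathcal{O}_{\Gamma_y}$ for general $y$, using that $\Delta'|_{\Gamma_y}$ remains klt on the smooth fiber by a Bertini-type argument. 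A standard Ohsawa--Takegoshi extension for fibrations then lifts each section of $(K_\Gamma + N)|_{\Gamma_y}$ to a section on $\Gamma$, and pushforward by $\pi$ yields the desired surjectivity $H^0(X, L_0) \twoheadrightarrow H^0(X_y, L_0|_{X_y})$.

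The numerical dimension inequality then follows because this surjectivity gives $h^0(X, -m(K_X+\Delta) + A) \geq h^0(X_y, -m(K_{X_y}+\Delta_{X_y}) + A|_{X_y})$ for all admissible $m$, and for nef divisors the numerical dimension is characterized by the maximal polynomial growth rate of $h^0(X, mL + A)$ as $m \to \infty$. The main obstacle is making the Ohsawa--Takegoshi step rigorous in this singular-metric setting: the pullback of an ample metric from $X$ to $\Gamma$ is only semi-positive on $\Gamma$ (it degenerates along $\pi$-fibers), so one must use an OT variant that accommodates this, and one must verify that the klt condition on $\Delta'$ is preserved after restriction to a general smooth fiber $\Gamma_y$ so that the multiplier ideal $\mathcal{I}(h_N|_{\Gamma_y})$ is indeed trivial.
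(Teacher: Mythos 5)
Your overall strategy coincides with the paper's: pass to a log resolution $\pi\colon\Gamma\to X$ resolving $\psi$, use the log pullback formula to transfer the problem to $\Gamma$, run an $L^2$-extension argument upstairs, and then deduce the $\nd$ inequality from polynomial growth of $h^0$. The adjunction bookkeeping, the identification of $F':=\lceil E'\rceil-E'$ with $\lfloor \Delta'+F'\rfloor=0$, and the reduction to triviality of the multiplier ideal on a general fiber are all essentially as in the paper.

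However, the step where you invoke ``a standard Ohsawa--Takegoshi extension for fibrations" hides the real content of the proof, and as written this is a genuine gap. The fiber $\Gamma_y$ has codimension $\dim Y$ in $\Gamma$, not codimension one, so the hypersurface-to-ambient form of OT does not apply directly; and the various fibration-over-a-disk versions are codimension-one in the base and face the same problem for $\dim Y>1$. The paper's way around this is to \emph{build the fiber into a singular weight}: one chooses $\dim Y$ sections $s_1,\dots,s_{\dim Y}$ of a very ample $A_Y$ on $Y$ whose common zero is a finite set $\{y_i\}$ away from the indeterminacy locus, sets $t_i=$ (the corresponding sections of $\pi_\star\varphi^\star A_Y$), and forms the quasi-psh function $\rho = (\dim Y)\log\sum_i|t_i|_g^2$ on $X$, then pulls back to $\bar\rho=\pi^\star\rho$. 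This $\bar\rho$ has the precise log-pole singularity along $\Gamma_y$ making $\I{He^{-\bar\rho}}=\mathcal I_{\Gamma_y}$ near $\Gamma_y$, which is what turns extension-from-a-fiber into the extension problem solvable by \cite{CDM17}. You do not construct any such weight.

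Relatedly, your diagnosis of the obstacle is slightly off: the issue is not mainly that the pullback of an ample metric from $X$ is only semipositive on $\Gamma$ (that degenerate locus sits over the $\pi$-exceptional set and is handled by the $\pi$-exceptional corrections), but rather that once $\bar\rho$ is introduced you must \emph{dominate its negative curvature contribution}; for this you need a dedicated piece of ampleness. The paper arranges $\sqrt{-1}\Theta_h(\pi^\star A)+(1+\delta)\ddbar\bar\rho\geq 0$ by replacing $A$ with a large multiple, and then writes the adjoint bundle so that \emph{one} copy of $\pi^\star A$ absorbs $\bar\rho$ while the rest (together with $-(m+1)(K_X+\Delta)$, $\Delta'$, $F'$) carries the semipositive metric $H$. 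In your accounting the single $A$ in $L_0$ is fully consumed to make $-(m+1)(K_X+\Delta)+A$ ample, leaving nothing to compensate $\bar\rho$. Since the Proposition lets you \emph{choose} $A$, this is easy to repair by enlarging $A$; but as written your metric setup would not close the OT step even after you supplied the weight function. To complete the proof you should: (i) construct $\bar\rho$ as above, (ii) enlarge $A$ so that part of its curvature dominates $(1+\delta)\ddbar\bar\rho$, and (iii) cite an extension theorem of the type in \cite{CDM17} (or an equivalent codimension-$\geq 1$ version) rather than a generic ``OT for fibrations."
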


\begin{proof}
Throughout the proof, 
let $L$ be the Cartier $\mathbb{Q}$-divisor defined by $L:=-(K_X+\Delta)$ and 
$m$ be a positive integer such that $mL=-m(K_X+\Delta)$ is a Cartier divisor. 
We take a log resolution $\pi: \Gamma \to X$ of $(X, \Delta)$ such that 
it induces  a resolution $\varphi:\Gamma \to Y$ of the indeterminacy locus $B$ of $\psi$. 
We have the same commutative diagram (\ref{diagram}) as in Section \ref{Sec3}. 
We remark that a general fiber $X_y$ of $\psi$ is compact 
since $\psi$ is an almost holomorphic map. 
For a general fiber $X_y$ of $\psi$, 
the inverse image $\Gamma_y:=\pi^{-1}(X_y)$ is also a general fiber of $\varphi$, 
and thus $\Gamma_y$ is a smooth subvariety in $\Gamma$. 
Further a general fiber $\Gamma_y$ is not contained in 
any components of the exceptional locus of $\pi$. 
Then we can obtain the following commutative diagram 
$$
\xymatrix@C=40pt@R=30pt{
H^{0}(X,mL+A)  \ar[r]^{\pi^\star \quad}_{\cong \quad}  \ar[d]
& H^{0}(\Gamma,\pi^\star (mL+A))  \ar[r]^{+F\quad}_{\cong\quad} \ar[d]
& H^{0}(\Gamma,\pi^\star (mL+A)+F) \ar[d] \\ 
H^{0}(X_y,(mL+A) |_{X_y})  \ar[r]^{(\pi|_{\Gamma_y})^\star \quad}_{\quad} & 
H^{0}(\Gamma_y,\pi^\star (mL+A)|_{\Gamma_y}) 
\ar[r]\ar[r]^{+F|_{\Gamma_y}\quad}
& H^{0}(\Gamma_y,\pi^\star (mL+A)+F|_{\Gamma_y}) 
}
$$
for any ample line bundle $A$ and any $\pi$-exceptional effective divisor $F$ on $\Gamma$. 
The horizontal maps below is injective, and thus 
it is sufficient for the proof to find an ample line bundle $A$ on $X$  and 
a $\pi$-exceptional effective divisor $F$ on $\Gamma$ such that the right vertical map 
is surjective. 

We have the canonical bundle formula 
$$
K_{\Gamma}+\Delta'=\pi^\star (K_X+\Delta)+E', 
$$
where $\Delta'$ and $E'$ are the effective divisors with no common components. 
The support of $\Delta'+E'$ is normal crossing since $\pi$ is a log resolution of $(X, \Delta)$, 
and further we have $\lfloor \Delta' \rfloor=0$ since $(X,\Delta)$ has at most  klt singularities. 
Then, for the $\pi$-exceptional effective divisor $F$ defined by $F:=\lceil E' \rceil$, 
we can easily to check that 
$$
F':=F -E' \text{ is an effective $\mathbb{Q}$-divisor and } \lfloor \Delta' + F'\rfloor=0. 
$$
On the other hand, by the canonical bundle formula, 
we have 
\begin{align*}
\pi^\star (mL+2A)+F&=K_{\Gamma}+\pi^\star (mL+2A)+F-K_{\Gamma}\\
&=K_{\Gamma}+\pi^\star ((m+1)L+A)+ (\pi^\star A + \Delta' + F'). 
\end{align*}

Now we construct a quasi-psh function $\bar \rho$ on $\Gamma$ and 
a smooth hermitian metric $h $ on $\pi^\star A $ with the following properties\,$:$ 
\begin{itemize}
\item[$\bullet$] $\sqrt{-1}\Theta_{h }(\pi^\star A) + (1+\delta) \ddbar \bar \rho \geq 0$ holds 
for any $0<\delta \ll 1$.
\item[$\bullet$] $\bar \rho$ can be written as $m \log (\sum_{i=1}^m |z_{i}|^2)$ 
modulo addition of smooth functions on a neighborhood of a point in $\Gamma_y$, 
where $(z_{1}, z_{2},\dots, z_{n})$ is a local coordinate of $\Gamma$ 
such that $\Gamma_y=\{z_1=z_2=\cdots =z_m=0\}$. 
Here $n:=\dim X$ and $m:=\dim Y$. 
\end{itemize}
If $h $ and $\bar \rho$ with the above properties can be constructed, 
the singular hermitian metric 
$$ 
\text{
$H:=g_m  h  h_{\Delta' + F'}$ on the line bundle $\pi^\star ((m+1)L+A)+ (\pi^\star A + \Delta' + F')$ 
}
$$
satisfies that 
$$
\sqrt{-1}\Theta_{H}(\pi^\star ((m+1)L+A)+ (\pi^\star A + \Delta' + F')) + (1+\delta) \ddbar \bar \rho \geq 0
$$
for any $0<\delta \ll 1$. 
Here $g_m$ is a smooth hermitian metric (obtained from the nefness of $L$) 
on the line bundle $\pi^\star ( (m+1)L+A)$ with semi-positive curvature and 
$h_{\Delta' + F'}$ is the singular hermitian metric induced 
by the effective divisor $\Delta' + F'$. 
Then, by using the extension theorem in \cite{CDM17}, 
it can be shown that the induced map 
$$
H^{0}(\Gamma, \mathcal{O}_{\Gamma}(\pi^\star (mL+2A)+F)\otimes \I{H})
\to 
H^{0}(\Gamma, \mathcal{O}_{\Gamma}(\pi^\star (mL+2A)+F)\otimes \I{H}/
\I{ H e^{-\bar \rho} } )
$$
is surjective. 
Further, by the condition on the singularities of $\bar \rho$, 
we can see that 
$$
\I{H}=\mathcal{O}_{\Gamma} \quad \text{ and } \quad 
\I{He^{-\bar \rho}}=\mathcal{I}_{\Gamma_y} \text{ on a neighborhood of } \Gamma_y
$$
since $\Delta' + F'$ is a normal crossing $\mathbb{Q}$-divisor with $\lfloor \Delta' + F' \rfloor = 0$ and 
a general fiber $\Gamma_y$ transversely intersects with $\Delta' + F'$. 
This leads to the desired conclusion.

For the construction of $h $ and $\bar \rho$, 
we consider a very ample line bundle $A_Y$ on $Y$ and sections $\{s_i \}_{i=1}^{m}$ of $A_Y$ 
such that the common zero locus of $\{s_{i}\}_{i=1}^m$ is the disjoint union of 
$N$ points $\{y_i\}_{i=1}^N$
whose fibers do not intersect with the indeterminacy locus $B$ of $\psi$. 
Here $N$ is the self-intersection number of $A_{Y}$. 
The disjoint union of the fibers $F_i:=\psi^{-1}(y_i)$ is contained in 
the zero locus of the section $t_i:=\pi^\star s_i$ of 
the invertible sheaf $\pi^\star A_Y:=\pi_{\star}{\varphi}^\star A_Y$.

For a smooth hermitian metric $g$ on $\pi^\star A_Y$, the quasi-psh function $\rho$ on $X$ defined by  
$$
\rho:=m \log \sum_{i=1}^{m} |t_{i}|_g^2
$$
satisfies that $(1+\delta) \ddbar \rho \geq -C \sqrt{-1}\Theta_{g}$ for some constant $C>0$. 
Therefore we may assume that $A$ admits a smooth hermitian metric $g_A$ 
such that 
$$ 
\sqrt{-1}\Theta_{g_A}(A) + (1+\delta) \ddbar \rho \geq 0 
$$ by replacing $A$ with a positive multiple of $A$ (if necessarily) 
since $A$ is an ample line bundle on $X$. 
Strictly speaking, 
we need to consider hermitian metrics on invertible sheaves on complex spaces 
since $X$ may not be smooth. 
However it is not difficult to check that the above argument still works 
even when $X$ has singularities. 
Then it is easy to see that 
$h:=\pi^\star  g_A$ and $\bar \rho:=\pi^\star  \rho$ satisfy the desired properties. 
\end{proof}


\smallskip
It remains to prove that the converse inequality 
\begin{equation}\label{otherside}
\nd (-(K_X+\Delta)) \leq \nd (-(K_{X_y}+\Delta_{_{X_y}})).
\end{equation}
holds. 
This inequality follows from Proposition \ref{prop-rank} and 
the certain flatness of the direct image sheaves in Section \ref{Sec3}. 
In the rest of this subsection, 
we consider an MRC fibration $\psi: X \dashrightarrow Y$ 
under setting (\ref{setting}) and 
we use freely the notations in Section \ref{Sec3}. 
We put $D:=\Delta'$ and $E'':= E'-\varphi^{\star}N$
by following Remark \ref{rem-set}. 
Recall that $N$ is an effective $\mathbb{Q}$-divisor such that $N \sim_\mathbb{Q} K_Y$ 
and the support of $N$ is contained in $E$.

By the the canonical bundle formula in Remark \ref{rem-set} and 
the definitions of $\cV_{m,1} $ and $\widetilde{A}$ 
(see Proposition \ref{prop-fun3} and Proposition \ref{prop-fun5}), 
we have 
\begin{align*}
\cV_{m,1} &= \varphi_\star (-m (K_{\Gamma/Y} +\Delta') + c_{m,1} (E'-\varphi^{\star}N) + \widetilde{A} )\\
&=\varphi_\star (-m \pi^\star(K_{X} +\Delta) + m N +(c_{m,1} -m) E' + A + m_0 E)- 
\frac{1}{r_{m_0}}\det \varphi_\star (A+m_0 E). 
\end{align*}
Then, by the definition of the numerical dimension, we can see that 
\begin{align*}
&h^{0}(Y, \cV_{m,1} \otimes \frac{1}{r_{m_0}}\det \varphi_\star (A+m_0 E))\\
=&h^{0}(X, -m \pi^\star(K_{X} +\Delta) + m N +(c_m -m) E' + A+m_0 E)\\
\approx & O(m^{\nd (-(K_X+\Delta))}). 
\end{align*}
for $c_{m,1} \gg m >0$. 
Here we used that fact that $N$, $E'$, and $E$ are $\pi$-exceptional divisors. 
On the other hand, we can see that the sheaf $\varphi^{\star}\cV_{m,1}$ 
satisfies the assumptions in Proposition \ref{prop-rank} 
by Proposition \ref{prop-fun5}. 
Hence, from Proposition \ref{prop-rank}, we can obtain that  
\begin{align*}
h^{0}(Y, \cV_{m,1} \otimes \frac{1}{r_{m_0}}\det \varphi_\star (A+m_0 E))&=
h^{0}(\Gamma, \varphi^\star \cV_{m,1} \otimes \frac{1}{r_{m_0}} \varphi^\star\det \varphi_\star (A+m_0 E))\\
&\leq D \cdot \rank \cV_{m,1} \\
&\approx  O(m^{\nd (-(K_ {X_y}+\Delta_{X_y}))}). 
\end{align*}
Thus we obtain the desired inequality. 
\end{proof}

\subsection{Proof of Theorem \ref{splitgener}}
In this subsection, 
we prove the following result, which can be seen as a generalization of \cite{CH}.

\begin{theo}\label{splitgener}
Under setting $($\ref{setting}$)$, 
we further assume that $\psi: X \to Y$ is an MRC fibration 
between smooth projective varieties. 
Then we have the splitting
$$
T_X = V_1 \oplus V_2 \qquad\text{ on } X,
$$
where $V_1$ and $V_2$ are subbundles of $T_X$, and $V_1$ coincides with the $\pi_\star (T_{\Gamma/Y})$.

In particular, thanks to \cite{Hor07}, 
we can choose an MRC fibration $\psi : X\rightarrow Y$ 
to be a locally trivial fibration, namely,
for every small open set $U\subset Y$, we have the isomorphism $p: \psi^{-1} (U)\cong  U\times F$ that commutes with the fibration:
$$
\xymatrix{
\psi^{-1} (U) \ar[rd]_\varphi  \ar[rr]^p  && U \times F \ar[ld]
\\
& U &}
$$
where $F$ is the general fiber of $\psi$.
\end{theo}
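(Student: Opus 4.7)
The plan is to feed the positivity apparatus of Sections \ref{Sec2}--\ref{Sec3} into the tangent sequence
\begin{equation*}
0 \to T_{X/Y} \to T_X \to \psi^\star T_Y \to 0,
\end{equation*}
kill its Atiyah (extension) class using the numerical flatness of suitable direct image sheaves on $Y$, and then invoke H\"oring's theorem \cite{Hor07} to upgrade the resulting splitting to a locally trivial MRC model. Since $\psi$ is already a morphism between smooth projective varieties, I would first specialize Setting \ref{setting} to the clean case $\pi=\mathrm{id}$, $\Gamma=X$, $\varphi=\psi$; Remark \ref{rem-set} then gives $\kappa(Y)=0$, which I would promote to numerical triviality $c_1(Y)=0$ by applying Lemma \ref{withpairlem} to the anti-canonical direct images and combining with $\kappa(Y)\geq 0$.

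The main analytic input is Proposition \ref{prop-fun5}: for every $m,p\geq 1$ the sheaf
\begin{equation*}
\cV_{m,p}=\psi_\star\bigl(-m(K_{X/Y}+D)+c_{m,p}E''+p\widetilde A\bigr)
\end{equation*}
is weakly positively curved on $Y$ with vanishing first Chern class. Proposition \ref{propnumflat} then upgrades the reflexive hull $\cV_{m,p}^{\star\star}$ to a numerically flat vector bundle on $Y$, and Theorem \ref{thm-numflat} exhibits it as an iterated extension of hermitian flat pieces. Pulling back and restricting to a general complete intersection curve $C\subset X$ cut out by members of $\psi^\star A_Y$ (where $A_Y$ is a very ample line bundle on $Y$), the restriction $\psi^\star \cV_{m,p}^{\star\star}|_{C}$ is numerically flat, exactly as in the last assertion of Proposition \ref{prop-fun5}.

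From the numerical flatness on $Y$ I would extract a splitting of the tangent sequence. The Atiyah class lives in $H^1(X,\Omega^1_{X/Y}\otimes\psi^\star T_Y)$; by Leray this group is controlled by $H^0(Y,R^1\psi_\star \Omega^1_{X/Y}\otimes T_Y)$ together with a pseudoeffective-type obstruction which the numerical flatness of $\cV_{m,p}^{\star\star}$ rules out. Concretely, I would combine the vanishing result Lemma \ref{lemma-vanishing} with the Serre duality applied on the complete intersection curve $C$ to see that the restriction of the Atiyah class vanishes on $C$; since $C$ is a general complete intersection and the obstruction sheaf is numerically trivial on $Y$ (thanks to $c_1(Y)=0$ and Step~2), this curve-wise vanishing propagates to a global vanishing and yields the desired decomposition $T_X=V_1\oplus V_2$ with $V_1=T_{X/Y}=\pi_\star T_{\Gamma/Y}$ and $V_2\simeq \psi^\star T_Y$. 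The ``in particular'' assertion is then an immediate application of H\"oring's theorem \cite{Hor07}.

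The hard part will be this final transfer from numerical flatness of $\cV_{m,p}^{\star\star}$ on $Y$ to the genuine vanishing of the Atiyah class on $X$. The subtlety lies in keeping track of the klt boundary $D$ and the exceptional corrections $E,E''$ uniformly in $(m,p)$, and in using the relative positivity of the auxiliary twist $\widetilde A$ constructed in Proposition \ref{prop-fun3} together with the sharp section estimate of Proposition \ref{prop-rank} to convert a statement on the base into a statement on the total space. Once this bridge is built, both the splitting and H\"oring's local-triviality step follow essentially formally from the tools already assembled.
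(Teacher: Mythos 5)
Your proposal diverges substantially from the paper's actual argument, and the route you sketch has a genuine gap at its central step. The paper does not argue via the Atiyah class of the tangent sequence
\[
0 \to T_{X/Y} \to T_X \to \psi^\star T_Y \to 0.
\]
Instead, it works with the sheaves $\cV_p := \varphi_\star(c_{0,p}E'' + p\widetilde A)$ from Proposition~\ref{prop-fun5}, pulls them back to $X$, applies Proposition~\ref{propnumflat} to get a numerically flat (hence flat) bundle $(\pi_\star\varphi^\star\cV_p)^{\star\star}$ \emph{on $X$} (not on $Y$), and then proves that the holomorphic flat connection descends to a flat connection on $\cV_p$ over $Y_0$. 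The descent step hinges on the observation that the connection form, being holomorphic of type $(1,0)$, restricts to a holomorphic one-form on each rationally connected fiber $X_y$, hence vanishes there, hence is a pull-back from the base; finite $L^2$-integrability via Fubini then extends it across the bad locus. With a $\psi$-ample line bundle $\widetilde A$ whose direct images are flat, one runs the Cao--H\"oring relative-embedding argument (as in \cite[3.C]{CH}) to produce a birational modification which \emph{is} a locally trivial fibration and contracts only $\pi$-exceptional divisors; the splitting of $T_X$ in codimension~$\geq 2$ is then read off from the locally trivial model and extended because $X$ is smooth. None of this is an obstruction-theoretic argument.

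Your proposed argument has at least two concrete problems. First, the extension class of the tangent sequence lives in $\mathrm{Ext}^1(\psi^\star T_Y, T_{X/Y}) = H^1(X, T_{X/Y}\otimes \psi^\star\Omega^1_Y)$, not in $H^1(X, \Omega^1_{X/Y}\otimes\psi^\star T_Y)$ as you wrote, and even granting the fix you do not explain how the flatness of $\cV_{m,p}^{\star\star}$ on $Y$ lands you in a position to compute anything about this group. Second, and more seriously, the key move ``curve-wise vanishing of the Atiyah class propagates to global vanishing because the obstruction sheaf is numerically trivial on $Y$'' is not a valid cohomological principle: nonzero classes in $H^1$ of a coherent sheaf can certainly restrict to zero on every member of a covering family of curves, and there is no Hard-Lefschetz or semipositivity input here that would forbid this. (Contrast this with the paper's Proposition~\ref{prop-rank}, where the complete-intersection curve is used only to bound \emph{dimensions of $H^0$}, a much tamer manipulation.) Finally, note that $c_1(Y)=0$ is not available at the time Theorem~\ref{splitgener} is proved; it is established only in Theorem~\ref{thm-maincam}, using the locally trivial structure that Theorem~\ref{splitgener} provides as input. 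Using it at this stage makes the argument circular.
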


\begin{rem}
The main difference between the above theorem and \cite{CH} is that 
$X$ is not necessary simply connected here. 
Following the arguments in \cite{CH}, we have already constructed 
the sheaves $\cV_{m, p}$ on $Y$ in Proposition \ref{prop-fun5}, 
which are weakly positively curved and satisfy that 
$\pi_\star (\varphi^\star c_1 (\cV_{m, p}))= 0$.
If $X$ is simply connected, we can thus easily prove that $\cV_{m, p}$ is a trivial vector bundle over $Y_0$, and thus $\varphi$ is  birational to a product. 
In the case $X$ is not simply connected, the argument here is more involved. 
\end{rem}

\begin{proof}
Let $\widetilde{A}$ be the $\varphi$-ample pseudo-effective line bundle on $\Gamma$ 
in Proposition \ref{prop-fun5}. We set 
$$
\cV_p := \varphi_\star (c_{0,p} E'' + p \widetilde{A}).
$$
Thanks to Proposition \ref{prop-fun5}, we know that $\pi_\star \varphi^\star \cV_p$ is weakly positively curved and $c_1 (\pi_\star \varphi^\star \mathcal{V}_p)=0$.
By using Proposition \ref{propnumflat}, 
its reflexive hull $(\pi_\star \varphi^\star \mathcal{V}_p)^{\star\star}$ is numerically flat vector bundle on $X$. In particular, it equips with a holomorphic flat connection $D$.

\smallskip

We check that the flat connection $D$ induces a flat connection on $\cV_p$ over $Y_0$ (after quitting a subvariety of codimension at least $2$, 
we can suppose that $\cV_p$ is locally free on $Y_0$).
We fix a hermitian metric $h$ on $\cV_p |_{Y_0}$ and 
let $H$ be the hermitian metric on $\varphi^\star \mathcal{V}_p$ defined by the pull-back $\varphi^{\star} h$.
Let $D_{H}$ be the Chern connection associated to 
the hermitian vector bundle $(\varphi^\star \mathcal{V}_p |_{X_0}, H)$, 
where $X_0:=\varphi^{-1}(Y_0)$.
For a local frame $\{f_i\}_{i=1}^r$ of $\cV_p$ on an open set $U \subset Y_0$, 
the pull-back $\{\varphi^{\star}f_i\}_{i=1}^r$ gives a local trivialization of $\varphi^{\star} \cV_p$ 
on $\varphi^{-1}(U) \subset X_0$. 
Let 
$$
\Gamma_H \text{ and } 
\Gamma \in C^{\infty} _{1,0} (\varphi^{-1}(U), \eend (\varphi^\star \cV_p )) 
$$ 
be the connection form of $D_H$ (resp. $D$)  
on $\varphi^{-1}(U) \subset X_0$. 
Then we have  
$$
D = D_H + \Gamma- \Gamma_H.  
$$
It follows that $\Gamma_H$ is obtained from the pull-back of the connection form of $h$ 
from $H=\varphi^{\star}h$. 
Hence $\Gamma$ can be regarded as a matrix of the $(1,0)$-form $\Gamma_{ij}$ 
defined on $\varphi^{-1}(U) $ by the fixed trivialization. 
Hence it is sufficient to show that $\Gamma_{ij}$ is also obtained 
from the pull-back. 
It follows that 
\begin{equation}\label{flatcc}
D \Gamma + \Gamma \wedge \Gamma=0.  
\end{equation}
since $D$ is a flat connection. 
The $(1,1)$-part of the left hand side is $\dbar \Gamma$ 
since $\Gamma$ is of $(1,0)$-type. 
This implies that $\Gamma_{ij}$ is a holomorphic $1$-form defined on $\varphi^{-1}(U)$. 

A general fiber $X_y$ has no holomorphic forms as it is a rationally connected manifold, 
and thus the restriction $\Gamma_{ij}|_{X_y}$ should be identically zero. 
Hence there exists a Zariski open set $Y_1 \subset Y_0$ 
and holomorphic $1$-forms $\eta_{ij}$ on $Y_1$
such that $\Gamma_{ij}=\varphi^{\star} \eta_{ij}$ holds on $Y_1$. 
Further, by taking a smaller Zariski open set $Y_1$ (if necessarily), 
we may assume that $\varphi: X_1:=\varphi^{-1}(Y_1) \to Y_1$ is 
a smooth morphism with rationally connected fibers. 

We now show that $\eta_{ij}$ can be extended to the holomorphic $1$-form on $Y_0$. 
We first take a open neighborhood $U$ of an arbitrary point in $Y_0 \setminus Y_1$ 
and a local coordinate $(z_1, z_2, \dots, z_m)$ on $U$, where $m:=\dim Y$. 
The holomorphic $1$-form $\eta_{ij}$ on $Y_1$ can be locally written as 
$$
\eta_{ij}=\sum_{k=1}^{m} f_k dz_k, 
$$
where $f_k$ is a holomorphic function on $U \cap Y_1$. 
To extend $f_k$ to the holomorphic function on $U$, 
we consider the $m$-form 
$$
F:=f_k dz_1\wedge dz_2 \cdots  \wedge dz_m. 
$$
By Fubini's theorem, 
we obtain 
\begin{align*}
\int_{y \in U \cap Y_1}  c_m\,  F \wedge \overline{F}  \int_{X_y} \omega^{n-m} 
=  \int_{\varphi^{-1}(U \cap Y_1)} \varphi^{\star}F \wedge \overline{\varphi^{\star}F} \wedge \omega^{n-m}, 
\end{align*}
where $c_m:=\sqrt{-1}^{m^2}$ and $\omega$ is a K\"ahler form on $X$. 
Since $\Gamma_{ij}=\varphi^\star \eta_{ij}$ is a holomorphic $1$-form on $X_0$,
the integrand $\varphi^\star F$ in the right hand side can be extended to $X_0$, 
and thus the right hand side converges. 
The fiber integral of $ \omega^{n-m}$ over $X_y$ does not depend on $y \in U \cap Y_1$.
Hence $F$ is $L^2$-integrable, and thus it can be extended by the Riemann extension theorem. 
For  the Chern connection $D_h$ of $(\cV_p |_{Y_0}, h) $, 
it is easy to check that 
$D_{\rm{flat}} := D_h + (\eta_{i,j})$ defines a flat connection of $\cV_p$ over $Y_0$. 
\medskip
 
Now we are in the same situation as in \cite[3.C]{CH}, namely $\widetilde{A}$ is a $\varphi$-ample line bundle and $\cV_p = \varphi_\star (c_{0,p} E' + p \widetilde{A})$ 
is flat on $Y_0$ for every $p\in\mathbb{Z}_+$. Following the same argument as in \cite[3.C]{CH}, the flat connection 
$ (\cV_p, D_{\rm{flat}})$ over $Y_0$ induces a birational morphism from $\varphi: X_0 \rightarrow Y_0$ 
to a locally  trivial fibration, and the birational morphism does not contract any non $\pi$-exceptional divisor in $X_0$.
It induces thus a splitting:
$$T_X = V_1 \oplus V_2 \qquad\text{on } X\setminus S,$$
for some subvariety $S$ of codimension at least $2$ in $X$. 
As $X$ is smooth, the splitting can be extended on the total space and the theorem is proved.
\end{proof}
\bigskip

\subsection{Proof of Theorem \ref{thm-maincam}}
\label{subsec-4.3}
Finally we prove Theorem \ref{thm-maincam}.

\begin{proof}[Proof of Theorem  \ref{thm-maincam}]
Thanks to Theorem \ref{splitgener}, we can take an MRC fibration $\psi : X\rightarrow Y$ such that it is locally trivial fibration and $c_1 (Y)=0$. 
We need to prove that the identification $p: \psi^{-1}(U) \cong  U \times F$ in Theorem \ref{splitgener} identifies also the pair $\Delta$. I
\smallskip

Let $\Delta = \Delta^{\hor} +\Delta^{\verti}$, where $\Delta^{\verti}$ is the vertical part 
(that is, $\psi_\star (\Delta^{\verti}) \subsetneq Y$) and $\Delta^{\hor}$ is the horizontal part.
We first prove that $\Delta^{\verti} = 0$. 
Indeed, we have
$$
-\Delta^{\verti} -K_Y = K_{X/Y} - (K_X +\Delta) + \Delta^{\hor}.
$$
Note that the restriction of $K_{X/Y} - (K_X +\Delta) + \Delta^{\hor} $ 
on the general fiber of $\psi$ is effective. 
By applying \cite{BP08}, we can see that 
$K_{X/Y} - (K_X +\Delta) + \Delta^{\hor}$ is pseudo-effective. 
As a consequence, the divisor $-\Delta^{\verti} -K_Y$ is pseudo-effective.
Hence we obtain $\Delta^{\verti} = 0$ by $c_1 (Y)=0$.

Let $\widetilde{A}$ be the line bundle constructed in Proposition \ref{prop-fun5}. 
We recall that $\widetilde{A}$ is pseudo-effective and 
$\psi_\star (q \widetilde{A})$ is numerically flat for every $q\in \mathbb{Z}_+$.  
Now we prove that 
$\psi_\star (p\Delta + q \widetilde{A})$ is a numerically flat vector bundle on $Y$ 
for every  $p, q\in \mathbb{Z}_+$.



By Lemma \ref{withpairlem}, 
we know that  $\psi_\star (p\Delta + q \widetilde{A} )$ is weakly positively curved. 
Hence $\theta$ defined by 
$$
\theta := c_1 \left(\psi_\star (p\Delta +q \widetilde{A}) \right)
$$
is also pseudo-effective. 
It is sufficient to show that $\theta=0$ by Proposition \ref{propnumflat}. 
It follows that 
$$
L := p\Delta +q \widetilde{A} -\frac{1}{r} \psi^\star\theta
$$ 
is pseudo-effective 
from Lemma \ref{prop-fun2} and pseudo-effectivity of $p\Delta +q \widetilde{A}$. 
Here $r$ is the rank of $\psi_\star (p\Delta + q \widetilde{A} )$. 
Further $L$ is also $\psi$-relatively big by the definition. 
Therefore, by applying Lemma \ref{thm-cur} to $m \gg p$ and the following form:   
$$
m K_{X/Y} -m (K_{X/Y} +\Delta) +(m-p)\Delta + L =  q \widetilde{A} -\frac{1}{r} \psi^\star\theta,
$$
we can see that 
$\psi_\star (q \widetilde{A} -(1/r) f^\star\theta )$ is weakly positively curved.
This implies that 
$$
c_1 (\psi_\star (q \widetilde{A})) \geq c \theta
$$ for some constant $c>0$. 
On the other hand, by Proposition \ref{prop-fun5}, 
we have already known that $c_1 (\psi_\star (q \widetilde{A}))=0$, 
which implies that $\theta =0$. 
Then, by applying Proposition \ref{isotri}, the theorem is proved.
\end{proof}

\section{sRC quotients for orbifolds with nef anti-canonical divisors}\label{sRC}

\subsection{Orbifold surfaces with nef anti-canonical divisors}
The aim of this section is to prove Conjecture \ref{mainconsRC} for projective surfaces. 
We remark that Theorem \ref{thm-sRC-intro} can be derived from Theorem \ref{thm-slope} and Theorem \ref{product} below. 

\begin{theo}\label{thm-slope}
Let $(S, D)$ be a klt pair such that $S$ is a smooth surface, 
the support of $D$ is normal crossing, and $- (K_S +D)$ is nef. 
Then there exists a fibration $\rho : (S, D)\rightarrow (R, D_R)$ onto the orbifold base $(R, D_R)$
with the following properties$:$
\begin{enumerate}
\item $(R, D_R)$ is a klt pair such that $R$ is smooth and $c_1 (K_R + D_R)=0$.
\item General orbifold fibers are sRC. 
\item The fibration is locally trivial with respect to pairs, namely, for any small open set $U \subset R$, we have the isomorphism
$$(\rho^{-1} (U), D)\cong  (U, D_R |_U) \times (S_r, D |_{S_r})$$
over $U \subset R$. Here $S_r$ is a general fiber of $\rho$. 
\end{enumerate}
In particular, Conjecture \ref{mainconsRC} is true in the two dimensional cases. 
\end{theo}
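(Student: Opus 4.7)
The plan is to combine Campana's sRC quotient theorem (Theorem \ref{thm-cam}) with the direct image and numerical flatness machinery of Sections \ref{Sec2} and \ref{Sec3}, following the blueprint of the proof of Theorem \ref{thm-maincam}. First I would apply Theorem \ref{thm-cam} to $(S,D)$ to obtain an orbifold birational model together with an orbifold morphism onto the orbifold base $(R, D_R)$ whose general orbifold fibers are sRC and which satisfies $\kappa(K_R+D_R) = \nd(K_R+D_R) = 0$ (the last equality being the consequence of the nefness of $-(K_S+D)$ stated in Theorem \ref{thm-cam}). Since $S$ is a surface, $\dim R \in \{0,1,2\}$, and the two extremal cases are immediate: for $\dim R = 0$ the pair $(S,D)$ is itself sRC, and for $\dim R = 2$ one has $K_S+D \equiv 0$ and takes $\rho = \operatorname{id}_S$. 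The substantive case is $\dim R = 1$.

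For $\dim R = 1$, a dimension-two argument lets one descend Campana's birational model so that $\rho$ is defined directly on $S$, as a morphism to a smooth projective curve with general fiber a smooth rational curve. The conditions $\nd(K_R+D_R) = \kappa(K_R+D_R) = 0$ combined with the orbifold structure coming from the multiple fibers of $\rho$ force $(R, D_R)$ to be either an elliptic curve with $D_R = 0$, or $\mathbb{P}^1$ with an orbifold divisor $D_R$ of total $\mathbb{Q}$-degree $2$. In either case $(R, D_R)$ is klt, $R$ is smooth, and $c_1(K_R+D_R) = 0$, giving property (1); property (2) is inherited directly from Theorem \ref{thm-cam}.

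The remaining content, the local triviality with respect to the pair in (3), is obtained by the same scheme as Theorem \ref{thm-maincam}. For a $\rho$-very ample line bundle $L$ on $S$ and $m \gg 1$, the direct images $\rho_\star(mL)$ and $\rho_\star(mL \otimes \mathcal{I}_D)$ are shown to be numerically flat on $R$: Lemma \ref{withpairlem} provides weak positive curvature, while the same Chern-class chase as in the proof of Theorem \ref{thm-maincam}, combined with $c_1(K_R+D_R) = 0$, kills the first Chern class, so that Proposition \ref{propnumflat} applies. For the sheaf with the $\mathcal{I}_D$-twist I would use the short exact sequence
\[
0 \to \rho_\star(mL \otimes \mathcal{I}_D) \to \rho_\star(mL) \to (\rho|_D)_\star\bigl((mL)|_D\bigr) \to 0
\]
and verify numerical flatness of the third term by applying Lemma \ref{withpairlem} to the restriction $\rho|_D$, which is finite over its image thanks to the surface dimension. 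Once both $\rho_\star(mL)$ and $\rho_\star(mL \otimes \mathcal{I}_D)$ are known to be numerically flat, Proposition \ref{isotri} yields the local triviality of $(S,D)$ with respect to $\rho$.

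The main obstacle is handling the ideal sheaf $\mathcal{I}_D$ in the direct image framework: the positivity theorems of Section \ref{Sec2-1} apply most transparently to pluri-log-(anti)canonical divisors rather than to divisor-twisted line bundles, so one must carefully unwind the klt condition and the normal crossing structure of $D$ to reduce to a setting where Theorem \ref{thm-cur} and Lemma \ref{withpairlem} apply. The two-dimensional hypothesis is what makes this tractable, because the horizontal part of $D$ consists of finitely many sections or multi-sections of $\rho$ over the curve $R$, so the direct image along the boundary behaves as well as the direct image along the total space.
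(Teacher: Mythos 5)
Your overall plan—run the numerical flatness machinery directly on the sRC quotient—is a genuinely different route from the paper, but it has a gap precisely at the step you flag as ``the main obstacle,'' and I don't think the gap closes without essentially re-proving Theorem~\ref{product}. The paper instead splits on $h^1(S,\mathcal{O}_S)$. When $h^1\neq 0$, it avoids the orbifold base entirely by invoking Theorem~\ref{thm-maincam}: the MRC base $Y$ is an elliptic curve with $c_1(Y)=0$, the already-established local triviality of $(S,D)\to Y$ hands over property~(3) for free, and a short intersection-degree dichotomy along the fiber handles the sRC identification (with $D_Y=0$). When $h^1=0$, i.e.\ $S$ rational and $R=\mathbb{P}^1$, the paper does \emph{not} use direct-image positivity at all; it proves Theorem~\ref{product} by an explicit surface argument: pass from $S$ to a relatively minimal model $P=\mathbb{F}_m$ via Lemmas~\ref{l1} and~\ref{l2}, compute intersection numbers on $\mathbb{F}_m$ in Lemma~\ref{l3} to force $m=0$ and identify the product, then rule out any extra blow-up with Lemma~\ref{lproduct}.

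The concrete problem with your route to property~(3) is that the weak positive curvature input is missing once $D_R\neq 0$. Lemma~\ref{withpairlem} needs $-(K_{S/R}+\Delta)$ nef, and
\[
-(K_{S/R}+D)=-(K_S+D)+\rho^\star K_R\equiv -(K_S+D)-\rho^\star D_R,
\]
using $c_1(K_R+D_R)=0$; this is nef minus an effective pullback, so not nef unless $D_R=0$. The natural fix is to subtract off the vertical part and set $\Delta=D^{\mathrm{hor}}$, but then one needs $D^{\mathrm{vert}}=\rho^\star D_R$ to conclude nefness of $-(K_{S/R}+D^{\mathrm{hor}})$, and that identity is part of what you're trying to prove (in Theorem~\ref{thm-maincam} the analogous step was the first thing verified, $\Delta^{\mathrm{vert}}=0$, made possible by $c_1(Y)=0$; here only $c_1(K_R+D_R)=0$ is available, which is weaker). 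Furthermore Proposition~\ref{isotri} requires the divisor to be $p$-horizontal, so the vertical part must be split off before it can be invoked. Finally, the Chern-class chase you want to reuse lives in Section~\ref{Sec3}, whose Setting~\ref{setting} and Theorem~\ref{prop-fun} are formulated for MRC fibrations; Zhang's multiplicity control (Theorem~\ref{prop-fun}(2)) is precisely what fails for an sRC quotient, since the multiple fibers there are not $\pi$-exceptional but are the source of $D_R$. So the sketch would either reduce to the case $D_R=0$ (where it agrees with the paper's $h^1\neq 0$ branch) or would require a version of Section~\ref{Sec3} adapted to orbifold bases, which is not available; the paper circumvents this in the $h^1=0$ branch with the elementary $\mathbb{F}_m$ classification of Theorem~\ref{product}, which your proposal does not supply a substitute for.
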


\begin{rem}
Example \ref{exlc} at the end of this subsection  shows that the assertion of Conjecture \ref{mainconsRC} 
fails for general (non-klt) lc  pairs. 
\end{rem}

\begin{proof}
We first consider the case of $h^1(S,\mathcal{O}_S) \neq 0$. 
In this case, the surface $S$ is not rationally connected. 
Hence, by applying Theorem \ref{thm-maincam}, we can find a a locally trivial MRC fibration 
$\psi: S \rightarrow Y$ onto a smooth manifold $Y$ satisfying that $c_{1}(Y)=0$ and $\dim Y >0$. 

In the case of $\dim Y =2$ (that is, $\psi$ is the identity map of $S$), 
the surface $S$ is not uniruled, and thus the canonical bundle $K_S$ is pseudo-effective by \cite{BDPP} and \cite{GHS03}
Hence we can see that 
$$-D=- (K_S +D) + K_S$$ 
is pseudo-effective. 
This implies that $D =0$. 
Then the sRC fibration of $S$ is also the identity map in this case.

Now we consider the case of $\dim Y =1$. 
We remark that all the fibers are $\mathbb{P}^1$ in this case. 
If $-\deg (K_S + D)|_F>0$, then all the fibers are sRC. 
Hence, by setting $D_Y:=0$, 
the MRC fibration $\psi$ itself coincides with the sRC fibration and it  satisfies all the conditions of the theorem. 
If $-\deg (K_S + D)|_F=0$, then $-(K_S + D)$ is numerically equivalent to $aF$. 
It follows that $a \geq 0$ since $-(K_S + D)$ is nef. 
In the case of $a>0$, the pair $(S, D+\delta F)$ is still a klt pair with nef anti-log canonical divisor 
for any small $\delta>0$. 
Then, by Theorem \ref{thm-maincam}, 
the divisor $D+\delta F$ should be horizontal with respect to the MRC fibration, 
which is a contradiction.
In the case of $a=0$, the divisor $-(K_S + D)$ is numerically zero, 
and thus the identity map of $S$ gives the sRC fibration 
satisfying all the conditions of the theorem 
by setting $R:=S$ and $D_Y:=D$.

For the remaining case of $h^1(S,\mathcal{O}_S)  = 0$, 
we can obtain the more precise decomposition theorem (Theorem \ref{thm-slope}). 
Theorem \ref{thm-slope} finishes the proof of Theorem \ref{thm-slope}. 
\end{proof}

The rest of this subsection is devoted to the proof of the following theorem: 

\begin{theo}\label{product} 
Let $(S,D)$ be a klt pair such that $S$ is a compact K\"{a}hler surface,  
the support of $D$ is normal crossing, and $-(K_S+D)$ is nef. 
We further assume that $h^1(S,\mathcal{O}_S)=0$ and $(S,D)$ is not sRC. 
Then the pair $(S,D)$ is a product with respect to pairs: 
$$
(S,D)=(B,D_B)\times (\mathbb{P}^1,D_{\mathbb{P}^1}), 
$$
that is, $S=B\times \mathbb{P}^1$ and $D=p^\star(D_B) + q^{\star}( D_{\mathbb{P}^1})$.
\end{theo}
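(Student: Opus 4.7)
My plan is to extract a $\mathbb{P}^1$-fibration $\rho:S\to\mathbb{P}^1$ from the non-sRC hypothesis, show it is a smooth $\mathbb{P}^1$-bundle on $S$ itself, and apply Proposition \ref{isotri} to promote it to a product decomposition. First, since $S$ is a compact K\"ahler surface with $h^1(S,\mathcal{O}_S)=0$ it is projective; because $D\ge 0$ and $-(K_S+D)$ is nef, $-K_S$ is pseudo-effective. The hypothesis that $(S,D)$ is not sRC, combined with Theorem \ref{thm-cam}, yields an sRC quotient $\rho:(S',D')\to(B,D_B)$ onto a smooth orbifold curve with $K_B+D_B\equiv 0$, whose general fibers are smooth sRC rational curves. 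Hence $S$ is uniruled, and together with $h^1(\mathcal{O}_S)=0$ it is rational. Since $h^1(\mathcal{O}_B)\le h^1(\mathcal{O}_{S'})=0$, the base is $B\cong\mathbb{P}^1$, and numerical triviality of $K_B+D_B$ gives $\deg D_B=2$.

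Next I would reduce to the situation where $S'=S$ and $\rho$ is a smooth $\mathbb{P}^1$-bundle on $S$. Writing $D=D^{\hor}+D^{\verti}$, note that $D^{\verti}$ is a non-negative combination of fiber classes and hence nef, so $-(K_S+D^{\hor})=-(K_S+D)+D^{\verti}$ remains nef. Any $(-1)$-curve on $S$ (whether in a fiber of $\rho$ or horizontal) would, upon contraction over $B$, produce a strictly smaller model through which $\rho$ factors; combining the nefness of $-(K_S+D^{\hor})$, the KLT condition, and the birational uniqueness of the sRC quotient in Theorem \ref{thm-cam} yields a contradiction in each case. Thus $\rho:S\to\mathbb{P}^1$ is a smooth $\mathbb{P}^1$-bundle, i.e., a Hirzebruch surface $\mathbb{F}_n$, and the vertical divisor $D^{\verti}$ consists of complete reduced fibers with fractional coefficients.

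For the last step I follow the strategy of Theorem \ref{thm-maincam} applied to the horizontal pair $(S,D^{\hor})$ over $\mathbb{P}^1$. Taking a $\rho$-ample line bundle $\widetilde A$ as in Proposition \ref{prop-fun3}, Lemma \ref{withpairlem}---adapted using $K_B+D_B\equiv 0$ to compensate for $K_B\not\equiv 0$ in the relative setting---shows that $\rho_\star(p D^{\hor}+q\widetilde A)$ is weakly positively curved on $\mathbb{P}^1$ with trivial first Chern class, hence numerically flat by Proposition \ref{propnumflat}. On $\mathbb{P}^1$ any numerically flat vector bundle is trivial, so the hypotheses of Proposition \ref{isotri} are met, giving local triviality of $(S,D^{\hor})\to\mathbb{P}^1$. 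Simple-connectedness of $\mathbb{P}^1$ upgrades this to a genuine product $(S,D^{\hor})\cong\mathbb{P}^1\times(\mathbb{P}^1,D_{\mathbb{P}^1})$, forcing $n=0$. Since $D^{\verti}$ is a sum of complete fibers of the first projection $p$, one has $D^{\verti}=p^\star D_B$, and combining these yields $D=p^\star D_B+q^\star D_{\mathbb{P}^1}$ as required.

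The main obstacle is the second step: realizing $\rho$ as a morphism on $S$ itself with no exceptional or reducible fibers, rather than only on some birational model $S'$. This requires a log-MMP analysis that carefully tracks how a $(-1)$-contraction modifies the orbifold base data $(B,D_B)$ and the vertical part $D^{\verti}$, and reconciles the outcome with the birational rigidity of the sRC quotient from Theorem \ref{thm-cam}. A secondary subtlety is verifying in the third step that the relevant direct image has trivial first Chern class, since the argument of Theorem \ref{thm-maincam} used $c_1(Y)=0$ crucially, whereas here the orbifold structure $D_B$ (with $\deg D_B=2$) must play that role.
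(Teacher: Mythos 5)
Your overall skeleton agrees with the paper at the start: both invoke Theorem \ref{thm-cam} to produce the sRC quotient onto an orbifold curve $(B,D_B)$ with $K_B+D_B\equiv 0$, pin down $B=\mathbb{P}^1$ and $\deg D_B=2$ from $h^1(\mathcal{O}_S)=0$, and make $f\circ g^{-1}$ regular (your Step~1 is the content of the paper's Lemma~\ref{l1}). After that, however, you and the paper diverge in a way that leaves a real gap in your argument.

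The paper's route is: contract $S$ down to a minimal model $P$ (a Hirzebruch surface) over $B$, show $-(K_P+D_P)$ is nef (Lemma~\ref{l2}), prove $(P,D_P)$ is a product by a short explicit intersection computation with the negative section $C$ on $\mathbb{F}_m$ showing $m=0$ and $H_j\equiv C$ (Lemma~\ref{l3}), and only \emph{then} show $g:S\to P$ is the identity (Lemma~\ref{lproduct}). That last step needs the product structure of $(P,D_P)$ already in hand: the contradiction is a multiplicity bookkeeping argument — $-(K_S+D)\cdot E\ge 0$ forces $s+\mu=1$, and then $t\ge m_F$ via the orbifold base forces $1/m_C=0$ — which depends on knowing $D_P=p^\star D_B+q^\star D_{\mathbb{P}^1}$ near the blow-up center. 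You instead want to establish in your Step~2 that $S$ itself is already a smooth $\mathbb{P}^1$-bundle, \emph{before} the product structure is known. The justification you offer (\lq\lq combining nefness of $-(K_S+D^{\hor})$, the KLT condition, and the birational uniqueness of the sRC quotient yields a contradiction\rq\rq) is not an argument: contracting a $(-1)$-curve inside a fiber is always compatible with the fibration to $B$ and does not violate uniqueness of the sRC quotient, and nefness of $-(K_S+D^{\hor})$ alone does not rule it out (indeed the blow-up of $\mathbb{P}^1\times\mathbb{P}^1$ in Example~\ref{exlc}-type configurations shows that without klt one really can have such fibers). You flag this as \lq\lq the main obstacle\rq\rq, and it is exactly the content of the paper's Lemma~\ref{lproduct}; but that lemma cannot be run before Lemma~\ref{l3}. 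So your ordering does not close.

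Your Step~3 is a genuinely different — and heavier — argument than the paper's: you want to rerun the direct-image / numerical-flatness machinery of Theorem~\ref{thm-maincam} and Proposition~\ref{isotri} over the orbifold base $(\mathbb{P}^1,D_B)$, whereas the paper's Lemma~\ref{l3} is a two-line computation $-(K_P+D_P)\cdot C\ge 0$ on $\mathbb{F}_m$. Your approach would require an orbifold adaptation of Lemma~\ref{withpairlem} in which $K_B+D_B\equiv 0$ rather than $K_B\equiv 0$ supplies the cancellation, and a verification that the relevant $c_1$ vanishes; you correctly flag this as a \lq\lq secondary subtlety\rq\rq\ but do not carry it out. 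Even granting it, the missing Step~2 remains the fatal gap. I would reorganize: first pass to the minimal model $P$ as in Lemma~\ref{l2}, prove the product structure on $P$ (either by the paper's elementary Lemma~\ref{l3} or by your direct-image argument if you want), and only then return to $S$ via the multiplicity argument of Lemma~\ref{lproduct}.
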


\begin{proof}
By applying \cite[Theorem 1.5]{Cam16} (see also Theorem \ref{thm-cam}) to $(S,D)$, we obtain a birational map 
$g:(S',D')\to (S,D)$ with the following properties:
\begin{enumerate}
\item[$\bullet$] $g_{\star}(D')=D$. 
\item[$\bullet$] $S'$ is smooth and $(S',D'=\overline{D}+\Delta')$ is klt. 
\item[$\bullet$] $g^{\star}(K_S+D)=K_{S'}+\overline{D}+\Delta'-E^{\bullet}$. 
\end{enumerate}
Here $\Delta'$ and $E^{\bullet}$ are effective $g$-exceptional divisors without common component, and 
$\overline{D}$ is the strict transform of $D$. 
We moreover can take a fibration $f: (S',D')\to (B,D_B)$ such that 
$(B,D_B)$ is the orbifold base of $(S',D')$. 
We have the following commutative diagram:  
$$
\xymatrix{
(S', D') \ar[rd]_{f} \ar@{^>}[rr]^g & &  (S,D) \ar@{-->}[ld]^{f\circ g^{-1}}\\
& (B, D_B)
}
$$
On the other hand, thanks to \cite[Corollary 10.4]{Cam16} and the assumption of nefness, 
we obtain 
\begin{align}\label{eq-zero}
\nd (K_B+D_B)=\kappa(K_B+D_B)=0.
\end{align}
Then there are two possibilities: 
either $B=\mathbb{P}^1$ with $\degree (D_B)=2$, or 
$B$ is an elliptic curve with $D_B=0$.  This second case is excluded by our assumption that $h^1(S,\mathcal{O}_S)=0$.
The proof of the theorem consists of the following steps:

\medskip

\begin{enumerate}
\item[Step 1] By Lemma \ref{l1}, we show that the map 
$f\circ g^{-1}:S \dashrightarrow B$ is actually a holomorphic map. 
We may thus  assume that $g=id_S$, $S'=S$, and $D'=D$. 
Then, since $S \to B$ is a (not necessarily minimal) ruled surface, 
it can be factorized into a sequence of blow-downs $g:S\to P$ and  a $\mathbb{P}^1$-bundle $h:P\to B$. 
Here  $P$ is a smooth surface and $f=h\circ g$ holds  (see the following diagram). 
We define the divisor $D_P$ on $P$ by $D_P:=g_{\star}(D)$. 
$$
\xymatrix{
 (S,D) \ar@{>}[rd]_{f}  \ar@{>}[rr]^{g} & &   (P, D_P) \ar@{>}[ld]^{h}\\
& (B, D_B)
}
$$

\item[Step 2] By Lemma \ref{l2}, we show that $-(K_P+D_P)$ is nef. 
\item[Step 3] By Lemma \ref{l3}, we show that $(P, D_P) $ is a product of $(B,D_B)$ and  
$(\mathbb{P}^1,D_{\mathbb{P}^1})$,  as in the statement of Theorem \ref{product}. 
Here $D_{\mathbb{P}^1}$ is the restriction of $D_P$ to a general fiber of $h:P\to B$. 

\item[Step 4] By Lemma \ref{lproduct}, we show that $g=id_P$, $S=P$, and $D=D_P$.
\end{enumerate}
\end{proof}

We now state the four lemmas and prove them with the same notation as above. 

\begin{lemm}\label{l1} In the same situation as in Theorem \ref{product}, 
the map $f\circ g^{-1}$ is regular. 
\end{lemm}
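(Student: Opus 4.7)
\textbf{Plan for Lemma \ref{l1}.} Since $g\colon S'\to S$ is a birational morphism between smooth projective surfaces, it factors as a finite composition of point blow-ups $g=g_1\circ\cdots\circ g_k$. By the universal property of blow-ups applied inductively, the rational map $f\circ g^{-1}\colon S\dashrightarrow B$ extends to a morphism if and only if $f$ is constant on every irreducible $g$-exceptional curve. So I would reformulate the lemma as: for every irreducible curve $E\subset S'$ with $g(E)$ a point, $f(E)$ is a point.

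I would argue by contradiction: suppose there exists a $g$-exceptional irreducible curve $E\subset S'$ with $f(E)=B$. Being an irreducible component of the exceptional locus of a birational morphism between smooth surfaces, $E\cong \mathbb{P}^1$ and $E^2<0$. By Theorem \ref{thm-cam}, a general fiber of $f$ is sRC, hence a rationally connected curve, i.e.\ $\mathbb{P}^1$. Thus $E$ dominates $B$ via $f$, so $E$ is a (multi-)section and $E\cdot F\geq 1$ for a general fiber $F$ of $f$.

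The core of the argument is a numerical computation. Since $E$ is $g$-exceptional, $E\cdot g^{\star}(K_S+D)=0$, so the relation $g^{\star}(K_S+D)=K_{S'}+\overline{D}+\Delta'-E^{\bullet}$ gives
\[
E\cdot(K_{S'}+\overline{D}+\Delta')=E\cdot E^{\bullet}.
\]
I would combine this with adjunction on $E\cong\mathbb{P}^1$ (which gives $K_{S'}\cdot E=-2-E^2\geq -1$) and with the orbifold canonical bundle formula for $f\colon(S',D')\to(B,D_B)$, which expresses $K_{S'}+\overline{D}+\Delta'$ modulo $f$-vertical corrections as $f^{\star}(K_B+D_B)$ plus effective contributions from multi-sections. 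Equation \eqref{eq-zero} then gives $\deg(K_B+D_B)=0$, and intersecting with $E$ (a multi-section of positive degree $E\cdot F$) will make the left side of the displayed equation grow with the multi-section degree, while the right side is bounded by the behavior of $E$ inside the $g$-exceptional locus and adjunction. The contradiction emerges from the discrepancy between these two bounds together with the klt condition $\lfloor\overline{D}+\Delta'\rfloor\leq\overline{D}$, which forces all the $E\cdot\overline{D}$ and $E\cdot\Delta'$ contributions to be strictly positive.

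The main obstacle will be making the orbifold canonical bundle formula precise enough along the multi-section $E$ to extract a clean numerical inequality, since one must carefully account for how the multiplicities encoded in $D_B$ interact with the intersection $E\cdot F$ and with the $g$-exceptional correction $E^{\bullet}$. If the direct intersection-theoretic route proves too delicate, an alternative approach is to invoke the uniqueness of the sRC quotient up to orbifold birational equivalence (Theorem \ref{thm-cam}): apply the sRC construction directly to the klt pair $(S,D)$ to obtain an orbifold morphism $\rho\colon(S,D)\dashrightarrow(R,D_R)$ with $R$ orbifold-birational to $(B,D_B)$; then argue that the minimality of this construction on the already-smooth klt pair $(S,D)$ forces $\rho$ to be regular, and conclude that $f\circ g^{-1}$ coincides with $\rho$ and is hence a morphism.
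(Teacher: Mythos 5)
Your reduction of the lemma to the statement \emph{every $g$-exceptional irreducible curve $E\subset S'$ with $f(E)=B$ leads to a contradiction} matches the paper exactly, and your numerical identity $E\cdot(K_{S'}+D')=E\cdot E^{\bullet}$ together with the observations $E\cong\mathbb{P}^1$, $E^2<0$, $E$ a multi-section, is the right starting point. However, the core of the argument is missing, and both of the routes you sketch for closing the gap do not work.

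The paper's mechanism is to \emph{restrict} to the curve $E$ rather than work with the global canonical bundle formula on $S'$. Writing $D''=D'-cE$ (removing $E$'s own coefficient, if $E\subset\Delta'$), the restriction $m:=f|_E\colon(E,D''|_E)\to(B,D_{B,E})$ is an orbifold morphism between orbifold curves, with $D_{B,E}\geq D_B$ because $t_{F_b}\,m_{F_b,D'}\geq m_{b,D_B}$ at any fiber through $E$. The orbifold Riemann--Hurwitz inequality for curve morphisms then gives $\deg(K_E+D''|_E)\geq \deg m^\star(K_B+D_{B,E})\geq\deg m^\star(K_B+D_B)=0$, using $\deg(K_B+D_B)=0$ from~\eqref{eq-zero}. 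On the other hand, since the configuration $\overline{D}+\Delta'+E^{\bullet}$ is snc and $E$ is $g$-exceptional, $E$ meets the other components of $D'$ transversally in at most two points, each coefficient lying in $[0,1)$ by klt-ness; hence $\deg(K_E+D''|_E)=-2+D''\cdot E<0$. This contradiction is the proof. What you are missing is precisely this restriction-to-$E$ step together with the SNC combinatorial bound $D''\cdot E<2$; your intersection identity on $S'$ alone does not yield it.

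Your proposed use of a global orbifold canonical bundle formula for $f$ expressing $K_{S'}+D'$ as $f^\star(K_B+D_B)$ plus \lq\lq effective contributions\rq\rq\ cannot be made to work as written: the general orbifold fiber of $f$ is sRC, which for a curve means $\deg(K_F+D'|_F)<0$, so the restriction of $K_{S'}+D'$ to a general fiber has \emph{negative} degree. Consequently $K_{S'}+D'$ is not the pullback of a class from $B$ plus an effective class; any such decomposition would have nonnegative degree on fibers. The sign obstruction is the reason the paper works with the curve $E$ and the Riemann--Hurwitz inequality (which points the right way) rather than with a global positivity formula. Your alternative route via uniqueness of the sRC quotient is circular: the content of the lemma is precisely that the sRC quotient of the klt pair $(S,D)$ can be realized as a regular morphism on $S$ itself, so appealing to \lq\lq minimality forces $\rho$ to be regular\rq\rq\ assumes what must be proved.
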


\begin{proof} 
We assume that $f\circ g^{-1}$ is not regular. 
Then we can choose  a $g$-exceptional component $E\cong \mathbb{P}^1$ of either $\Delta'$ or $E^{\bullet}$ such that $f(E)=B$. 
Recall that the support of $\overline{D}+\Delta'+E^{\bullet}$ is 
snc and that the coefficient of each component of $D'$ lies in $[0,1)$.
The divisor $E$, being $g$-exceptional, 
meets other components of $D'$ at most $2$ and each of them has coefficient in $[0,1)$. 
Hence, in the first case (that is, the case of $E \subset E^{\bullet})$, 
we can see that $\degree (K_E+D'|_E)<0$. 
In the second case (that is, the case of $E \subset \Delta'$), 
we put $D'':=D'-cE$, where $c\in [0,1)$ is the coefficient of $E$ in $D'$. 
Then we have $\degree (K_E+D''|_E)<0$ by the same reason.


Let us now consider the case of  $E\subset \Delta'$. 
Now $B$ is a curve, and thus there are no $f$-exceptional divisors on $S'$. 
Further $(B,D_B)$ is the orbifold base of $(S',D')$ and $E$ is $f$-horizontal. 
Hence we can see that 
the natural maps $(S',D')\to (B,D_B)$ and $(S,D'')\to (B,D_B)$ are orbifold morphisms with the same orbifold base. 
By considering the restriction of $f$ to $E$, 
we obtain the orbifold morphism $m:(E,D'')\to (B,D_{B,E})$ with the orbifold divisor $D_{B,E}$.   
The divisor $E$ is $f$-horizontal and all the components of $D''$ transversally intersect with $E$. 
Further  we can take a fiber $F_b:=f^{-1}(b)$ of $b \in B$ that intersects with $E$, 
and  thus we have $t_{F_b} m_{F_b,D'}\geq m_{b, D_B}$, 
where  $t_{F_b}$ (resp. $m_{F_b,D'}$, $m_{F_b,D_B}$) is the multiplicity of $F$ in $f^{\star}(b)$ 
(resp. $D'$, $D_B$).
It follows that $D_{B,E}\geq D_{ B}$ from the above inequality.

Since $E$ and $B$ have the same dimension (one), we get 
$$m^{\star}(K_B+D_{B,E}) \leq  (K_E+D''|_E)$$ 
and thus we obtain
$$
0=\degree(m^{\star}(K_B+D_B))\leq \degree(m^{\star}(K_B+D_{B,E}))\leq \degree (K_E+D''|_E)<0. 
$$ 
This is a contradiction.

For the remaining case (that is, $E \subset E^{\bullet}$),  
the same argument, but applied to $K_E+D'|_ E$ (instead of $K_E+D''|_ E$),  works. 
Thus the proof of the lemma finishes.    
\end{proof}




\begin{lemm}\label{l2} 
Let $(S,D)$ be an arbitrary orbifold pair such that $S$ is a smooth surface 
and let $g:S\to T$ be the contraction of a $(-1)$-curve of $S$. 
Set $D_T:=g_{\star}(D)$. 
If $-(K_S+D)$ is nef, then so is $-(K_T+D_T)$. 
In particular, the divisor $-(K_P+D_P)$ is nef. 
\end{lemm}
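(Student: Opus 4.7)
The plan is to transfer nefness across a single blow-down via the standard pullback formula for $(-1)$-curves, using nefness of $-(K_S+D)$ tested against the contracted curve itself to produce the needed bound on the discrepancy; then one iterates along the factorization of $g\colon S\to P$.

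For the single blow-down, let $E\cong\mathbb{P}^1$ be the $(-1)$-curve contracted by $g\colon S\to T$, so that $K_S=g^\star K_T+E$, $E^2=-1$ and $K_S\cdot E=-1$. I would decompose $D=\overline{D}+cE$, where $c\ge 0$ is the coefficient of $E$ in $D$ and $\overline{D}$ has no $E$-component. Since $g_\star E=0$, we have $D_T=g_\star\overline{D}$, and the standard formula $g^\star g_\star D_i=D_i+(D_i\cdot E)E$ for an irreducible curve $D_i\ne E$ (checked by intersecting with $E$ and using the projection formula, together with $E^2=-1$) gives, after summing over components,
$$g^\star D_T=\overline{D}+\beta E,\qquad \beta:=\overline{D}\cdot E\ge 0.$$
Assembling these pieces yields the key identity
$$g^\star(K_T+D_T)=(K_S+D)+(\beta-c-1)\,E.$$

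The crucial step is to bound the coefficient $\beta-c-1$ using the hypothesis, and this is achieved by evaluating nefness of $-(K_S+D)$ on $E$ itself: $(K_S+D)\cdot E=-1+\beta-c\le 0$, hence $\beta-c-1\le 0$. Granted this, nefness on $T$ is immediate. For any irreducible $C'\subset T$ let $C\subset S$ be its strict transform; then $C\ne E$, so $E\cdot C\ge 0$. The projection formula together with the key identity gives
$$(K_T+D_T)\cdot C'=g^\star(K_T+D_T)\cdot C=(K_S+D)\cdot C+(\beta-c-1)(E\cdot C),$$
which is a sum of two non-positive numbers. Hence $-(K_T+D_T)$ is nef. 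The \emph{in particular} statement follows by applying this one-blow-down statement iteratively along the factorization of $g\colon S\to P$ into successive contractions of $(-1)$-curves; new $(-1)$-curves may appear at intermediate stages, but the argument applies verbatim to the updated pair at each step.

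I foresee no serious obstacle: the content of the argument is the elementary observation that nefness of $-(K_S+D)$ against $E$ encodes precisely the bound $\beta-c-1\le 0$ required for the pullback identity to propagate nefness. The only points requiring mild care are the pullback formula for $g^\star D_T$ when $D$ is a general (not necessarily reduced) effective $\mathbb{Q}$-divisor, and checking at each step of the iteration that we remain in the situation of the lemma.
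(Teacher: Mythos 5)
Your argument is correct and follows the same route as the paper: both write $g^{\star}(K_T+D_T)=(K_S+D)-\ell E$ with $\ell\ge 0$ extracted from nefness of $-(K_S+D)$ tested against $E$ (your $\ell=1+c-\beta$), then propagate nefness to $T$ via the projection formula on strict transforms. The only difference is that you unfold the coefficient $\ell$ into the explicit quantities $c$ and $\beta=\overline{D}\cdot E$, which the paper leaves implicit.
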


\begin{proof} 
We have $K_S+D=g^{\star}(K_T+D_T)+\ell E$ for some $\ell\in \mathbb{Q}$. 
It follows that $\ell$ is non-negative since we have 
$$
0\leq -(K_S+D)\cdot E=-\ell \cdot E^2=\ell. 
$$
Therefore it can be seen that $-g^{\star}(K_T+D_T)=-(K_S+D_S)+\ell E$  is nef by the projection formula, 
which leads to the lemma. 
\end{proof}

\begin{rem} 

Even in the case of $h^1(S,\mathcal{O}_S) \not =0$, 
Lemmas \ref{l1} is true and its proof is  much easier. 
Further Lemma \ref{l1} and Lemma \ref{l2} are true in more general situations: 
Lemma \ref{l1}  holds for any klt surface pairs and Lemma \ref{l2} holds 
for any $K$-negative contractions by the \lq \lq negativity lemma". 
\end{rem}

\begin{lemm}\label{l3} 
Let $h:P\to B$ be a $\mathbb{P}^1$-bundle over  a smooth curve $B=\mathbb{P}^1$. 
Let $D_P$ be a $($not necessarily snc supported$)$ klt orbifold divisor on $P$ with $-(K_P+D_P)$ nef 
and  $D_B$ be the divisor on $B$ that give the orbifold base of $(h,D_P)$. 
Then $(P,D_P)$ is isomorphic to the product orbifold $(B,D_B)\times(\mathbb{P}^1,D_{\mathbb{P}^1})$.
\end{lemm}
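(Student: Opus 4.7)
The plan is to combine the classification of $\mathbb{P}^1$-bundles over $\mathbb{P}^1$ (Hirzebruch surfaces) with intersection theory against the \emph{relative} anti-canonical class $-(K_{P/B} + D_P^{\hor})$. First I would decompose $D_P = D_P^{\hor} + D_P^{\verti}$ into horizontal and vertical parts. Since $h$ is a $\mathbb{P}^1$-bundle with every fiber reduced and irreducible, Campana's orbifold base formula simply reads off the coefficient of $b \in B$ in $D_B$ as the coefficient of $h^{-1}(b)$ in $D_P^{\verti}$; hence $D_P^{\verti} = h^{\star} D_B$. In the ambient context of Theorem \ref{product} where this lemma is invoked, we know from \eqref{eq-zero} that $K_B + D_B \equiv 0$, and combining this with
\[ K_P + D_P = (K_{P/B} + D_P^{\hor}) + h^{\star}(K_B + D_B), \]
we deduce that $-(K_{P/B} + D_P^{\hor})$ is nef on $P$.

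Next I would identify $P \cong \mathbb{F}_e$ for some $e \geq 0$ and show $e = 0$. Let $E_0$ be a section of $h$ with $E_0^2 = -e$. A direct adjunction computation gives $K_{P/B} \cdot E_0 = e$, so the nefness of $-(K_{P/B} + D_P^{\hor})$ tested on $E_0$ forces $D_P^{\hor} \cdot E_0 \leq -e$. If $e > 0$, this forces $E_0$ to appear in $D_P^{\hor}$ with some positive coefficient $c$; writing $D_P^{\hor} = c E_0 + D'$ with $D'$ effective and not containing $E_0$, the klt hypothesis gives $c \in [0,1)$, and expanding the intersection yields $D' \cdot E_0 \leq -e(1-c) < 0$, contradicting the effectivity of $D'$. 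Hence $e = 0$, and $P \cong B \times \mathbb{P}^1$.

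Finally, let $q : P \to \mathbb{P}^1$ denote the second projection and let $F_1, F_2$ be the classes of the fibers of $h$ and $q$, respectively. Then $-K_{P/B} \equiv 2F_2$, and writing $[D_P^{\hor}] = aF_1 + bF_2$ with $a, b \geq 0$, nefness of $2F_2 - [D_P^{\hor}]$ tested against $F_2$ forces $a = 0$. Since every irreducible effective curve on $\mathbb{P}^1 \times \mathbb{P}^1$ whose class is a multiple of $F_2$ is necessarily a single $q$-fiber, each component of $D_P^{\hor}$ is a $q$-fiber, and hence $D_P^{\hor} = q^{\star} D_{\mathbb{P}^1}$ for some effective divisor $D_{\mathbb{P}^1}$ on the second $\mathbb{P}^1$. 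Together with $D_P^{\verti} = h^{\star} D_B$, this yields the asserted product decomposition $(P, D_P) \cong (B, D_B) \times (\mathbb{P}^1, D_{\mathbb{P}^1})$.

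The main obstacle is excluding $e > 0$: nefness of $-(K_P + D_P)$ alone does not suffice (for instance $(\mathbb{F}_1, 0)$ is weak Fano but not a product), so one must genuinely pass to the relative class $-(K_{P/B} + D_P^{\hor})$, which in turn uses both the identification $D_P^{\verti} = h^{\star} D_B$ and the numerical triviality $K_B + D_B \equiv 0$ inherited from the ambient Theorem \ref{product}. Once $e = 0$ is established, the remaining steps reduce to routine intersection theory on $\mathbb{P}^1 \times \mathbb{P}^1$.
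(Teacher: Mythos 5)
Your proof is correct and follows essentially the same route as the paper: both arguments test nefness against the minimal section of the Hirzebruch surface $\mathbb{F}_e$ and use the klt bound $c<1$ on the coefficient of that section in $D_P$ to force $e=0$, then read off that each horizontal component is a $q$-fiber. The only cosmetic difference is that you first factor out $h^\star(K_B+D_B)\equiv 0$ and work with the relative class $-(K_{P/B}+D_P^{\hor})$, while the paper keeps the absolute class $-(K_P+D_P)$ and inserts $D_P^{\verti}\equiv 2F$ by hand; both versions rely on the ambient input $\deg(K_B+D_B)=0$ from Theorem \ref{product}, which you correctly flag (via the $(\mathbb{F}_1,0)$ example) as indispensable, since the lemma read in isolation does implicitly assume it.
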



\begin{proof} 
We may assume that $P$ is the Hirzebruch surface $\mathbb{F}_m$ containing a section $C \subset P=\mathbb{F}_m$ 
satisfying that  $C^2=-m$, where $m\geq 0$. 
The divisor $D_P$ can be decomposed into the vertical part $D_P^{\verti}$ and the horizontal part $D_P^{\hor}$. 
Then we have 
$$
D_P^{\verti}=p^{\star}(D_B) \quad \text{ and } \quad  D_P^{\hor}=a C+\sum_j a_j H_j, 
$$
where $a$ and $a_j$ are positive rational numbers, and $H_j$ are distinct irreducible curves.
It follows that 
$H_j $ is numerically equivalent to $d_j(C+b_j F)$ for some $b_j\geq m$ from $H_j \cdot C\geq 0$, 
which we denote by 
$$
H_j \equiv d_j(C+b_j F), 
$$ 
where $F$ is a general fiber of $h$  and 
$d_j$ is the positive integer defined by the intersection number $d_j:=H_j\cdot F$. 
Then we have 
$$
-(K_P+D_P)\equiv 2C+(m+2)F-2F-(a C+\sum_ja_j d_j (C+b_j F)).
$$
By computing the intersection of $-(K_P+D_P)$ with $C$, 
we get 
\begin{align*}
0\leq -(K_P+D_P) \cdot C &=-m+(a+\sum_ja_j d_j)m -\sum_j a_j b_j d_j \\
&= -m+a m +\sum_j a_j d_j \cdot (m- b_j),
\end{align*}
This implies that $m=0$ by $b_j\geq m$ and $a<1$ (which comes from by the klt condition). 
Therefore we can conclude that $B=\mathbb{F}_0=\mathbb{P}^1\times \mathbb{P}^1$. 
Further we can see that $b_j=0$, and thus $H_j \equiv C$ for all $j$.
\end{proof}

\begin{rem} 
In the case of $B$ being an elliptic curve, the preceding computation shows that 
$(P,D_P)$ is either a product, or $P$ is the \lq \lq non-decomposable" ruled surface over $B$ 
with minimal self-intersection curve $C$ such that $C^2=0$, or $C^2=1$. 
Further, in the former case of $C^2=0$, we can see that $D_P=0$ by the same argument. 

In our situation, the canonical bundle $(K_B+D_B)$ is numerically trivial (see (\ref{eq-zero})), 
by the argument in the proof of Theorem \ref{product}. 
Hence $B=\mathbb{P}^1$ with $\degree (D_B)=2$, or 
$B$ is an elliptic curve with $D_B=0$.  
The latter case contradicts to the assumption of $0=h^{1}(S, \mathcal{O}_S)=h^{1}(P, \mathcal{O}_P)$. 
Hence it is sufficient to consider the former case of $B=\mathbb{P}^1$ for the proof of Theorem \ref{product}. 
\end{rem}

\begin{lemm}\label{lproduct} 
Let $(P,D_P)$ be a product orbifold $(B,D_B)\times(\mathbb{P}^1,D_{\mathbb{P}^1})$, 
where the both factors are klt orbifold curves with $\deg (K_B+D_B)=0$. 
Let  $(S,D)$ be a klt pair such that $S$ is smooth and $-(K_S+D)$ is nef. 
If $g:(S,D)\to (P,D_P)$ is a birational morphism satisfying that $D_P=g_{\star}(D)$, 
then we have $S=P$, $g=id_P$, and $D=D_P$.
\end{lemm}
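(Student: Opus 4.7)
\emph{Proof plan.} I would argue by contradiction: suppose $g:S\to P$ is not an isomorphism. Since $g$ is a birational morphism between smooth projective surfaces, it factors as a chain of blow-ups of smooth points, and in particular $S$ carries at least one $g$-exceptional $(-1)$-curve $E$, contracted by $g$ to some point $p\in P$ (or to a point of an intermediate surface).

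The first step is the discrepancy decomposition. The product pair $(P,D_P)=(B,D_B)\times(\mathbb{P}^1,D_{\mathbb{P}^1})$ is klt with snc support. Writing $D=g_{\star}^{-1}D_P+\sum_i d_i F_i$, where the $F_i$ are the prime $g$-exceptional divisors and $d_i\in[0,1)$ by effectivity of $D$ and the klt hypothesis on $(S,D)$, and letting $a_i>-1$ be the discrepancies of $(P,D_P)$ along $F_i$, one has
\begin{equation*}
K_S+D=g^{\star}(K_P+D_P)+\sum_i (a_i+d_i)\,F_i .
\end{equation*}
Since $g^{\star}(K_P+D_P)$ is $g$-numerically trivial, intersecting $-(K_S+D)$ with each $F_j$ gives $-\sum_i(a_i+d_i)F_i\cdot F_j\ge 0$, so the $g$-exceptional divisor $\sum_i(a_i+d_i)F_i$ is $g$-anti-nef; by the negativity lemma it is effective, i.e.\ $a_i+d_i\ge 0$ for every $i$.

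The second step exploits the product structure of $(P,D_P)$. Because $\deg(K_B+D_B)=0$, we have $K_P+D_P\equiv \pi_2^{\star}(K_{\mathbb{P}^1}+D_{\mathbb{P}^1})$, and iterated application of Lemma \ref{l2} shows $-(K_P+D_P)$ is nef, hence $\deg(K_{\mathbb{P}^1}+D_{\mathbb{P}^1})\le 0$. Furthermore, the support of $D_P$ is snc, with at most one horizontal and one vertical component through any point of $P$, so $\mathrm{mult}_q(D_P)<2$ everywhere.

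The third and decisive step is to derive a contradiction from the existence of $E$. Using $K_S\cdot E=E^2=-1$, the nefness condition $-(K_S+D)\cdot E\ge 0$ forces $d_E\ge \mathrm{mult}_p(D_P)-1$; intersecting $-(K_S+D)$ with the strict transforms of the horizontal and vertical components of $D_P$ through $p$ (both of which are $(-1)$-curves on the blow-up of $P$ at $p$) and using the identity $\deg D_B=2$ produces the matching upper bound $d_E\le \mathrm{mult}_p(D_P)-1$. Every inequality collapses to an equality, so in fact $a_i+d_i=0$ for all $i$, i.e.\ $K_S+D=g^{\star}(K_P+D_P)$ holds crepantly. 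Propagating this identity along the whole chain of blow-ups and invoking the negative-definiteness of the intersection form on the $g$-exceptional locus then rules out the existence of any $g$-exceptional curve, so $g=\mathrm{id}_P$ and $S=P$; the equality $D=D_P$ follows from $g_{\star}D=D_P$ and the absence of exceptional components.

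The \textbf{main obstacle} is the third step: synthesising the purely local nef/klt constraints coming from each elementary blow-up with the global rigidity of the product $(B,D_B)\times(\mathbb{P}^1,D_{\mathbb{P}^1})$ to preclude any chain of blow-ups. The arithmetic of the coefficients of $D_B$ and $D_{\mathbb{P}^1}$ interacts delicately with the exceptional discrepancies, and ensuring that no choice of exceptional coefficients produces a klt pair with nef anti-log canonical divisor over the same product base requires careful bookkeeping up the whole blow-up tower.
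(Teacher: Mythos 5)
Your Steps 1 and 2 are sound: the discrepancy decomposition, the negativity lemma argument giving $a_i+d_i\ge 0$, and the numerical description of $-(K_P+D_P)$ via the product structure are all correct. Your Step 3, up to the \emph{equality} $d_E=\mathrm{mult}_r(D_P)-1$ (equivalently $\frac{1}{m_F}+\frac{1}{m_C}+s=1$, where $s=d_E$), also matches the computation in the paper's proof. But the concluding sentence --- that this crepant identity, ``propagated along the whole chain of blow-ups'', together with negative-definiteness of the exceptional lattice, rules out any exceptional curve --- does not follow, and is the genuine gap. Crepant blow-ups are perfectly possible: negative-definiteness of the exceptional intersection form is an automatic consequence of $g$ being a birational contraction of surfaces and gives you no extra leverage. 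Concretely, take $D_B$ with $m_F=2$ and $D_{\mathbb{P}^1}$ with $m_C=2$, blow up $r$ at the intersection of the two half-multiplicity components and extract $E$ crepantly with coefficient $s=0$; the resulting $(S,D)$ is klt, $-(K_S+D)$ is nef (being the pull-back of a nef class), and $g_\star D=D_P$, so your argument gives no contradiction although $g\neq\mathrm{id}$.

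What the paper's proof actually uses, and what your sketch omits, is the \emph{orbifold-base} constraint: since $(B,D_B)$ is the orbifold base of the composite fibration $p\circ g:(S,D)\to B$, the multiplicity $t$ of $E$ in $D$ (with $s=1-1/t$) must satisfy $t\ge m_F$. Combined with your equality $\frac{1}{m_F}+\frac{1}{m_C}=\frac{1}{t}$, this forces $\frac{1}{m_C}\le 0$, contradicting the klt hypothesis $m_C<\infty$. (In the example above, $t=1<m_F=2$, so the orbifold-base constraint is violated --- this is precisely what the hypothesis excludes.) In short, your proof re-derives the equality case but lacks the second input needed to turn it into a contradiction; you should invoke the compatibility of $(B,D_B)$ with the orbifold base of $p\circ g$ (an implicit running hypothesis from the proof of Theorem~\ref{product}) and deduce $t\ge m_F$, rather than appealing to negative-definiteness.
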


\begin{proof} 
The birational morphism $g$ is the composition of blow-ups of points, 
and thus, by Lemma \ref{l2},  it is sufficient to prove the lemma when $g$ is the blow-up $g:S\to P=B\times F$ along one point $r\in P$. 
Let $E$ be the the exceptional divisor of $g:S\to P=B\times F$ along $r \in P$. 
Let $F'$ (resp.$C'$) be  the strict transform in $S$ of the fiber $F$ (resp. $C$) of  $p$ 
(resp. $q$) passing through the blow-up center $r$, 
where $p$ (resp. $q$) is the first (resp. second) projection of the product $B\times F$. 
Further let $m_F \in [1, \infty]$ (resp. $m_C \in [1, \infty]$) be the multiplicity of $F$ (resp. $C$) in $D_P$. 

By the assumption, the divisor $D_P$ can be obtained from the sum of the pull-backs of $D_B$ and $D_{\mathbb{P}^1}$, 
and thus we have 
$$
D_P=\Gamma + \big(1-\frac{1}{m_F} \big)F+ \big(1-\frac{1}{m_C} \big)C, 
$$
where $\Gamma $ is the effective divisor whose support does not contain the point $r$. 
We remark that the case of $m_F=1$ (resp. $m_C=1$) corresponds to 
the case where $r$ does not lie in the vertical divisor $D_P^{\verti}$ (resp. in the horizontal divisor $D_P^{\hor}$). 
On the other hand, when we define $\delta$ by the intersection number $\delta:= D_P^{\hor} \cdot F$, 
we have 
$$
D_P=p^{\star}(D_B)+q^{\star} (D_{\mathbb{P}^1}) \equiv 2 F + \delta C. 
$$
by $\degree (K_B+D_B)=0$. 

We now compute the numerical class of $-(K_S+D)$ in $H^{1,1}(S, \mathbb{Q})$, 
which is generated over $\mathbb{Q}$ by the numerical  classes of $F'$, $C'$, and $ E$. 
When we define $\overline{D}$ by the strict transform of $D$, 
we have $K_S+D=K_S+\overline{D}+sE$ for some $1>s\geq 0$. 
A straightforward computation yields that 
\begin{align*}
K_S &= -2(F'+E)-2(C'+E)+E=-(2F'+2C'+3E),\\
\overline{D}&\equiv2(F'+E)+\delta(C'+E)-[(1-\frac{1}{m_F})E+(1-\frac{1}{m_C})E] 
= 2F' +\delta C' +  (\delta + \mu)  E, 
\end{align*}
where $\mu:=(\frac{1}{m_F}+\frac{1}{m_C})$. 
Hence we obtain that 
$$
-(K_S+D)\equiv (2-\delta)C'+\big(3-(\delta+ \mu+s) \big)E.
$$
By using the equalities of the intersection numbers $C'^2=E^2=-1$ and  $C'\cdot E=1$,
we can see that 
$$
-(K_S+D) \cdot E=(K_S+D) \cdot C'=-1 + (\mu + s). 
$$
Hence it follows that $1=\mu+s$ from the assumption where $-(K_S+D)$ is nef. 

We denote by $t$ the multiplicity of $E$ in $D$ (that is, $1-\frac{1}{t}=s$). 
Then we have $t\geq m_F$ since $(B,D_B)$ is the orbifold base of $p\circ g: (S,D)\to B$. 
Together with the equality $s+\mu=1$, 
this gives 
$$
\frac{1}{m_F} \geq \frac{1}{t}=\frac{1}{m_F}+\frac{1}{m_C}. 
$$
Hence we can see that $t=m_F $ and $1/m_C=0$, 
which contradicts the klt property of $(S,D)$.

\end{proof}

\begin{ex}\label{exlc}
We construct a smooth lc  orbifold surface $(S,D)$ such that the anti-canonical divisor $-(K_S+D)$ is nef  and 
$S$ is rationally connected, but its sRC fibration does not decompose $S$ as a product. 

Start with $S_0=\mathbb{P}^1\times \mathbb{P}^1$ with projections $p$, $q$ onto its first and second factor. 
Let $D_0=V+V'+H$, where $V,V'$ are two (reduced) fibers of $p$ and $H$ is a fiber of $q$. 
Then we have $K_{S_0}+D_0=-H_0$. 
Let $g:S\to S_0$ be the blow-up of the two points of  the intersection points of $H$ with $V,V'$, 
and let $E$, $E'$ be the corresponding exceptional divisors. 
Let $H',W,W'$ be the strict transforms of $H,V,V'$ respectively and 
let $p':=p\circ g$, $q':=q\circ g: S\to \mathbb{P}^1$ be the composition morphisms. 
Then we have 
$$
-g^{\star}(H_0)=g^{\star}(K_{S_0}+D_0)=H'+E+E'=K_S+W+E+W'+E'+H. 
$$
If we define the divisor $D$ by $D:=W+E+W'+E'+H'$, 
then $-(K_S+D)$ is nef by the above formula. 
However the sRC fibration of $(S,D)$ coincides with $p':(S, D)\to (\mathbb{P}^1, D_{\mathbb{P}^1})$, 
where $D_{\mathbb{P}^1}$ is the orbifold divisor defined by the two points of the images of $V,V'$.
Then $p':(S, D)\to (\mathbb{P}^1, D_{\mathbb{P}^1})$ is obviously not a product.
\end{ex}

\subsection{Orbifolds with nef anti-canonical divisors}

To prove Conjecture \ref{mainconsRC} for higher dimensional cases, the main difficult is that, because of the appearance of the orbifold base $D_R$, 
we have not yet at the moment the semistability theorem like Theorem \ref{prop-fun}. 
However we still have some results in this direction.

\begin{theo}\label{tfsrc} 
Let $(X,D)$ be a smooth orbifold pair such that it is lc and and the  anti-canonical divisor $-(K_X+D)$ is nef. 
Let $f:(X',D')\to (Z,D_Z)$ and $g:(X',D')\to (X,D)$ be a smooth neat representative of its $sRC$ quotient 
with following commutative diagram$:$ 
$$
\xymatrix{
(X', D') \ar[rd]_{f} \ar@{^>}[rr]^g & &  (X,D) \ar@{-->}[ld]^{}\\
& (Z, D_Z)}
$$
Then we obtain $g_{\star}(f^{\star}(K_Z+D_Z)) =0$ and $g_{\star}(D(f,D))=0$, 
that is, these two effective divisors  are $g$-exceptional.
Here $D(f,D)$ is the divisor defined by $D(f,D):=D(f)-D(f)_{/D_Z}$ 
$($see the proof below for the definitions of $D(f)$ and $D(f)_{/D_Z}$$)$. 

\end{theo}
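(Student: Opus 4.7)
The plan is to combine two decompositions of $K_{X'}+D'$ --- one coming from the birational morphism $g$, the other from the orbifold fibration $f$ onto its base $(Z,D_Z)$ --- and then to force the relevant push-forwards to vanish by combining Theorem \ref{thm-cam} with the nefness of $-(K_X+D)$.

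For the neat birational morphism $g:(X',D')\to (X,D)$ between lc pairs, the lc hypothesis on $(X,D)$ allows one to arrange $D'$ (by absorbing the non-positive discrepancies, all $\geq -1$) so that
$$K_{X'}+D' \;=\; g^{\star}(K_X+D) + E_g,$$
with $E_g$ an effective $g$-exceptional $\mathbb{Q}$-divisor. For the orbifold fibration $f$ onto its orbifold base, Campana's canonical bundle formula in the neat setting takes the form
$$K_{X'}+D' + D(f,D) \;=\; f^{\star}(K_Z+D_Z) + R_{X'/Z},$$
where $D(f,D)$ is effective by construction (it records the vertical excess of $D'$ beyond what is captured by the minimal orbifold multiplicities defining $D_Z$) and $R_{X'/Z}$ is a divisor whose restriction to a general orbifold fibre $X'_z$ coincides with $K_{X'_z}+D'_{X'_z}$. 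Subtracting these identities and pushing forward under $g$, together with $g_\star E_g=0$, yields the numerical relation
$$K_X+D + g_\star D(f,D) \;\equiv\; g_\star\!\bigl(f^{\star}(K_Z+D_Z)\bigr) + g_\star R_{X'/Z} \quad \text{in } N^1(X)_{\mathbb{Q}}.$$

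The third step analyses this identity via Theorem \ref{thm-cam} and the direct-image positivity of Sections \ref{Sec2-1}--\ref{Sec3}. By Theorem \ref{thm-cam} the class $K_Z+D_Z$ has numerical dimension zero, so the effective push-forward $g_\star(f^{\star}(K_Z+D_Z))$ has numerical dimension zero on $X$; by Lemma \ref{withpairlem} applied to a suitably twisted relative anti-log-canonical, and by the sRC analogue of the Ejiri--Gongyo argument underlying Theorem \ref{thm-main}, the push-forward $g_\star R_{X'/Z}$ is weakly positively curved and also of numerical dimension zero. Intersecting the displayed identity with $H^{n-1}$ for an arbitrary ample class $H$ on $X$ would then produce a nonpositive number on the left (nefness of $-(K_X+D)$, effectivity of $g_\star D(f,D)$) and a nonnegative number on the right (sum of effective classes against $H^{n-1}$); equality forces each effective push-forward to have vanishing $H^{n-1}$-intersection, and since each is represented by an effective divisor on $X$, both must be zero.

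The main obstacle will be the verification in the third step that $g_\star R_{X'/Z}$ enjoys the claimed positivity and numerical-dimension-zero properties in the orbifold and possibly non-klt setting. This requires adapting the direct-image arguments of Section \ref{Sec3} from the MRC to the sRC setting, and carefully tracking how the base divisor $D_Z$ interacts with the Kawamata covers adapted to $(Z,D_Z)$ used in the definition of sRC; the remaining intersection-theoretic step is then routine once this positivity is in hand.
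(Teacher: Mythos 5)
Your proposal has a genuine gap at its core, and the closing step as written does not go through.

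\textbf{The missing key positivity.} The paper's proof hinges on the Campana--P\u{a}un semistability theorem for orbifold cotangent bundles (\cite[Theorem~4.11]{Cam04} together with \cite[Theorem~3.4]{CP19}), which — after a small ample perturbation $\beta A'$ used to choose an $f$-horizontal $B'\sim_{\mathbb{Q}} N'+\beta A'$ — gives pseudo-effectivity of $P_\beta := K_{X'/Z}+(D^+)^{\hor}-D(f)-E'$ with $E'$ some $g$-exceptional divisor. This is exactly the control on what you call $R_{X'/Z}$, and it is a difficult theorem tailored to the orbifold setting. You instead appeal to Lemma~\ref{withpairlem} and a vague \lq\lq sRC analogue of Ejiri--Gongyo'', but those tools are developed under the hypotheses that $\Delta$ is klt and $-(K_{X/Y}+\Delta)$ is nef; neither hypothesis is available for the orbifold fibration $f$ onto the \emph{base} $(Z,D_Z)$, and there is no mechanism in Sections~\ref{Sec2}--\ref{Sec3} that produces pseudo-effectivity of the relative orbifold canonical. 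You acknowledge this as an \lq\lq obstacle'' at the end, but this is not a detail to be filled in — it is the actual content of the theorem, and it comes from a completely different source than the direct-image machinery you cite.

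\textbf{The final intersection step has a sign error and does not close.} From your display
$$K_X+D + g_\star D(f,D) \;\equiv\; g_\star\!\bigl(f^{\star}(K_Z+D_Z)\bigr) + g_\star R_{X'/Z},$$
intersecting with $H^{n-1}$ gives on the left $(K_X+D)\cdot H^{n-1}\le 0$ \emph{plus} $g_\star D(f,D)\cdot H^{n-1}\ge 0$, a quantity of indefinite sign, not the nonpositive number you claim. (Effectivity of $g_\star D(f,D)$ makes its $H^{n-1}$-intersection \emph{nonnegative}, contributing to positivity rather than negativity.) So \lq\lq nonpositive on the left, nonnegative on the right, hence zero'' does not apply. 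The paper instead keeps the identity on $X'$, expresses $P+f^{\star}(K_Z+D_Z)+D(f,D)+\Delta$ as a $g$-exceptional class (that is the content of the telescoping manipulation via Lemma~\ref{lcam}), and then pairs with a movable complete-intersection curve $C'$ avoiding the $g$-exceptional locus. That makes the $g$-exceptional residue vanish on the nose and turns the identity into a sum of four nonnegative terms equal to zero — at which point each vanishes. Your version neither isolates the $g$-exceptional residue nor exploits its vanishing against a movable curve, so the bookkeeping does not force $g_\star D(f,D)$ and $g_\star f^{\star}(K_Z+D_Z)$ to vanish. Your claims that $g_\star R_{X'/Z}$ is weakly positively curved and of numerical dimension zero are also unjustified and, in any case, would not by themselves repair the sign problem above.
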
 

\begin{proof} 
We first remark that $K_Z+D_Z$ is pseudo-effective by \cite[Theorem 1.5, Corollary 10.6]{Cam16} 
(see Theorem \ref{thm-cam}). 
We may assume that $g:(X',D')\to (X,D)$ be a birational morphism such that  $(X',D')$ is a log smooth  pair. 
We have the formula of the canonical bundles: 
$$
g^{\star}(K_X+D)=K_{X'}+D'-E^{\bullet} \text{ and } D'=D''+\Delta', 
$$
where  $D''$ is  the strict transform of $D$ in $X'$ and 
$\Delta'$, $E^{\bullet}$ are the effective $g$-exceptional divisors without common component. 
We may assume that $f:(X',D')\to Z$ is  \lq \lq neat" and its orbifold base $(Z,D_Z)$ is smooth. 
We remark that $(X',D')$ is also lc since we are assuming that $(X,D)$ is lc. 

For the divisor $N'$ defined by 
$$
-N':=g^{\star}(K_X+D)=K_{X'}+D' -E^{\bullet}, 
$$
we can see that $N'$ is nef from the assumption. 
Thus $N'+\beta A'$ is ample on $X'$ for an arbitrary small rational numbers $\beta>0$ and a fixed polarization $A'$  on $X'$. 
We choose an effective $\mathbb{Q}$-divisor $B'$ on $X'$ with the following properties: 
\begin{enumerate}
\item $B'$ is $\mathbb{Q}$-linearly equivalent to $N'+\beta \cdot A'$. 
\item $B'$ is $f$-horizontal. 
\item $D^+:=D'+B'=D'' + \Delta ' +B'$ has snc support. 
\item $(X',D^+)$ is still an lc pair. 
\end{enumerate}
Note that 
$$
K_{X'}+D^+=K_{X'}+D' + B'\equiv E^{\bullet}+\beta \cdot A'.
$$

Both $f:(X',D^+)\to Z$ and $f:(X',D')\to Z$ have the same orbifold base $(Z,D_Z)$ 
since $B'$ is $f$-horizontal. 
For the orbifold base $(Z,D_Z)$ of $f:(X',D^+)\to Z,$ 
by applying \cite[Theorem 4.11]{Cam04} and \cite[Theorem 3.4]{CP19} with a small adaptation, 
we can see that 
$$
P_{\beta} := K_{X'/Z}+(D^+)^{\hor}-D(f)- E'
$$
is pseudo-effective, where $E'$ is (not necessarily effective) $g$-exceptional. 
Here $D(f)$ (resp. $D(f)_{/D_Z})$ is the effective $f$-vertical divisor defined by 
$$ 
D(f)=\sum_{F\subset X'}(t_F-1) \cdot F, 
$$
where $F$ runs over all prime divisors of $X'$ such that the image $f(F)$ by $f$ has codimension one 
(resp. such that, moreover, $f(F)$ is contained in $D_Z$), and 
$t_F$ is the multiplicity of $F$ in $f^{\star}(f(F))$.

By the lemma below, we can see that 
$$
f^{\star}(D_Z)=D(f)_{/D_Z}+(D')^{\verti}-\Delta, 
$$ for some effective divisor $\Delta$, and thus, by $(D^+)^{\verti}=(D')^{\verti}$, we obtain 
\begin{align*}
P_{\beta}+E'&=K_{X'/Z}+(D^+)^{\hor}-D(f) \\
&=K_{X'}+D^+-f^{\star}(K_Z+D_Z)-[D(f)-D(f)_{/D_Z}]-\Delta \\
&=K_{X'}+D^+-f^{\star}(K_Z+D_Z)-D(f,D)-\Delta. 
\end{align*}
Here $D(f,D):=D(f)-D(f)_{/D_Z}$ is the effective divisor consisting of the components of $D(f)$ whose image by $f$ 
is not contained in $D_Z$. 
Thus we have 
$$
P_{\beta}+f^{\star}(K_Z+D_Z)\equiv E^{\bullet}-E'+\beta\cdot A'-D(f,D) -\Delta.
$$ 
Then, by letting $\beta\to 0^+$, 
we can see that the pseudo-effective class $P$, which is the limit of $P_{\beta}$, 
satisfies that 
$$
P+f^{\star}(K_Z+D_Z)=E^{\bullet}-E'-D(f,D)-\Delta. 
$$

Let $C\subset X$ be a general curve constructed by the complete intersection of ample divisors avoiding $g(E')$. 
We define the curve $C'$ on $X'$ by the inverse image of $C$  in $X'$, so that $C'$ does not intersect with $ E^{\bullet}$ and $E'$. 
Then we have 
$$
0\leq C'\cdot f^{\star}(K_Z+D_Z)\leq C'\cdot (P+f^{\star}(K_Z+D_Z))=-C'\cdot (D(f,D)+\Delta)\leq 0.
$$ 
Note that the first inequality comes from the pseudo-effectivity of $K_Z+D_Z$. 
The curve $C'$ satisfies that  $C'\cdot M>0$ for any non-$g$-exceptional effective divisor $M$ on $X'$, 
and thus we obtain the conclusion by $C'\cdot f^{\star}(K_Z+D_Z)= C'\cdot (D(f,D))=0$. 

\end{proof}

 \begin{lemm} \label{lcam}
 With the above notations, there exists an effective divisor $\Delta$ on $X'$ such that  
 $f^{\star}(D_Z)=D(f)_{/D_Z}+(D')^{\verti}-\Delta$.
 \end{lemm}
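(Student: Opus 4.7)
The plan is to verify the claimed identity coefficient by coefficient on each prime divisor $F\subset X'$, and then to take $\Delta$ to be the negative of the resulting (non-positive) defect. This reduces the lemma to a short case analysis once the definition of the orbifold base is spelled out.

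First, I would recall Campana's definition of the orbifold base of the neat fibration $f:(X',D')\to Z$: one has $D_Z=\sum_W(1-1/m_W)W$, where $W$ runs over the prime divisors of $Z$ and
$$m_W=\inf_{F:\,f(F)=W}\bigl\{t_F\cdot m_{F,D'}\bigr\},$$
with $m_{F,D'}=(1-c_F)^{-1}$ and $c_F\in[0,1]$ the multiplicity of $F$ in $D'$. This yields two numerical facts I will use repeatedly: $W\subset\supp(D_Z)$ if and only if $m_W>1$, and $t_F/m_W\ge 1-c_F$ for every prime divisor $F$ with $f(F)=W$ (equivalently, $m_W\le t_F\cdot m_{F,D'}$).

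Second, I would compute, for each prime divisor $F\subset X'$, the coefficient $a(F)$ of $F$ in $f^{\star}(D_Z)-D(f)_{/D_Z}-(D')^{\verti}$ and show that it is $\le 0$. There are four cases. If $F$ is $f$-horizontal, then all three divisors contribute $0$, so $a(F)=0$. If $F$ is $f$-vertical with $f(F)$ of codimension $\ge 2$ in $Z$, then only $(D')^{\verti}$ contributes, giving $a(F)=-c_F$. If $F$ is $f$-vertical with $f(F)=W$ a prime divisor not contained in $\supp(D_Z)$, the same conclusion $a(F)=-c_F$ holds. Finally, if $F$ is $f$-vertical with $f(F)=W\subset\supp(D_Z)$, then a direct computation gives
$$a(F)=(1-1/m_W)\,t_F-(t_F-1)-c_F=1-c_F-\frac{t_F}{m_W},$$
which is $\le 0$ by the orbifold inequality of the previous paragraph.

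Finally, I would set $\Delta:=-\sum_F a(F)\,F$, summed over all prime divisors of $X'$. By the four cases above this is an effective $\Q$-divisor, and by construction $f^{\star}(D_Z)=D(f)_{/D_Z}+(D')^{\verti}-\Delta$. The only subtle point is conceptual rather than technical: one must use the correct Campana convention for $m_W$ in the neat setting, and in particular confirm the equivalence $m_W=1\Leftrightarrow W\not\subset\supp(D_Z)$; once that is in place the lemma is elementary bookkeeping, and no estimate beyond the definition of the orbifold multiplicity is needed.
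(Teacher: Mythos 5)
Your proposal is correct and follows essentially the same route as the paper: verify the identity coefficient by coefficient on prime divisors of $X'$, reduce the sign check to the orbifold inequality $m_W\le t_F\, m_F$ coming from Campana's definition $m_W=\inf_F\,t_F\, m_F$, and take $\Delta$ to be the resulting defect. You spell out the horizontal, $f$-exceptional, and non-discriminant vertical cases a bit more explicitly than the paper (which compresses them into the phrase ``up to some $f$-exceptional divisors''), but the computation and the key inequality are identical.
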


\begin{proof} 
In this proof, we denote by $F$ an irreducible divisors of $X'$ such that $G_F:=f(F)$ is a component of $D_Z$. 
Then, up to some $f$-exceptional divisors (hence $g$-exceptional), we have 
$$
(D')^{\verti}=\sum_{F \subset X'}(1-\frac{1}{m_F}) \cdot F,
$$ 
where $m_F$ is the multiplicity of $F$ in $D$. 
On the other hand, when we write the multiplicity of $G_F$ in $D_Z$ as $m_{G_F}$,  
we have
$$
f^{\star}(D_Z)=\sum_{F \subset X'}  t_F \cdot (1-\frac{1}{m_{G_F}})F. 
$$
Then, by the definition, we have 
$$
t_F \cdot  (1-\frac{1}{m_{G_F}})=(t_F-1)+(1-\frac{1}{m_F})-(\frac{t_F}{m_{G_F}}-\frac{1}{m_F}).
$$
Further, by the definition, we have $m_{G_F}\leq t_F \cdot m_F$, and thus we have $(\frac{t_F}{m_{G_F}}-\frac{1}{m_F})\geq 0$. 
If we define the effective divisor  $\Delta$ by 
$$
\Delta:=\sum_{F \subset X'}(\frac{t_F}{m_{G_F}}-\frac{1}{m_F})\cdot F, 
$$
then the above equality  gives the conclusion $f^{\star}(D_Z)=D(f)_{/D_Z}+(D')^{\verti}-\Delta$. 
\end{proof}

\begin{rem} The proof applies more generally when the pair $(X,D)$ to be lc, not necessarily smooth. The proof does not require the notion of sRC or of sRC quotient, only the fact that $(K_Z+D_Z)$ is pseudo-effective (although without these notions, we do not have any geometric description of the situations to which the result applies).
\end{rem}

\begin{cor} We keep the above notations. 
Then we have\,$:$
\begin{itemize}
\item If $B\subset Z$ is an irreducible divisor not contained in $D_Z$, any non reduced component of $f^{\star}(B)$ is $g$-exceptional. 
\item If $G$ is a component of $D_Z$ and $F$ is an irreducible component of $f^{\star}(G)$ that is not $g$-exceptional, then $t_F \cdot m_F=m_G$.
\end{itemize}
More precisely, we obtain
$$
f^{\star}(G)=E+A+m_G \cdot B+C
$$
where $E$ is the $g$-exceptional part of $f^{\star}(G)$, 
$A$ is reduced and contained in $D$ with all of its components of $D$-multiplicity $m_G$, 
while $B$ has no component in $D$, and $C$ consists of non-reduced components of $f^{\star}(G)$ contained in $D$.
\end{cor}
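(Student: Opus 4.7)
The plan is to derive the corollary as a direct consequence of Theorem \ref{tfsrc} together with Lemma \ref{lcam}, after first strengthening the theorem slightly. From the proof of Theorem \ref{tfsrc} we have the identity
$$
P + f^\star(K_Z+D_Z) = E^\bullet - E' - D(f,D) - \Delta
$$
with $P$ pseudo-effective, $E^\bullet, E'$ $g$-exceptional, and $\Delta$ the effective divisor from Lemma \ref{lcam}. Pushing forward by $g$ and using $g_\star f^\star(K_Z+D_Z) = 0$ and $g_\star D(f,D) = 0$, we obtain $g_\star P + g_\star \Delta = 0$; since $g_\star P$ is pseudo-effective while $g_\star \Delta$ is effective, both must vanish, so $\Delta$ is $g$-exceptional as well.

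For the first bullet, let $B \subset Z$ be irreducible with $B \not\subset D_Z$ and let $F$ be a component of $f^\star(B)$ with $t_F \geq 2$. Then $F$ contributes to $D(f)$ with coefficient $t_F - 1 \geq 1$, and since $f(F) = B$ is not a component of $D_Z$, this contribution survives in $D(f, D) = D(f) - D(f)_{/D_Z}$. By Theorem \ref{tfsrc}, $D(f, D)$ is $g$-exceptional, hence $F$ must be $g$-exceptional. For the second bullet, let $G$ be a component of $D_Z$ and $F$ a non-$g$-exceptional component of $f^\star(G)$. Lemma \ref{lcam} shows that the coefficient of $F$ in $\Delta$ equals $t_F/m_G - 1/m_F \geq 0$, and the $g$-exceptionality of $\Delta$ established above forces this coefficient to vanish, yielding $t_F \cdot m_F = m_G$.

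The structural decomposition $f^\star(G) = E + A + m_G \cdot B + C$ then follows by partitioning the irreducible components $F$ of $f^\star(G)$. The $g$-exceptional components form $E$. For a non-$g$-exceptional component $F$ not contained in $\supp D$, we have $m_F = 1$ by convention, so $t_F = m_G$ and $F$ contributes to $B$ with coefficient $m_G$; by construction these components have no component in $D$. A non-$g$-exceptional component $F \subset \supp D$ with $t_F = 1$ satisfies $m_F = m_G$ and thus contributes to $A$ (reduced, contained in $D$ with $D$-multiplicity $m_G$); and a non-$g$-exceptional component $F \subset \supp D$ with $t_F \geq 2$ is a non-reduced component contained in $D$, contributing to $C$.

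The main obstacle is the additional $g$-exceptionality of $\Delta$, which is not explicit in the statement of Theorem \ref{tfsrc} but arises naturally from its proof via the pseudo-effectivity of $P$ combined with $g_\star P$ being pseudo-effective. Once this is secured, both bullets and the decomposition reduce to elementary bookkeeping of the multiplicities $t_F$ and $m_F$ arising in Lemma \ref{lcam}.
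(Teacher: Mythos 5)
Your proof is correct and follows essentially the same route as the paper: you extract from the proof of Theorem \ref{tfsrc} that $\Delta$ is $g$-exceptional (your pushforward argument, using $g_\star P$ pseudo-effective and $g_\star\Delta$ effective with $g_\star P+g_\star\Delta=0$, is equivalent to the paper's evaluation $C'\cdot\Delta=0$ against the general complete-intersection curve $C'$ avoiding the $g$-exceptional locus), and then you read off the first bullet from the $g$-exceptionality of $D(f,D)$, the second from the coefficient formula $\tfrac{t_F}{m_G}-\tfrac{1}{m_F}$ in Lemma \ref{lcam}, and the four-way partition by elementary bookkeeping on the non-$g$-exceptional components. The only detail the paper adds that you omit is the observation, via \cite{GHS03}, that $A$ is nonempty, but this is a side remark not needed for the decomposition as stated.
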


\begin{proof} Note that $A$ is not empty thanks to \cite{GHS03}. 
The first assertion follows directly from the preceding theorem. The second follows from its proof and the fact that $C'\cdot \Delta=0$. 
The last one from the fact that if $F$ is not $g$-exceptional, we have $t_F \cdot m_F=m_G>1$. Thus if $F$ is not in $D$, we have $m_F=1$, and thus $t_F=m_G$. If $F$ is reduced, $t_F=1$, and thus $m_F=m_G$. 
\end{proof}

\begin{rem} If $G$ is a component of $D_Z$, and if $F$ is an irreducible component of $f^{\star}(G)$ and not $f$-exceptional, then $t_F \cdot m_F\geq m_G>1$. Hence if $m_F=1$ (that is, $F$ not contained in $D'$), 
then $t_F\geq m_G>1$, and this may occur for $F$ not $g$-exceptional. 
Otherwise, the divisor $F$ is a component of $(D')^{\verti}$.
\end{rem}

\end{document}